\newcommand{\N}{\mathbb{N}}
\newcommand{\R}{\mathbb{R}}
\renewcommand{\P}{\mathbb{P}}
\newcommand{\E}{\mathbb{E}}
\theoremstyle{plain}
\newtheorem{theorem}{Theorem}[section]
\newtheorem{corollary}[theorem]{Corollary}
\newtheorem{lemma}[theorem]{Lemma}
\newtheorem{proposition}[theorem]{Proposition}
\newtheorem{conjecture}[theorem]{Conjecture}
\theoremstyle{definition}
\newtheorem{example}[theorem]{Example}
\begin{document}

\title{Tails of explosive birth processes\\ and applications to non-linear Pólya urns}
\author{Thomas Gottfried and Stefan Großkinsky}
\date{\today}
\maketitle

\begin{abstract}
We derive a simple expression for the tail-asymptotics of an explosive birth process at a fixed observation time conditioned on non-explosion. Using the well-established exponential embedding, we apply this result to compute the tail distribution of the number of balls of a losing colour in generalized Pólya urn models with super-linear feedback, which are known to exhibit a strong monopoly for the winning colour where losers only win a finite amount. Previous results in this direction were restricted to two colours with the same feedback, which we extend to an arbitrary finite number of colours with individual feedback mechanisms. 
As an apparent paradox, losing colours with weak feedback are more likely to win in many steps than those with strong feedback. Our approach also allows to characterize the correlations of several losing colours, which provides new insight in the distribution of other related quantities like the total number of balls for losing colours or the time to monopoly. In order to give a complete picture, we also consider sub-linear feedback and discuss asymptotics for a diverging number of colours.
\end{abstract}

\section{Introduction}

Birth processes are fundamental models for growth processes, having applications in numerous fields such as biology \cite{birthBio}, chemistry \cite{BirthChem}, computer science \cite{BirthInfo} or reliability theory \cite{shock}. The simple birth process was first introduced by Yule \cite{yule} in 1924 in connection with evolutionary theory and independently in 1937 by Furry \cite{furry} in a physical context, which is why it is also known as Yule-Furry process. Birth and death processes, which were introduced by Feller \cite{feller} in 1939, pose an important and well-studied extension of this model.

Given a \textit{rate function} $F\colon\N\coloneqq\{1, 2,\ldots\}\to(0, \infty)$,  a (pure) birth process is a homogeneous, continuous-time Markov process $(\Xi(t))_{t\ge0}$ on the state space $\N$, which jumps from $k$ to $k+1$ at rate $F(k)$. More formally, take independent random variables $\tau(k)$, which are exponentially distributed with parameter $F(k)$. Then a birth process with initial condition $\Xi(0)\in\N$ is the corresponding counting process with sojourn times $\tau(k)$, i.e.
\begin{equation}\label{eq: bp}
\Xi(t)\coloneqq \min\left\{k\in\N\colon \sum_{l=\Xi(0)}^k\tau(l) > t\right\}\quad\text{for all }t\in[0, \infty)\,,
\end{equation}
where $\min\emptyset\coloneqq\infty$. A widely studied case is the \textit{simple} (or linear) birth process, where $F(k)=\lambda k$ for some $\lambda>0$. $F(k)=\lambda$ corresponds to a homogeneous Poisson process.

Since $F$ is strictly positive we have $\Xi (t)\to\infty$ as $t\to\infty$ almost surely. $\Xi$ is called \textit{explosive} if $\Xi(t)=\infty$ for some finite $t<\infty$, which is obviously equivalent to $\sum_{l=\Xi(0)}^\infty\tau(k)<\infty$ and we define the explosion time
\begin{equation}\label{eq: defT}
T\coloneqq\sum_{k=\Xi(0)}^\infty\tau(k)<\infty\quad\text{with density }g\,.
\end{equation}
The well-established Theorem of Feller-Lundberg \cite{feller2, lundberg} states that
 \begin{equation}\label{eq: explosion}
     \Xi\mbox{ is explosive (i.e. $T<\infty$) if and only if}\quad\sum_{k=1}^\infty\frac{1}{F(k)}<\infty\,.
 \end{equation}
\cite{feller2} further contains an explicit formula for the distribution of $\Xi(t)$ for fixed $t$,
\begin{equation}\label{eq: feller}
     \P(\Xi(t)=x)=\sum_{k=\Xi(0)}^x\frac{\prod_{l=\Xi(0)}^{x-1}F(l)}{\prod_{l=\Xi(0)\atop l\ne k}^x(F(l)-F(k))}e^{-F(k)t}\quad\text{ for }x\in\N\,.
\end{equation}
Remarkably, this holds for both explosive and non-explosive birth processes, and for the non-explosive case, \cite{orsingher} provides a formula for the expectation of $\Xi(t)$. For linear birth processes, $\Xi(t)$ has a negative binomial distribution and grows exponentially in $t$ (see e.g. \cite{bartlett, yule2}). Another result worth mentioning in the context of explosive birth processes is \cite[Theorem 3.1]{pakes}, which determines the asymptotics of $\Xi(T-t)$ for $t\to0$. The asymptotics for $t\to\infty$ of non-explosive birth processes are described in e.g. \cite{waugh, waugh2, Barbour}.\\

Section \ref{sec: birth} of this paper is dedicated to the explosive case, where we characterize the tail of $\Xi(t)$ for fixed $t>0$ conditioned on not having exploded yet, i.e. $T>t$. This question was addressed in a recent paper \cite{forbes2} based on (\ref{eq: feller}) and focusing on numerical studies, and we choose a different approach to get a rigorous and fairly general result (Theorem \ref{thm: birthMain}). \\

A related model for reinforced growth of interacting agents in discrete time is the generalized Pólya urn model, which was initially proposed in \cite{Hill}. Consider an urn with balls of $A\in\{2, 3,\ldots\}$ different colours, where each colour $i\in[A]\coloneqq\{1,\ldots, A\}$ is equipped with a feedback (or weight) function $F_i\colon \N\to(0, \infty)$. Denote by $X_i$ the number of balls of colour $i$. A colour is selected from the urn with probability proportional to $F_1(X_1),\ldots, F_A(X_A)$ and a new ball of that colour is added to the urn. The generalized Pólya urn process iterates this procedure, defining a discrete-time Markov process $(X(n))_{n\in\N_0}=((X_1(n),\ldots,X_A(n)))_{n\in\N_0}$ on the state space $\N^A$. For a comprehensive overview on this model, see \cite{wir} and references therein. If at least one $F_i$ satisfies (\ref{eq: explosion}), then this process exhibits strong monopoly, i.e. all but one colour are drawn only finitely often. Using our results from Section \ref{sec: birth} and a standard exponential embedding, we determine the tail distribution of the number of balls of a losing colour (or agent) in Section \ref{sec: applicatonPolya} (in particular Theorem \ref{thm: loserSuperlin}), which generalizes known results in \cite{Zhu, Oliveira, Cotar} to situations with more than two agents with possibly different feedback functions. Moreover, we characterize the tail dependence of several losing colours and discuss applications, which encompass e.g. the total wealth of all losers or the time to monopoly. What may seem paradoxical at first is that losers with feedback close to the transition (\ref{eq: explosion}) are most likely to win in many steps. In order to provide a detailed description of this transition, we finally also consider the wealth of losers in systems, where not all agents satisfy (\ref{eq: explosion}) (Section \ref{sec: LoserSublin}).\\
 
\noindent For later convenience we establish the following notation for real sequences $(x_k)$ and $(y_k)$,
\begin{align}
    x_k\sim y_k&\quad\Leftrightarrow\quad \lim_{k\to\infty}\frac{x_k}{y_k}=1\,,\nonumber\\
    x_k\prec y_k&\quad\Leftrightarrow\quad \limsup_{k\to\infty}\frac{x_k}{y_k}<\infty\,\nonumber\\
    x_k\asymp y_k&\quad\Leftrightarrow\quad x_k\prec y_k\text{ and }y_k\prec x_k \ .
\end{align}

\section{Tails of explosive birth processes}\label{sec: birth}

\subsection{Main result}

With notation from the introduction, we can directly formulate our main result on the tail of birth processes conditioned on non-explosion.

\begin{theorem}\label{thm: birthMain}
    Assume that (\ref{eq: explosion}) is fulfilled. Then 
    \begin{equation*}
        \P(\Xi(t)=x)\sim  \frac{g(t)}{F(x)}\quad\text{for }x\to\infty\,,
    \end{equation*}
     holds for all $t>0$. For any $t_0>0$, the convergence is uniform in $t>t_0$. Consequently, we get the following conditional tail distribution:
    \begin{equation*}
        \P(\Xi(t)> x\,|\,T>t)\sim -\frac{d}{dt}\log(\P(T>t))\sum_{k=x+1}^\infty\frac{1}{F(k)}\quad\text{for }x\to\infty
    \end{equation*}
\end{theorem}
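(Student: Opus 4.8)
The plan is to work from Feller's explicit formula \eqref{eq: feller}. Fix $t>0$ and write $\Xi(0)=1$ for concreteness (the general initial condition only changes the index range). The sum in \eqref{eq: feller} has $x$ terms, and the intuition is that the dominant contribution as $x\to\infty$ comes from the \emph{last} term $k=x$, whose exponential factor $e^{-F(x)t}\to 0$ but whose combinatorial prefactor is large; all the terms $k<x$ have the smaller exponential weight $e^{-F(k)t}$ but, crucially, their prefactors carry a factor $F(x)-F(k)$ in the denominator, which kills them relative to the $k=x$ term. So the first step is to isolate the $k=x$ term,
\begin{equation*}
    a_x \coloneqq \frac{\prod_{l=1}^{x-1}F(l)}{\prod_{l=1}^{x-1}(F(l)-F(x))}\,e^{-F(x)t},
\end{equation*}
and show $a_x \sim g(t)/F(x)$, and then show that the remaining terms are $o(1/F(x))$, uniformly for $t>t_0$.

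For the main term, observe that the convolution density of $T$ is $g(t)=\sum_{k=1}^\infty c_k F(k)e^{-F(k)t}$ with $c_k = \prod_{l\ne k, l\ge 1}\frac{F(l)}{F(l)-F(k)}$ (the partial-fraction / residue expansion of the Laplace transform $\prod_{l\ge 1}\frac{F(l)}{F(l)+s}$, valid precisely because \eqref{eq: explosion} makes this product converge); this is the standard hypoexponential representation and I would cite or quickly derive it. Then
\begin{equation*}
    \frac{a_x}{g(t)/F(x)} = \frac{F(x)\,a_x}{g(t)} = \frac{1}{g(t)}\,e^{-F(x)t}\prod_{l=1}^{x-1}\frac{F(l)}{F(l)-F(x)}\cdot\frac{F(x)}{\;\;}\,,
\end{equation*}
and the key point is that $F(x)e^{-F(x)t}\prod_{l=1}^{x-1}\frac{F(l)}{F(l)-F(x)}$ should converge to $\sum_{k\ge 1} c_k F(k)e^{-F(k)t}=g(t)$. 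The cleanest way to see this is to recognize the finite product as (a truncation of) the residue-sum for the hypoexponential of the \emph{first $x-1$} holding times shifted, or more directly: the density of $\tau(1)+\cdots+\tau(x)$ at time $t$ equals $\sum_{k=1}^x \big(\prod_{l\le x, l\ne k}\frac{F(l)}{F(l)-F(k)}\big)F(k)e^{-F(k)t}$, and $F(x)\cdot$(the $k=x$ combinatorial coefficient of that density) is exactly $F(x)e^{-F(x)t}\prod_{l=1}^{x-1}\frac{F(l)}{F(l)-F(x)}$; as $x\to\infty$ this partial sum of $g$ converges to $g(t)$ while the $k=x$ term is the one I am extracting, so I must instead argue the \emph{other} terms of that density sum converge to $g(t)$ and the extracted term vanishes — no, cleaner: show directly that $F(x)a_x = g(t) - r_x(t)$ where $r_x(t)\to 0$. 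Concretely, let $g_n$ be the density of $\tau(1)+\cdots+\tau(n)$; then $g_n\to g$ pointwise and (dominated convergence, using that the tail holding times are tiny) uniformly on $t>t_0$. Writing $g_{x} = \sum_{k=1}^{x} b_{k,x}F(k)e^{-F(k)t}$ and noting $b_{x,x}F(x)e^{-F(x)t}=F(x)a_x$ while $\sum_{k=1}^{x-1}b_{k,x}F(k)e^{-F(k)t}=\P(\Xi(t)=x)\cdot(\text{something})$ — I see the combinatorial bookkeeping here is the delicate part, so let me state the obstacle plainly below. The upshot I expect: $\P(\Xi(t)=x)=g_x(t)/F(x)\cdot(1+o(1))$ is essentially a rearrangement of \eqref{eq: feller}, and $g_x(t)\to g(t)$ uniformly on $t>t_0$ by dominated convergence on the explicit density, giving the first display.

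From the first display, the second is routine. Sum over $x$: $\P(\Xi(t)>x) = \sum_{y>x}\P(\Xi(t)=y) \sim g(t)\sum_{y>x}\frac1{F(y)}$ (the $\sim$ survives the summation because the tail $\sum_{y>x}1/F(y)\to 0$ and the error in each term is a uniform-in-that-regime little-$o$ of $1/F(y)$ — a standard Cesàro/Toeplitz argument, or Dini-type since the partial tails are monotone). On the event $\{T>t\}$ we have $\{\Xi(t)<\infty\}$, and $\P(\Xi(t)>x\mid T>t)=\P(\Xi(t)>x)/\P(T>t)$. Finally $g(t) = -\frac{d}{dt}\P(T>t)$ by definition of the density, so $g(t)/\P(T>t) = -\frac{d}{dt}\log\P(T>t)$, yielding the claimed form.

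The main obstacle is the combinatorial identity controlling the non-dominant terms in \eqref{eq: feller}: making rigorous, with uniformity in $t>t_0$, that $\sum_{k=1}^{x-1}(\text{coeff})\,e^{-F(k)t} = o(1/F(x))$ while the $k=x$ term is $\sim g(t)/F(x)$. I expect this to reduce to the observation that each such coefficient contains the factor $1/(F(x)-F(k))$, so the whole sub-sum is $O(1/F(x))$ times the density $g_{x-1}(t)$ of $\tau(1)+\cdots+\tau(x-1)$ evaluated in a way that still needs the $e^{-F(k)t}$ weights to not blow the coefficients up — sign changes in $F(l)-F(k)$ when $F$ is non-monotone are the real nuisance, and I would handle them by passing to the absolutely convergent Laplace-transform / residue representation of $g$ rather than manipulating \eqref{eq: feller} term by term, using $\sum_k 1/F(k)<\infty$ to dominate.
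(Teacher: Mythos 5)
Your route through Feller's formula \eqref{eq: feller} is genuinely different from the paper's, which never touches \eqref{eq: feller}: the paper conditions on the last sojourn time, writes $\P(\Xi(t)=x)=F(x)\int_0^\infty\P(t-u\le T(x-1)<t)\,e^{-F(x)u}\,du$, splits the integral at $1/\sqrt{F(x)}$, and controls the inner probability by the mean value theorem together with Lemma \ref{lemma: birth}. The ``combinatorial bookkeeping'' you flag as the main obstacle actually dissolves: your upshot $\P(\Xi(t)=x)=g_x(t)/F(x)\,(1+o(1))$ is in fact an \emph{exact} identity, $g_x(t)=F(x)\,\P(\Xi(t)=x)$, which you can get either by matching \eqref{eq: feller} against Lemma \ref{lemma: zhu} term by term (the products coincide exactly), or --- better, since it needs no injectivity of $F$ and no residue calculus --- from the convolution $g_x(t)=\int_0^t g_{x-1}(s)F(x)e^{-F(x)(t-s)}\,ds=F(x)\,\P(T(x-1)\le t<T(x))$. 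With that identity in hand, the fixed-$t$ statement reduces to the pointwise convergence $g_x(t)\to g(t)$ with $g(t)>0$, and your summation step for the conditional tail is fine.

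The genuine gap is the uniformity in $t>t_0$. After the reduction, the uniform claim is exactly $\sup_{t\ge t_0}|g_x(t)/g(t)-1|\to 0$, and ``dominated convergence on the explicit density'' only yields $\sup_{t}|g_x(t)-g(t)|\to 0$. These are not the same: $g(t)\to 0$ exponentially as $t\to\infty$ (see \eqref{eq: Ttail}), so sup-norm closeness says nothing about the ratio for large $t$; the paper explicitly remarks that Lemma \ref{lemma: birth} is a strictly stronger assertion than global uniform convergence of $g_k$ to $g$, and that the ratio convergence genuinely fails to be uniform near $t=0$ (Lemma \ref{lemma: gProperties}, part 4). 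Closing this requires the structural input of Lemma \ref{lemma: gProperties} --- unimodality of $g$ and a uniform bound on $\big|\frac{d}{dt}\log g(t)\big|$ for $t\ge t_0$ --- fed into the convolution $g_x(t)=\int_t^\infty g(s)\bar g_x(s-t)\,ds$ as in Lemma \ref{lemma: birth}. Without an argument of this kind the uniform version of the first display (on which Corollaries \ref{cor: uniformBound} and \ref{cor: birthMain}, and hence the Pólya-urn application, rest) is not established. A secondary point: as written, \eqref{eq: feller} and Lemma \ref{lemma: zhu} presuppose pairwise distinct $F(k)$, which the theorem does not assume; the probabilistic form of the identity above avoids this issue as well.
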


\begin{figure}
  \centering
  \subfloat[$F(k)=k^2$]{\includegraphics[width=0.5\linewidth]{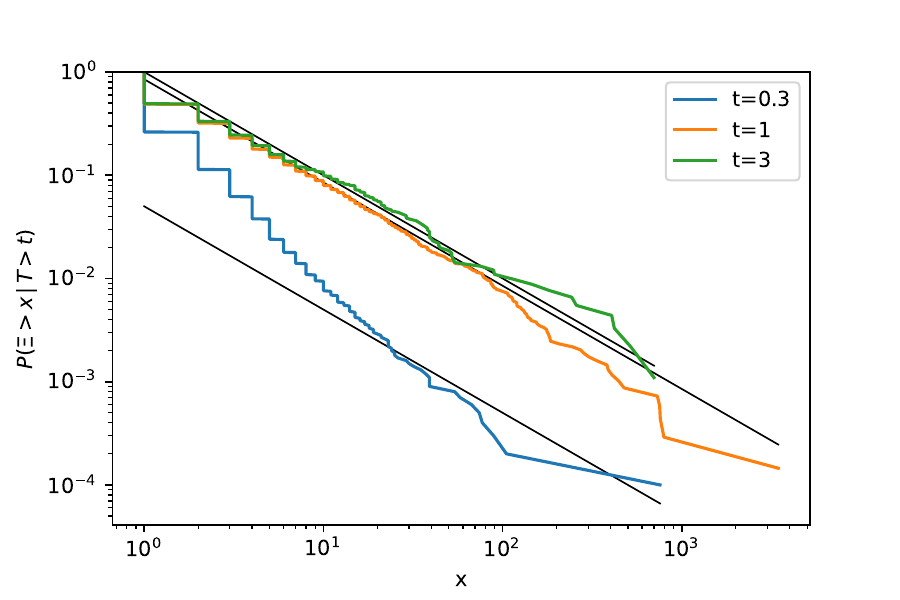}}
  \subfloat[$F(k)=e^{k-1}$]{\includegraphics[width=0.5\linewidth]{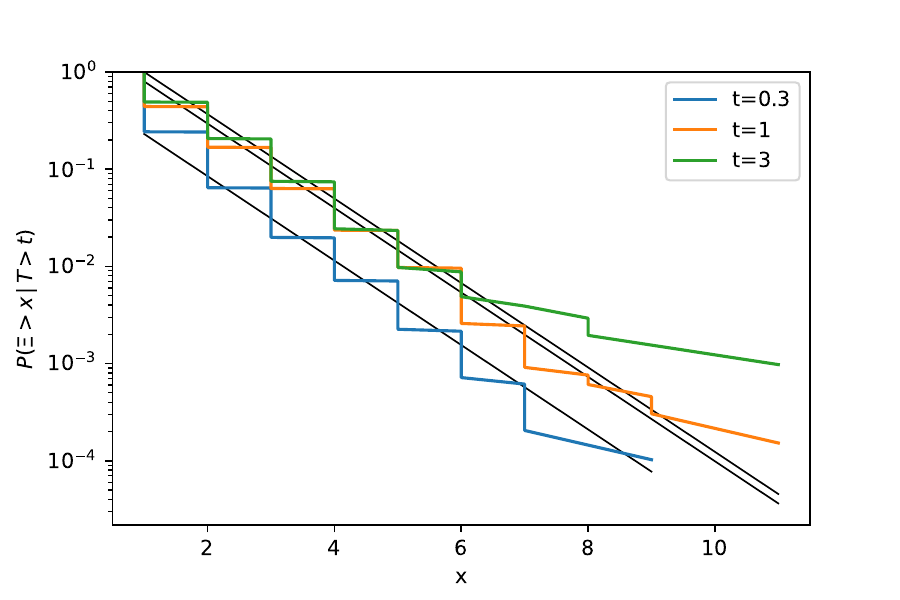}}
  \caption{The empirical distribution of $\Xi(t)$ conditioned on $T>t$ for different $F$ and $t$. The black lines show the predicted tail according to Theorem \ref{thm: birthMain}. 10,000 realizations of the process \eqref{eq: bp} were simulated each. In (a) 10 (resp. 3112, resp. 9088) have already exploded at time $t=0.3$ (resp. $t=1$, resp. $t=3$) and in (b) 30 (resp. 3424, resp. 8974). As the criterion for explosion the simulation was stopped after $10^6$ summands in (a) and 100 summands in (b).}
  \label{figure: birthSimulation}
\end{figure}

Figure \ref{figure: birthSimulation} numerically illustrates Theorem \ref{thm: birthMain} for quadratic and exponential rate functions. Indeed, when $t$ is moderate or large, then the approximation seems good even for small $x$. The approximation is rather inaccurate for small $t$, when $\Xi(t)$ is likely to be close to $\Xi(0)$. Theorem \ref{thm: birthMain} separates the tail of $\Xi(t)$ into a prefactor that depends only on $t$ and a sequence that depends only on $x$. The prefactors $-\frac{d}{dt}\log\P(T>t)=\frac{g(t)}{\P(T>t)}$ are plotted in Figure \ref{figure: ParetoFaktor} for several feedback functions and will be discussed in the following.

 \begin{figure}
  \centering
  \includegraphics[width=0.7\linewidth]{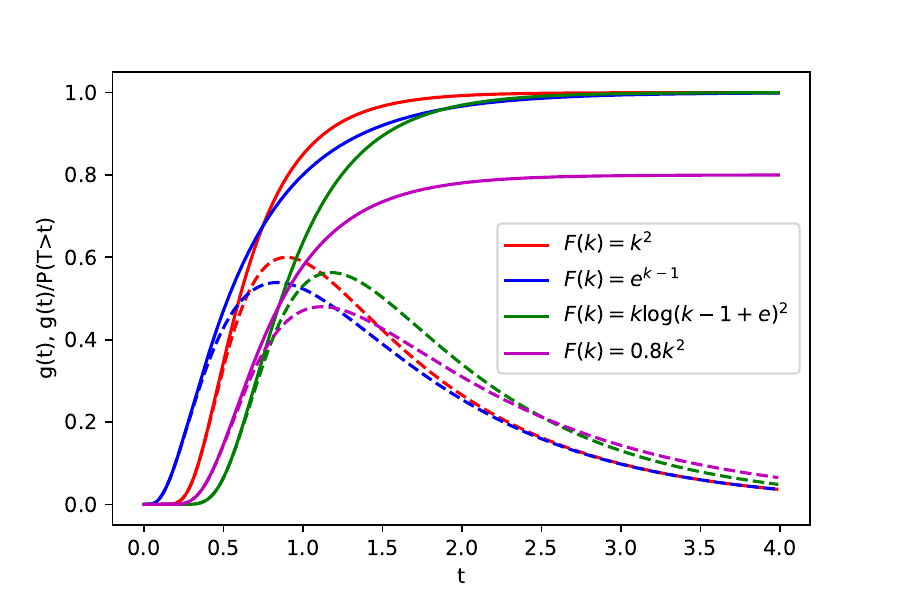}
  \caption{ The factors $-\frac{d}{dt}\log\P(T>t)=\frac{g(t)}{\P(T>t)}$ (full lines)  and $g(t)$ (dotted lines) from Theorem \ref{thm: birthMain} for different feedback functions. The full lines converge to $F(1)$ for $t\to\infty$, as discussed later in \eqref{eq: prelim}. For the numerical computation, $g$ was approximated by $g_n$ from Lemma \ref{lemma: zhu} with $n=100$.}
  \label{figure: ParetoFaktor}
\end{figure}

In the proof of Theorem \ref{thm: birthMain}, we will be particularly interested in first passage times, which we formally define as
 \begin{equation*}
     T(k)\coloneqq \sum_{l=\Xi(0)}^k\tau(l)\quad\text{with density }g_k
 \end{equation*}
 and the remaining time until explosion
 \begin{equation*}
     \bar T(k)\coloneqq \sum_{l=k+1}^\infty\tau(l)\quad\text{with density }\bar g_k
 \end{equation*}
 for $k\ge \Xi(0)$. We start the proof with an auxiliary lemma on analytic properties of the density $g$.

\begin{lemma}\label{lemma: gProperties}
    If (\ref{eq: explosion}) is satisfied, then the density $g$ of the explosion time $T$ (\ref{eq: defT}) exists and has the following properties.
    \begin{enumerate}
        \item $g$ is bounded with $g(0)=0$ and $g(t)>0$ for all $t>0$.
        \item There is a unique maximum $m>0$, such that $g(t)$ is increasing for $t<m$ and decreasing for $t>m$.
        \item For any $t_0>0$, the logarithmic derivative $\big|\frac{d}{dt}\log g(t)\big|$ is bounded uniformly in $t\ge t_0$.
        \item $g$ is not analytic in $0$. In particular, $\frac{d^k}{dt^k}g(0)=0$ holds for all $k\ge 1$
    \end{enumerate}
\end{lemma}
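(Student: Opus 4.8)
The plan is to reduce every assertion to elementary convolution identities among the densities of the tail sums, the only further ingredient being log-concavity of the finite partial sums, which then passes to the limit. I would first assume without loss of generality that $\Xi(0)=1$ (otherwise shift the index of $F$; \eqref{eq: explosion} is preserved), write $F_k\coloneqq F(k)$, and let $\mu_n$ be the law of $\bar T(n)=\sum_{l>n}\tau(l)$. By \eqref{eq: explosion} we have $T<\infty$ a.s., and since $T=\tau(1)+\bar T(1)$ with independent summands and $\tau(1)$ absolutely continuous, $T$ has a density $g$; writing $\bar g_0\coloneqq g$ for uniformity, the same computation (each $\bar T(k)$ is absolutely continuous, containing the summand $\tau(k+1)$) gives for all $k\ge1$
\begin{equation*}
 \bar g_{k-1}(t)=F_ke^{-F_kt}\int_0^t e^{F_ks}\,\bar g_k(s)\,ds ,\qquad \bar g_{k-1}'=F_k\big(\bar g_k-\bar g_{k-1}\big).
\end{equation*}
Since $t\mapsto\int_0^t e^{F_ks}\bar g_k(s)\,ds$ is absolutely continuous and vanishes at $0$, each $\bar g_k$ (hence $g$) is continuous on $[0,\infty)$ and equals $0$ at $0$; and from $g(t)=\int_0^t F_1e^{-F_1u}\bar g_1(t-u)\,du$ with $F_1e^{-F_1u}\le F_1$ and $\int_0^t\bar g_1\le1$ one gets $g\le F_1$ (i.e.\ $\le F(\Xi(0))$ in the original labelling). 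For positivity, $g(t)>0$ exactly when $e^{F_1s}\bar g_1(s)$ is not a.e.\ zero on $(0,t)$, i.e.\ when $\P(\bar T(1)<t)>0$, which holds for all $t>0$ because, choosing $m$ with $\P(\bar T(m)<t/2)>0$ (possible as $\bar T(m)\to0$ a.s.), $\P(\bar T(1)<t)\ge\P\big(\sum_{l=2}^m\tau(l)<t/2\big)\,\P(\bar T(m)<t/2)>0$. This gives~(1).

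For~(4), feeding continuity of $\bar g_k$ back into the integral formula yields $\bar g_{k-1}\in C^1$, and bootstrapping the differentiability order in $\bar g_{k-1}'=F_k(\bar g_k-\bar g_{k-1})$ gives $\bar g_k\in C^\infty([0,\infty))$ for all $k\ge0$, so $g\in C^\infty([0,\infty))$. Differentiating the same identity $j$ times and inducting on $j$ (base case $\bar g_k(0)=0$) gives $\bar g_k^{(j)}(0)=0$ for all $j\ge0$ and all $k$; in particular $g^{(j)}(0)=0$ for every $j\ge1$. Were $g$ analytic at $0$ it would then vanish on a right neighbourhood of $0$, contradicting~(1); hence $g$ is not analytic there.

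For~(2) and~(3) the crux is that $g$ is log-concave. Let $g_n$ be the density of $\sum_{k=1}^n\tau(k)=T-\bar T(n)$; being a convolution of log-concave exponential densities, $g_n$ is log-concave (equivalently, it is a Pólya frequency function, its Laplace transform $\prod_{k\le n}(1+s/F_k)^{-1}$ being the reciprocal of a polynomial with only negative real roots). The same convolution identity gives $g_n'=F_1(h_n-g_n)$ with $h_n$ the density of $\sum_{k=2}^n\tau(k)$, and the bound above yields $\lVert g_n\rVert_\infty\le F_1$, $\lVert h_n\rVert_\infty\le F_2$, so the $g_n$ are uniformly bounded and uniformly Lipschitz with constant $L\coloneqq F_1(F_1+F_2)$. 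Since $g=g_n*\mu_n$, $\mu_n$ is a probability measure, and $\bar T(n)\to0$ a.s., the splitting
\begin{equation*}
 |g(t)-g_n(t)|\le\int_{[0,\infty)}|g_n(t-s)-g_n(t)|\,\mu_n(ds)\le L\varepsilon+2F_1\,\P\big(\bar T(n)>\varepsilon\big)
\end{equation*}
(any $\varepsilon>0$; $g_n$ extended by $0$ to the left) shows $g_n\to g$ uniformly, whence $g$ is log-concave. Then~(3) follows: $(\log g)'=g'/g$ is nonincreasing on $(0,\infty)$, hence $\le(\log g)'(t_0)<\infty$ for $t\ge t_0$, while $g'=F_1(\bar g_1-g)$ gives $(\log g)'=F_1(\bar g_1/g-1)\ge-F_1$.

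Finally~(2): $\log g$ is concave on $(0,\infty)=\{g>0\}$ and tends to $-\infty$ at both endpoints (at $0$ since $g(0)=0$; at $\infty$ since $g\in L^1$ forces $\liminf_{t\to\infty}g=0$ and log-concavity makes $g$ eventually monotone), hence nondecreasing up to a maximiser $m\in(0,\infty)$ and nonincreasing afterwards. Strictness and uniqueness of $m$ follow once one rules out $g$ being constant on some $[a,b]\subset(0,\infty)$ — but then $g'\equiv0$ there, and iterating $\bar g_{k-1}'=F_k(\bar g_k-\bar g_{k-1})$ forces every $\bar g_k\equiv c$ on $[a,b]$, so that $c(b-a)=\P(\bar T(k)\in[a,b])\to0$, giving $c=0$ and contradicting~(1). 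I expect the one genuinely delicate step to be the upgrade from the a.s.\ convergence $\sum_{k\le n}\tau(k)\to T$ to uniform convergence $g_n\to g$ of the densities; the uniform Lipschitz bound on the $g_n$, itself immediate from $g_n'=F_1(h_n-g_n)$ and the universal sup-norm bound $F_1$, is exactly what makes this work, and everything else is routine.
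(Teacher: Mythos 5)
Your proof is correct, and for parts 2--4 it takes a genuinely different route from the paper. For unimodality the paper runs an induction showing each finite-stage density $g_k$ has a single mode $m_k$ (via the convolution identity for $T(k+1)=T(k)+\tau(k+1)$) and then passes to the limit; you instead observe that each $g_n$ is log-concave as a convolution of exponential densities, prove uniform convergence $g_n\to g$ directly from the uniform Lipschitz bound $\lVert g_n'\rVert_\infty\le F(1)(F(1)+F(2))$ (where the paper defers to a citation), and conclude that $g$ itself is log-concave. This buys you more than the paper states: part 3 becomes immediate because $(\log g)'$ is monotone (the paper's argument for the upper bound on $g'/g$ is left rather implicit), and part 2 reduces to excluding flat pieces, which you do cleanly by iterating the exact recursion $\bar g_{k-1}'=F(k)(\bar g_k-\bar g_{k-1})$ on the tail densities. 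For part 4 the paper works with the finite approximations $g_{k+2}$ and a stochastic-comparison argument to transfer $\frac{d^k}{dt^k}g_{k+2}(0)=0$ to $g$; you instead bootstrap smoothness and the vanishing of all derivatives at $0$ directly from the same tail recursion, which has the added benefit of actually establishing $g\in C^\infty$ (something the paper implicitly assumes when writing $\frac{d^k}{dt^k}g(0)$). The one step worth spelling out a little more is the passage from concavity of $\log g$ to \emph{strict} monotonicity off the mode: concavity alone gives strict increase up to the set of maximizers and strict decrease after it, and your no-flat-piece argument is exactly what collapses that set to a single point $m$; stated in that order the argument is airtight.
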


\begin{proof}
    For simplicity of notation, assume $\Xi(0)=1$. Existence of $g$ follows e.g. from the inversion formula for characteristic functions. Since $g_2(0)=0$ and $\P(T(2)\le x)\ge\P(T\le x)$, we have $g(0)=0$. From the  convolution formula for $T(1)+\bar T (1)$, i.e.
    \begin{align}\label{eq: g_convolution}
        g(t)=\int_0^t g_1(t-s)\bar g_1(s)\,ds=F(1)\int_0^t e^{-F(1)(t-s)}\bar g_1(s)\,ds\,,
    \end{align}
   we can conclude that $g$ is bounded by $\max_{t\ge0}g_1(t)=F(1)$. From Markov's inequality, we get that for any $\epsilon>0$ and $k$ large enough
   $$\P(\bar T(k)<\epsilon/2)>0\,,$$
   such that $\P(T<\epsilon)>0$ holds, too. Hence, $g(t)>0$ for small $t>0$ and analogously $\bar g_1(t)>0$ for small $t>0$. Then $g(t)>0$ for any $t>0$ follows from the convolution formula (\ref{eq: g_convolution}).

    For 2., we show that for all $k\in\N$ there is $m_k\ge0$ such that $g_k'(t)>0$ for all $t<m_k$ and $g_k'(t)<0$ for all $t>m_k$. This implies 2. since $g_k(t)\to g(t)$ for $k\to\infty$ uniformly in $t\ge0$ (see \cite{boos} for a rigorous argument). We show this by induction over $k$. Assume that $g_k$ has the desired properties. Then we get from the convolution formula for $T(k+1)=T(k)+\tau(k+1)$
    \begin{equation}\label{eq: inter1}
        g_{k+1}'(t)=\int_0^t F(k+1) e^{-F(k+1) s}g_k'(t-s)ds+  F(k+1)e^{-F(k+1) t}g_k'(0)\,.
    \end{equation}
    For small enough $t>0$, it follows $g_{k+1}'(t)>0$ since $g_{k}'(t)>0$. Define
    \begin{equation}\label{eq: inter2}
        m_{k+1}\coloneqq \min \{t>0\colon g_{k+1}'(t)=0\}
    \end{equation}
    and rephrase
    \begin{align*}
        \frac{g_{k+1}'(t)}{F(k+1)}&=\int_{0}^{t-m_{k+1}} e^{-F(k+1) s}g_k'(t-s)ds+\int_{t-m_{k+1}}^{t} e^{-F(k+1) s}g_k'(t-s)ds+e^{-F(k+1) t}g_k'(0)\\
     &=\int_{0}^{t-m_{k+1}} e^{-F(k+1) s}g_k'(t-s)ds\\
     &\quad+e^{-F(k+1) (t-m_{k+1})}\left(\int_{0}^{m_{k+1}} e^{-F(k+1) s}g_k'(m_{k+1}-s)ds+  e^{-F(k+1) m_{k+1}}g_k'(0)\right)\\
     &=\int_{0}^{t-m_{k+1}} e^{-F(k+1) s}g_k'(t-s)ds
    \end{align*}
     for $t>m_{k+1}$, using \eqref{eq: inter1} and \eqref{eq: inter2}. Since necessarily $g_k'(t)<0$ for $t>m_{k+1}>m_k$, we get $g_{k+1}'(t)<0$ for $t>m_{k+1}$.

     For 3., it only remains to show $\liminf_{t\to\infty}\frac{d}{dt}\log g(t)>-\infty$ together with 1. and 2.. This follows from
     \begin{align*}
         \frac{d}{dt}\log g(t)=\frac{g'(t)}{g(t)}=\frac{-F(1)^2 \int_0^t e^{-F(1) (t-s)}\bar g_1(s)ds + F(1)\bar g_1(t) }{ \int_0^t F(1)e^{-F(1) (t-s)}\bar g_1(s)ds }>-F(1)\,.
     \end{align*}

     For 4., we first show that $\frac{d^k}{dt^k}g_{k+2}(0)=0$  via induction over $k\ge 1$. Assume that $\frac{d^{k-1}}{dt^{k-1}}g_{k+1}(0)=0$, which is easy to check for $k=1$. Then:
     \begin{align*}
         \frac{d^k}{dt^k}g_{k+2}(t)&=F(k+1)\frac{d}{dt}\int_0^t e^{-F(k+1)s}\frac{d^{(k-1)}}{dt^{(k-1)}} g_{k+1}(t-s)ds\\
         &=F(k+1)\frac{d}{dt}\int_0^t e^{-F(k+1)(t-s)}\frac{d^{(k-1)}}{dt^{(k-1)}} g_{k+1}(s)ds\\
         &=-F(k+1)^2\int_0^t e^{-F(k+1)(t-s)}\frac{d^{(k-1)}}{dt^{(k-1)}} g_{k+1}(s)ds+F(k+1)\frac{d^{(k-1)}}{dt^{(k-1)}} g_{k+1}(t)\xrightarrow{t\to 0}0
     \end{align*}
Now, assume that there is $k\ge1$ such that $\frac{d^k}{dt^k}g(0)>0$. Then $g_{k+2}(t)<g(t)$ for small enough $t>0$ since $\frac{d^k}{dt^k}g_{k+2}(0)=0$. On the other hand,  $g_{k+2}(t)> g(t)$ for small enough $t>0$ due to $\P(T(k+2)\le t)>\P(T\le t)$. Hence,  $\frac{d^k}{dt^k}g(0)=0$.
\end{proof}


For injective $F\colon \N\to(0, \infty)$, \cite{Zhu} provides a useful explicit formula for $g(t)$, which is useful for numerical approximation (see Figure \ref{figure: ParetoFaktor}).

\begin{lemma}\label{lemma: zhu}\cite[Lemma 3.2.1., Lemma 3.2.2.]{Zhu}
    Let $F(1),\ldots ,F(n)$ be positive and pairwise distinct with $\tau(k)$ exponentially distributed and independent with parameters $F(k)$. Then $T(n)=\sum_{k=1}^n \tau(k)$ has the density function
    $$g_n(x)=\left(\prod_{k=1}^n F(k)\right)\sum_{k=1}^n\frac{e^{-F(k)x}}{\prod_{l\ne k}(F(l)-F(k))}\quad\text{for }x\ge0\,.$$
    The formula also applies for $n=\infty$ and $g(x)$ if \eqref{eq: explosion} holds, i.e. $\sum_{k=1}^\infty\frac{1}{F(k)}<\infty$.
\end{lemma}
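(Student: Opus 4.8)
The plan is to prove the identity for finite $n$ by Laplace transforms and then obtain the case $n=\infty$ by a limiting argument.

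For finite $n$, the density of $\tau(k)$ is $x\mapsto F(k)e^{-F(k)x}$, so its Laplace transform is $s\mapsto F(k)/(s+F(k))$, and independence gives $\E\!\big[e^{-sT(n)}\big]=\prod_{k=1}^n F(k)/(s+F(k))$. This is a proper rational function of $s$ whose denominator $\prod_{k=1}^n(s+F(k))$ has the $n$ distinct simple zeros $-F(1),\dots,-F(n)$ (distinct because the $F(k)$ are pairwise distinct), so a partial-fraction decomposition applies, $\prod_{k=1}^n F(k)/(s+F(k))=\sum_{k=1}^n r_k/(s+F(k))$ with
$$r_k=\lim_{s\to -F(k)}(s+F(k))\prod_{j=1}^n\frac{F(j)}{s+F(j)}=\frac{\prod_{j=1}^n F(j)}{\prod_{j\ne k}(F(j)-F(k))}\,,$$
and inverting term by term via $\mathcal L^{-1}\big[(s+F(k))^{-1}\big](x)=e^{-F(k)x}$ on $[0,\infty)$ yields exactly the claimed formula for $g_n$. (Alternatively one argues by induction on $n$ using the convolution $T(n)=T(n-1)+\tau(n)$; the inductive step then boils down to the elementary identity $\sum_{k=1}^{n-1}\big[(F(k)-F(n))\prod_{l<n,\,l\ne k}(F(l)-F(k))\big]^{-1}=\big[\prod_{l<n}(F(l)-F(n))\big]^{-1}$, itself just the fact that the residues of $z\mapsto\big[(z-F(n))\prod_{l<n}(z-F(l))\big]^{-1}$ sum to zero since this function is $O(|z|^{-n})$ with $n\ge 2$.)

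For $n=\infty$, observe first that \eqref{eq: explosion} forces $F(k)\to\infty$, hence $\sum_{l\ne k}F(k)/F(l)<\infty$, so every infinite product $\prod_{l\ne k}F(l)/(F(l)-F(k))$ converges to a finite nonzero value; thus the coefficient $c_k\coloneqq F(k)\prod_{l\ne k}F(l)/(F(l)-F(k))$ is well defined, and it is the $n\to\infty$ limit of the (now $n$-dependent) $r_k=r_k^{(n)}$ above. The asserted formula $g(x)=\sum_{k=1}^\infty c_ke^{-F(k)x}$ should be obtained by letting $n\to\infty$ in $g_n(x)=\sum_{k=1}^n r_k^{(n)}e^{-F(k)x}$: the left side converges to $g(x)$ --- uniformly, as already invoked in the proof of Lemma \ref{lemma: gProperties} --- while on the right one combines $r_k^{(n)}\to c_k$ for each fixed $k$ with a tail estimate uniform in $n$. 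The cleanest rigorous route to the same conclusion goes through the characteristic function $\phi(\xi)\coloneqq\prod_{k=1}^\infty F(k)/(F(k)-i\xi)$, whose defining product converges absolutely and which satisfies $|\phi(\xi)|\le F(1)F(2)\big[(F(1)^2+\xi^2)(F(2)^2+\xi^2)\big]^{-1/2}$, hence is integrable; the Fourier inversion formula then recovers $g$ as a continuous function, and shifting the contour into the lower half-plane and collecting the residues of $\xi\mapsto e^{-i\xi x}\phi(\xi)$ at the simple poles $\xi=-iF(k)$ produces exactly the terms $c_ke^{-F(k)x}$.

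I expect the only genuine obstacle to be this passage to the limit $n\to\infty$ --- the exchange of $\lim_n$ with $\sum_k$, equivalently the Jordan-type contour estimates underlying the Mittag--Leffler expansion of $\phi$: one needs quantitative decay of $|\phi|$ along a sequence of expanding circles bounded away from the poles $-iF(k)$, which is precisely what the hypothesis $\sum_k 1/F(k)<\infty$ delivers by forcing $F$ to grow fast. The one point demanding real care is when several $F(k)$ lie very close together, since then the corresponding series terms (and poles of $\phi$) are large but nearly cancelling and must be grouped; for the monotone, well-separated feedback functions of the applications this does not occur and the series even converges absolutely in the naive sense. The remaining ingredients --- the partial-fraction algebra, the residue computations, and the convergence of the infinite products defining the $c_k$ --- are routine.
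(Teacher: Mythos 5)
The paper offers no proof of this lemma at all --- it is quoted verbatim from \cite[Lemmas 3.2.1--3.2.2]{Zhu} --- so there is nothing to compare against; your proposal has to stand on its own. The finite-$n$ half does: the Laplace transform of $T(n)$ is the proper rational function $\prod_{k}F(k)/(s+F(k))$ with simple poles at the pairwise distinct points $-F(k)$, your residues $r_k$ are computed correctly, and termwise inversion gives exactly the stated $g_n$. (Your parenthetical induction alternative is also fine in substance, though the ``sum of residues'' identity you quote is for the residues of $z\mapsto\bigl[(z-F(n))\prod_{l<n}(z-F(l))\bigr]^{-1}$, whose residues at $F(k)$ carry denominators $\prod_{l\ne k}(F(k)-F(l))$ rather than your $\prod_{l\ne k}(F(l)-F(k))$; the two differ by a sign $(-1)^{n-2}$ that has to be tracked.)

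The $n=\infty$ half, however, is not proven --- and you say as much yourself. You correctly observe that the literal formula must be read through the regrouped coefficients $c_k=F(k)\prod_{l\ne k}F(l)/(F(l)-F(k))$, and that $\phi\in L^1$ so that Fourier inversion recovers a continuous $g$. But both of your proposed routes stop exactly at the step that constitutes the content of the statement: for the termwise limit you need a bound on $|r_k^{(n)}|e^{-F(k)x}$ that is uniform in $n$ and summable in $k$, and this is genuinely delicate because the coefficients are large and oscillating --- already for $F(k)=k^2$ one computes $c_k=2(-1)^{k+1}k^2$ from the product formula for $\sin(\pi z)/(\pi z)$, and $\sum_k r_k^{(n)}=0$ for $n\ge 2$ (since $g_n(0)=0$), so no naive domination is available; for the contour route you need decay of $\phi$ along expanding contours staying away from the poles $-iF(k)$, which you assert ``is precisely what the hypothesis delivers'' without an estimate, and which again degenerates when consecutive $F(k)$ cluster. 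Since the lemma claims the identity for \emph{every} $F$ satisfying \eqref{eq: explosion}, not just well-separated monotone ones, the gap you flag in your last paragraph is a real one: the second sentence of the lemma remains unestablished by your argument.
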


We continue our proof of Theorem \ref{thm: birthMain} with another technical lemma.

\begin{lemma}\label{lemma: birth}
    Assume that (\ref{eq: explosion}) is fulfilled and that $F$ is strictly monotone. Then we have for any $t_0>0$ that
    \begin{equation*}
        \frac{g_k(t)}{g(t)}\xrightarrow{}1\quad\text{for }k\to\infty \text{ uniformly in }t\ge t_0\,.
    \end{equation*}
\end{lemma}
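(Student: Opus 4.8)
The plan is to exploit the independent decomposition $T = T(k) + \bar T(k)$, which yields the convolution identity $g(t) = \int_0^t g_k(t-s)\,\bar g_k(s)\,ds$, and to show that $g(t)/g_k(t)$ is squeezed between $1-o(1)$ and $1+o(1)$ uniformly in $t\ge t_0$ as $k\to\infty$. The engine is that $\bar g_k$ concentrates at $0$: by \eqref{eq: explosion}, $\E[\bar T(k)]=\sum_{l>k}1/F(l)\to 0$, so $\P(\bar T(k)>\delta)\to 0$ for every $\delta>0$ by Markov's inequality. As a preliminary reduction, take $\Xi(0)=1$ without loss of generality and observe that a strictly monotone $F$ satisfying \eqref{eq: explosion} is necessarily strictly \emph{increasing} (a strictly decreasing positive $F$ gives $\sum 1/F(k)=\infty$); hence $F(k)\to\infty$ and $F(l)\ge F(2)>F(1)$ for every $l\ge 2$, which is what makes the relevant exponential moments finite.

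For the \emph{upper} bound I would reuse the device from the proof of Lemma~\ref{lemma: gProperties}(3): peeling off the first sojourn time gives $g_k(t)=F(1)\int_0^t e^{-F(1)(t-s)}h_{k-1}(s)\,ds$, where $h_{k-1}$ is the density of $\sum_{l=2}^k\tau(l)$, so $g_k'(t)+F(1)g_k(t)=F(1)h_{k-1}(t)\ge 0$ and therefore $\frac{d}{dt}\log g_k(t)\ge -F(1)$ uniformly in $k$ and $t>0$. Integrating, $g_k(t-s)\le e^{F(1)s}g_k(t)$ for $0\le s\le t$, so
\[
\frac{g(t)}{g_k(t)} \;=\; \frac{1}{g_k(t)}\int_0^t g_k(t-s)\,\bar g_k(s)\,ds \;\le\; \int_0^\infty e^{F(1)s}\,\bar g_k(s)\,ds \;=\; \E\big[e^{F(1)\bar T(k)}\big] \;=\; \prod_{l>k}\frac{F(l)}{F(l)-F(1)},
\]
and the right-hand side, call it $C_k$, converges to $1$ since $\sum_{l>k}1/F(l)\to 0$. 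In particular $g_k(t)\ge g(t)/C_k$ for every $t>0$.

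The \emph{lower} bound $g(t)/g_k(t)\ge 1-o(1)$ is where the restriction $t\ge t_0$ is genuinely used (for $t\to 0$ one has $g_k(t)/g(t)\to\infty$), and I expect the uniform-in-$k$ control of the $g_k$ near their bulk to be the main obstacle. Fix $\delta\in(0,t_0)$; since $t\ge t_0>\delta$, restricting the convolution integral to $s\in[0,\delta]$ gives $g(t)\ge\int_0^\delta g_k(t-s)\,\bar g_k(s)\,ds$, and I would split into two regimes. On $t\ge m+\delta$, where $m$ is the unique mode of $g$ (Lemma~\ref{lemma: gProperties}(2)), I would use that each $g_k$ is unimodal with mode $m_k$ and that $m_k\le m$ — increasingness of $(m_k)$ is shown in the proof of Lemma~\ref{lemma: gProperties}(2), and $m_k\to m$ follows from $\|g_k-g\|_\infty\to 0$ and uniqueness of the mode of $g$ (the alternative $m_k\to\infty$ would force $g$ to be nondecreasing on all of $(0,\infty)$); then $g_k$ is nonincreasing on $[m,\infty)$, so $g_k(t-s)\ge g_k(t)$ for $s\in[0,\delta]$ and $g(t)/g_k(t)\ge \P(\bar T(k)\le\delta)$. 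On the complementary compact regime $t\in[t_0,m+\delta]$, I would use a uniform Lipschitz bound for the $g_k$: from $g_k'=F(1)(h_{k-1}-g_k)$ together with $\|g_k\|_\infty\le F(1)$ and $\|h_{k-1}\|_\infty\le F(2)$ (both from the convolution representation) one gets $|g_k'|\le F(1)(F(1)+F(2))\eqqcolon L$; moreover $c\coloneqq\inf_{[t_0-\delta,\,m+\delta]}g>0$ and $\|g_k-g\|_\infty\to 0$ force $\inf_{[t_0-\delta,\,m+\delta]}g_k\ge c/2$ for $k$ large. Hence on this regime $g_k(t-s)\ge g_k(t)-L\delta$ and so $g_k(t-s)/g_k(t)\ge 1-2L\delta/c$ for $s\in[0,\delta]$, giving $g(t)/g_k(t)\ge(1-2L\delta/c)\,\P(\bar T(k)\le\delta)$.

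Putting this together: given $\epsilon>0$, first pick $\delta\in(0,t_0)$ with $2L\delta/c<\epsilon$, then $k$ large enough that $C_k<1+\epsilon$, $\P(\bar T(k)\le\delta)>1-\epsilon$ and $\inf_{[t_0-\delta,\,m+\delta]}g_k\ge c/2$; the three estimates then give $1-2\epsilon\le g(t)/g_k(t)\le 1+\epsilon$ for all $t\ge t_0$, which is the assertion. The substance of the argument is thus concentrated in the uniform-in-$k$ regularity of the $g_k$ on their bulk — the lower bound on $\frac{d}{dt}\log g_k$, the Lipschitz bound, and strict positivity on compact subsets of $(0,\infty)$ — each of which reduces to peeling off the first exponential and to the uniform convergence $g_k\to g$, together with the elementary fact that $m_k\le m$.
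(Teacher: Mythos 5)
Your proof is correct, but it takes a genuinely different route from the paper's. The paper starts from the identity $g_k(t)=\int_t^\infty g(s)\,\bar g_k(s-t)\,ds$ (presented as ``the convolution formula for $T(k)=T-\bar T(k)$'') and then only needs regularity of the single limit density $g$: the bounded logarithmic derivative from Lemma~\ref{lemma: gProperties} yields $|g(t+s)/g(t)-1|<\epsilon$ for $0<s<\delta$ uniformly in $t\ge t_0$, and unimodality controls the remainder $\int_{t+\delta}^\infty$. You instead work with the forward convolution $g=g_k*\bar g_k$, which is the unimpeachable direction of the decomposition (only $T(k)$ and $\bar T(k)$ are independent, not $T$ and $\bar T(k)$, so the paper's identity requires a separate justification that your route avoids entirely); the price is that you must control the whole family $(g_k)$ uniformly in $k$ rather than just $g$. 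Your uniform inputs all check out: $\frac{d}{dt}\log g_k\ge -F(1)$ and the resulting bound $g(t)/g_k(t)\le \E\,e^{F(1)\bar T(k)}=\prod_{l>k}F(l)/(F(l)-F(1))\to 1$ (finite because strict monotonicity plus \eqref{eq: explosion} forces $F$ increasing) give a clean upper estimate valid for all $t>0$; for the lower estimate, the uniform Lipschitz constant $L$, the uniform positivity on compacts via $\|g_k-g\|_\infty\to0$, and the mode comparison $m_k\le m$ (ruling out $m_k\to\infty$ because a nondecreasing integrable density is impossible, and identifying the finite limit with $m$ by strict unimodality of $g$) are all sound. One cosmetic repair: the constant $c=\inf_{[t_0-\delta,\,m+\delta]}g$ depends on $\delta$, so the quantifiers should be reordered — fix $c:=\inf_{[t_0/2,\,m+t_0]}g>0$ once and then choose $\delta<t_0/2$ with $2L\delta/c<\epsilon$. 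What your approach buys is robustness (no deconvolution-type identity to justify) and the extra byproduct $g_k\ge g/C_k$ globally in $t$; what the paper's buys, granting its starting identity, is brevity, since it never needs uniform-in-$k$ regularity of the $g_k$.
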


\begin{proof}
    Let $\epsilon>0$. For all $t_0>0$, there is $\delta>0$ such that $|g(t+s)/g(t)-1|<\epsilon$ for $0<s<\delta$ and all $t>t_0$, because the logarithmic derivative of $g$ is bounded (see Lemma \ref{lemma: gProperties}). Let $k$ be large enough such that $\P(\bar T(k)>\delta)<\epsilon$. Then we have by the convolution formula for $T(k)=T-\bar T(k)$
    \begin{align*}
        \left|\frac{g_k(t)}{g(t)}-1\right|&=\left|\frac{1}{g(t)}\int_t^\infty g(s)\bar g_k(s-t)ds-1\right|\\
        &\le\left|\frac{1}{g(t)}\int_t^{t+\delta} g(s)\bar g_k(s-t)ds-1\right|+\left|\frac{1}{g(t)}\int_{t+\delta}^\infty g(s)\bar g_k(s-t)ds\right|\\
        &\le 2\epsilon+\epsilon \frac{\max_{s\ge t+\delta}g(s)}{g(t)}
    \end{align*}
for all $t>0$. Properties 1.~and 2.~of Lemma \ref{lemma: gProperties} complete the proof.
\end{proof}

Note that obviously $g_k$ converges to $g$ globally uniformly for $t\ge0$, but Lemma \ref{lemma: birth} is a stronger assertion. We are now ready to finish the proof of Theorem \ref{thm: birthMain}.

\begin{proof}[Proof of Theorem \ref{thm: birthMain}]
    By the independence of $ T(k)$ and $\tau(k+1)$, we get:
    \begin{align*}
        &\P(\Xi(t)=x)=\P( T(x-1)<t,\,  T(x)\ge t)=\P(t-\tau(x)\le  T(x-1)<t)\\
        &=F(x)\int_0^\infty \P(t-u\le  T(x-1)<t)\,e^{-F(x)u}\,du\\
        &=F(x)\int_0^{a_x} \P(t-u\le  T(x-1)<t)\,e^{-F(x)u}\,du+F(x)\int_{a_x}^\infty \P(t-u\le  T(x-1)<t)\,e^{-F(x)u}\,du\,,
    \end{align*}
    where $a_x\coloneqq \frac{1}{\sqrt{F(x)}}$. Using the mean value theorem and the uniform asymptotic $g_x\sim g$ for $x\to\infty$ (see Lemma \ref{lemma: birth}), this implies for some intermediate value $\tilde t(u)\in(t-u, t)$
    \begin{align}\label{eq: meanValueTheorem}
        \P(t-u\le T(x)<t)=g_{x}(\tilde t(u))u\sim g(t)u\quad\text{for }u\to0,\, x\to\infty
    \end{align}
     Hence, the first integral is asymptotically
     \begin{align*}
         F(x)&\int_0^{a_x} \P(t-u\le  T(x-1)<t)\,e^{-F(x)u}\,du\sim g(t)\int_0^{a_x}u\,F(x)e^{-F(x)u}\,du\\
         &=-g(t)\left(a_x+\frac{1}{F(x)}\right)e^{-F(x)a_x}+\frac{g(t)}{F(x)}\sim\frac{g(t)}{F(x)}\quad\text{for }x\to\infty\,.
     \end{align*}
      The second integral is bounded by
     \begin{align*}
         F(x)\int_{a_x}^\infty \P(t-u\le  T(x-1)<t)\,e^{-F(x)u}\,du\le \int_{a_x}^\infty F(x)\,e^{-F(x)u}\,du= e^{-F(x)a_x}\,,
     \end{align*}
     which converges to zero faster than $\frac{1}{F(x)}$ for $x\to\infty$.
\end{proof}

\subsection{Discussion and extensions}

With Theorem \ref{thm: birthMain} the tail of $\Xi (t)$ conditioned on non-explosion $T>t$ is heavier the slower the feedback function $F$ increases. This may seem paradoxical at first and can be explained as follows: For fast increasing $F$, if conditioned on $T$ to be large, it is likely that the first summands of $T$ are large and the chain spends a long time in states with the smallest exit rates so that $\Xi(t)$ is rather small. This corresponds to the general principle in large deviation theory that the rare event $T>t$ (for $t\to\infty$) is realized in the most likely way. We get back to this in more detail in Section \ref{sec: tinfty}. On the other hand, the prefactor $-\frac{d}{dt}\log(\P(T>t))$ for small $t$ in Theorem \ref{thm: birthMain} is significantly smaller when $F$ increases slowly, as is visible in Figure \ref{figure: ParetoFaktor}.

Theorem \ref{thm: birthMain} also directly implies
\begin{equation}\label{eq: moments}
    \E[\Xi(t)^r\,|\, T>t]<\infty\quad\Leftrightarrow\quad \sum_{k=1}^\infty \frac{k^r}{F(k)}<\infty
\end{equation}
for any $r>0$. Let us now discuss some interesting examples.

\begin{example}\label{example: birth}
    \begin{enumerate}
        \item Let $F(k)=k^\beta$ for $\beta>1$. Then
        \begin{equation}\label{eq: loserTailSuperlin}
            \P(\Xi(t)> x\,|\,T>t)\sim\frac{-1}{\beta -1}\,\frac{d}{dt}\log(\P(T>t))\, x^{1-\beta}\quad\text{for }x\to\infty
        \end{equation}
        is a power-law distribution with exponent $1-\beta$, and
        $$\E[\Xi(t)^r\,|\, T>t]<\infty\quad\Leftrightarrow\quad r<\beta-1\,.$$
        
        \item Let $F(k)=e^{\beta k}$ for $\beta>0$. Then 
        $$\P(\Xi(t)> x\,|\,T>t)\sim-\frac{e}{\beta(e-1)}\,\frac{d}{dt}\log(\P(T>t))\, e^{-\beta (x+1)}\quad\text{for }x\to\infty$$
        has an exponential tail and $\E[\Xi(t)^r\,|\, T>t]<\infty$ for all $r>0$. Here and in the first example the tail is lighter the larger $\beta$ is.
        
        \item Let $F(k)=k(\log k)^\beta$ for $\beta>1$. Then
        \begin{equation}\label{eq: loserTailbeta>1}
            \P(\Xi(t)> x\,|\,T>t)\sim\frac{-1}{\beta-1}\,\frac{d}{dt}\log(\P(T>t))\, (\log x)^{1-\beta}\quad\text{for }x\to\infty
        \end{equation}
        decays logarithmically slowly and $\E[\Xi(t)^r\,|\, T>t]=\infty$ for all $r>0$.

    \end{enumerate}
\end{example}

\noindent Obviously we have
$$\lim_{t\to0}\P(\Xi(t)=\Xi(0)\,|\,T>t)=1$$
for all feedback functions, but remarkably the tail asymptotics for $\Xi(t)$ hold for any positive $t>0$, in particular for polynomially growing feedback it has a heavy-tailed distribution. This is consistent with vanishing prefactors in Theorem \ref{thm: birthMain}, i.e.
$$\lim_{t\to0}\frac{d}{dt}\log\P(T>t) =0\quad\text{and}\quad g(0)=0\,,$$
which is established in Lemma \ref{lemma: gProperties} and illustrated in Figure \ref{figure: ParetoFaktor}.
Since, $\Xi(t)=\infty$ is likely for large $t$, the prefactor $g(t)$ of the mass function vanishes for large $t$, while that of the tail conditioned on non-explosion $T>t$ converges to the minimum of $F$ (see \ref{eq: prelim} in Section \ref{sec: tinfty}).

By replacing the mean value theorem in (\ref{eq: meanValueTheorem}) by second order Taylor expansion,
$$\P(t-u< T(x)<t)=g_{x}(t)u+\frac12g_{x}'(\tilde t(u))u^2\,,$$
we can also determine the rate of convergence in Theorem \ref{thm: birthMain}:
\begin{align*}
    \P(\Xi(t)= x)&-\frac{g(t)}{F(x)}\sim\frac{g'(t)}{2}\int_0^\infty u^2\,F(x)\,e^{-F(x)dx}=\frac{g'(t)}{F(x)^2}\quad\text{for }x\to\infty
\end{align*}
Furthermore, we can also derive an asymptotic upper bound, which is even globally uniform.

\begin{corollary}\label{cor: uniformBound}
     Assume that (\ref{eq: explosion}) is fulfilled. Then
     $$\P(\Xi(t)=x)\prec \frac{g_x(t)}{F(x)}\quad\text{for }x\to\infty$$
     and
       $$\P(\Xi(t)> x\,|\,T>t)\prec \frac{g_x(t)}{\P(T>t)}\sum_{k=x+1}^\infty\frac{1}{F(k)}\quad\text{for }x\to\infty$$
     holds globally uniformly in $t\ge0$.
\end{corollary}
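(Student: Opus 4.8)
The plan is to revisit the proof of Theorem \ref{thm: birthMain} and keep track of what survives when we do \emph{not} condition on $x\to\infty$ together with $u\to 0$, and do \emph{not} use Lemma \ref{lemma: birth} (which only holds for monotone $F$ and only uniformly on $t\ge t_0$). First I would start from the exact identity already derived there,
\begin{equation*}
    \P(\Xi(t)=x)=F(x)\int_0^\infty \P(t-u\le T(x-1)<t)\,e^{-F(x)u}\,du\,,
\end{equation*}
valid for all $t\ge 0$. The key observation is the elementary monotonicity bound
\begin{equation*}
    \P(t-u\le T(x-1)<t)\le \P(T(x-1)<t)\le \P(T<t)\le \P(T(x)<t)\,,
\end{equation*}
together with the finer estimate $\P(t-u\le T(x-1)<t)\le \max_{s\le t}g_{x-1}(s)\cdot u$, but actually for a clean global bound I would instead bound $\P(t-u\le T(x-1)<t)$ by $\min\{1,\, u\,\|g_{x-1}\|_\infty\}$ and use $\|g_{x-1}\|_\infty\le F(1)$ from Lemma \ref{lemma: gProperties}(1) (more precisely $\|g_k\|_\infty\le\min_l F(l)$; for $\Xi(0)=1$ this is $F(1)$). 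Splitting the $u$-integral at $a_x=1/\sqrt{F(x)}$ as in the original proof, the near part contributes at most $\|g_{x-1}\|_\infty\,F(x)\int_0^{a_x}u\,e^{-F(x)u}\,du\le \|g_{x-1}\|_\infty/F(x)$ and the far part at most $e^{-F(x)a_x}=e^{-\sqrt{F(x)}}$, which is $o(1/F(x))$ since $F(x)\to\infty$ (this growth is forced by \eqref{eq: explosion}). That already gives $\P(\Xi(t)=x)\prec 1/F(x)$ uniformly, but to get the sharper $g_x(t)/F(x)$ I would instead keep $g_x(t)$ in the estimate: use $\P(t-u\le T(x-1)<t)=\P(T(x)<t)-\P(T(x)<t-u)+\P(t-u\le \tau(x)+T(x-1) \text{ ``overshoot''})$; cleaner is to write $\P(t-u\le T(x-1)<t)\le g_x(t)\,u\cdot C$ fails pointwise, so the honest route is via $\P(t-u< T(x)<t)\le u\sup_{s\in(t-u,t)}g_x(s)$ and then to compare $\sup_{s\in(t-u,t)}g_x(s)$ with $g_x(t)$ using unimodality of $g_x$ (Lemma \ref{lemma: gProperties}(2) applied to $g_k$, which is established inside its proof): for $t$ left of the mode $g_x$ is increasing so the sup is $g_x(t)$; for $t$ right of the mode the sup is $g_x(t-u)$ which for $u\le a_x\to 0$ is $\le g_x(t)(1+o(1))$ with $o(1)$ \emph{uniform in $t\ge 0$} precisely because one can check $|\frac{d}{ds}\log g_x(s)|$ is bounded uniformly over $x$ and over $s$ bounded away from $0$ — and near $0$ one uses instead the crude bound. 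The constant hidden in $\prec$ is then any number $>1$, which suffices for the $\prec$ claim.

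The main obstacle I anticipate is handling small $t$ uniformly: near $t=0$ the ratio $g_x(s)/g_x(t)$ for $s\in(t-u,t)$ is not controlled by a bounded logarithmic derivative, since $\frac{d}{dt}\log g_k(t)\to\infty$ as $t\to 0$ (consistent with Lemma \ref{lemma: gProperties}(4)). The way around it is that for $t$ in a fixed neighbourhood of $0$ we do not need the sharp constant: we can simply use $\P(t-u\le T(x-1)<t)\le \P(T(x-1)<t)$ and then note $\P(T(x-1)<t)\le \P(T<t)$ and, crucially, that on that neighbourhood $g_x(t)$ is bounded below only trivially, so instead we bound $\P(\Xi(t)=x)\le F(x)\,\P(T(x-1)<t)\int_0^\infty e^{-F(x)u}\,du=\P(T(x-1)<t)\le \P(T(x)<t)$; but $\P(T(x)<t)$ is not obviously $\prec g_x(t)\sum_{k>x}1/F(k)$. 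Here I would use that for $t$ bounded and $x$ large, $\P(T(x)<t)$ is itself extremely small and in fact $\P(T(x)<t)=\P(\bar T(x-1)\,\text{small})$-type tail which decays faster than any $g_x(t)/F(x)$; more carefully, $g_x(t)\ge c\,\P(T(x)<t)$ is false, so instead I split: for $x$ so large that the mode $m_x$ of $g_x$ exceeds $t$ (which happens eventually since $T(x)\to T$ and hence modes $m_x\to m>0$... wait, $m_x$ could be small) — to be safe, I would just prove the bound separately on $\{t\ge t_0\}$ via Lemma \ref{lemma: birth}-style arguments and on $\{t\le t_0\}$ by the observation that on any compact $[0,t_0]$ we have $g_x(t)\asymp g_x(t)$ trivially and bound $\P(\Xi(t)=x)\le F(x)\int_0^{a_x}g_x(t)(1+\epsilon)u\,e^{-F(x)u}du+e^{-\sqrt{F(x)}}$ using unimodality plus, on the increasing branch, exactness, and on the decreasing branch, $u\le a_x$ small.

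Concretely, the step order is: (i) recall the exact integral identity for $\P(\Xi(t)=x)$; (ii) record $\|g_k\|_\infty\le \min_l F(l)$ and unimodality of each $g_k$ with mode $m_k$, both from (the proof of) Lemma \ref{lemma: gProperties}; (iii) split the $u$-integral at $a_x=F(x)^{-1/2}$, bound the far tail by $e^{-\sqrt{F(x)}}=o(1/F(x))$; (iv) for the near part, bound $\P(t-u< T(x)<t)\le u\sup_{(t-u,t)}g_x$ and control $\sup_{(t-u,t)}g_x$ by $g_x(t)$ up to a factor $(1+o(1))$ uniformly in $t$, treating $t\le m_x$ (increasing branch, sup $=g_x(t)$) and $t> m_x$ (decreasing branch, sup $=g_x(t-u)\le g_x(t)e^{u\,\sup|(\log g_x)'|}$, the sup taken over $[m_x,\infty)$ where it is bounded uniformly in $x$ by an argument like the one in Lemma \ref{lemma: gProperties}(3)); hence $F(x)\int_0^{a_x}\!\!\le g_x(t)(1+o(1))\cdot F(x)\int_0^{a_x}u e^{-F(x)u}du\le g_x(t)(1+o(1))/F(x)$; (v) conclude $\P(\Xi(t)=x)\le C g_x(t)/F(x)$ for all $t\ge 0$ and all $x$ large, i.e. the first display, with the uniformity being exactly what the word ``globally'' asserts; (vi) for the conditional statement, write $\P(\Xi(t)>x\mid T>t)=\sum_{y>x}\P(\Xi(t)=y)/\P(T>t)\le \frac{C}{\P(T>t)}\sum_{y>x}g_y(t)/F(y)$ and finally replace $g_y(t)$ by $g_x(t)$ for $y>x$: on $t\le m_y$ this only increases... no — to handle this one uses that $\max_{y\ge x} g_y(t)\le g_x(t)(1+o(1))$ uniformly, which follows because $g_y\to g$ uniformly so $\sup_{y}g_y(t)$ is comparable to $g(t)$ which is comparable to $g_x(t)$ by Lemma \ref{lemma: birth} on $t\ge t_0$ and by direct compactness on $[0,t_0]$; this yields $\P(\Xi(t)>x\mid T>t)\prec \frac{g_x(t)}{\P(T>t)}\sum_{k>x}1/F(k)$ uniformly in $t\ge 0$, as claimed. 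I expect step (iv)/(vi)'s uniform logarithmic-derivative bound for the $g_k$ to be the only genuinely delicate point; everything else is bookkeeping on top of the already-proven Theorem \ref{thm: birthMain} and Lemma \ref{lemma: gProperties}.
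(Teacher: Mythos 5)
Your proposal follows essentially the same route as the paper's proof: rerun the proof of Theorem \ref{thm: birthMain}, split the $u$-integral at $a_x=F(x)^{-1/2}$, and replace the asymptotic mean-value step \eqref{eq: meanValueTheorem} by the non-asymptotic bound $g_x(\tilde t(u))u\le g_x(t)u$, which is exactly what the unimodality in Lemma \ref{lemma: gProperties} buys near $t=0$. The paper is slightly more economical in one place: it only ever uses the increasing branch of $g_x$, because for $t\ge t_0$ it simply quotes the uniform asymptotics of Theorem \ref{thm: birthMain}, so no uniform-in-$x$ bound on $(\log g_x)'$ along the decreasing branch is needed. Your alternative treatment of the decreasing branch is nevertheless sound, since the convolution computation in the proof of Lemma \ref{lemma: gProperties}(3) gives $g_x'/g_x>-F(\Xi(0))$ for every $x$ simultaneously.

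There is one step whose justification does not work as written. In step (vi) you need $\sup_{y>x}g_y(t)\prec g_x(t)$ uniformly in $t\in[0,t_0]$ in order to pull $g_x(t)$ out of $\sum_{y>x}g_y(t)/F(y)$, and you propose to get this ``by direct compactness on $[0,t_0]$''. Compactness gives nothing uniform in $y$, and bounding numerator and denominator separately fails because both vanish as $t\to0$ (indeed $g_y(t)\asymp t^{\,y-\Xi(0)}$ near $0$, so the ratio $g_y(t)/g_x(t)$ must be controlled jointly). The correct argument is the convolution identity $g_y=g_x*h_{x,y}$ with $h_{x,y}$ the density of $T(y)-T(x)\ge 0$: for $t\le m_x$ (the mode of $g_x$) it gives $g_y(t)=\int_0^t g_x(t-s)h_{x,y}(s)\,ds\le g_x(t)$, and the induction in the proof of Lemma \ref{lemma: gProperties}(2) shows the modes $m_k$ are nondecreasing, so a single $t_0$ works for all $x$. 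The same convolution step also cleans up the off-by-one issue you brush against (the exact identity involves $T(x-1)$, so the monotonicity bound naturally produces $g_{x-1}(t)$, which is \emph{not} $\prec g_x(t)$ near $t=0$): writing $\P(\Xi(t)=x)=\int_0^t g_{x-1}(s)e^{-F(x)(t-s)}\,ds$ and comparing with the convolution formula for $g_x$ in fact yields the exact identity $\P(\Xi(t)=x)=g_x(t)/F(x)$ for $x>\Xi(0)$, which makes the first display immediate. With these repairs your argument is complete; the remaining steps coincide with the paper's.
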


\begin{proof}
    Using Theorem \ref{thm: birthMain}, we only have to derive the uniform asymptotic bound for small $t\le t_0$. According to Lemma \ref{lemma: gProperties}, there is $t_0>0$ such that $g_x$ and $g$ is increasing on the interval $(0, t_0)$.  Then the proof of  Theorem \ref{thm: birthMain} applies analogously for $t\le t_0$ if we just replace (\ref{eq: meanValueTheorem}) by the (non-asymptotic) inequality
$$ \P(t-u< T(x)<t)=g_x(\tilde t(u))u\le g_x(t)u\,.$$
\end{proof}

As a consequence of 4. in Lemma \ref{lemma: gProperties}, $\lim_{t\to 0}\frac{g_k(t)}{g(t)}=\infty$ holds for all $k\ge1$ and, consequently,  the convergence in Lemma \ref{lemma: birth} is not uniform in $t\ge0$. For this reason, there is no uniform lower bound corresponding to Corollary \ref{cor: uniformBound}. In particular, the assumption $t_0>0$ is necessary in Theorem \ref{thm: birthMain}. This is already hinted at by the blue line in Figure \ref{figure: birthSimulation} (a).

Applying Corollary \ref{cor: uniformBound}, it is possible to randomize the observation time $t$ in Theorem \ref{thm: birthMain}, which turns out to be useful for the application in Section \ref{sec: applicatonPolya}.

\begin{corollary}\label{cor: birthMain}
     Assume that (\ref{eq: explosion}) is fulfilled. Let $S$ denote a positive random variable, which is independent of the process $\Xi$. Then as $x\to\infty$
      $$\P(\Xi(S)=x)\sim \frac{\E g(S)}{F(x)}\quad\text{and}\quad P(\Xi(S)> x|T>S)\sim \frac{\E g(S)}{\P (T>S)}\sum_{k=x+1}^\infty \frac{1}{F(k)}\,.$$ 
\end{corollary}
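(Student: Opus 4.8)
The plan is to condition on the value of $S$ and integrate the pointwise and uniform asymptotics from Theorem~\ref{thm: birthMain} and Corollary~\ref{cor: uniformBound} against the law of $S$, justifying the interchange of limit and expectation by dominated convergence. Concretely, since $S$ is independent of $\Xi$, we may write
\[
\P(\Xi(S)=x)=\E\big[\P(\Xi(t)=x)\big|_{t=S}\big]=\E\!\left[F(x)\,\P(\Xi(S)=x)\cdot\frac{1}{F(x)}\right],
\]
so that $F(x)\,\P(\Xi(S)=x)=\E\big[h_x(S)\big]$ with $h_x(t):=F(x)\,\P(\Xi(t)=x)$. Theorem~\ref{thm: birthMain} gives $h_x(t)\to g(t)$ pointwise for every $t>0$, and Corollary~\ref{cor: uniformBound} furnishes a bound $h_x(t)\le C\,g_x(t)$ valid globally uniformly in $t\ge0$ for $x$ large. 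Since $g_x(t)\le\max_{s\ge0}g_1(s)=F(1)$ for all $x$ and all $t$ (as in the proof of part~1 of Lemma~\ref{lemma: gProperties}, using $g_x\le g_{\Xi(0)+1}$ being bounded, or more simply $g_x\le F(\Xi(0))$ by the convolution representation), the family $(h_x)$ is uniformly bounded by an integrable constant. Dominated convergence then yields $F(x)\,\P(\Xi(S)=x)\to\E[g(S)]$, which is the first claim.

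For the second claim I would proceed identically. Write
\[
\frac{\P(\Xi(S)>x)}{\sum_{k=x+1}^\infty 1/F(k)}
=\E\!\left[\frac{\P(\Xi(t)>x)}{\sum_{k=x+1}^\infty 1/F(k)}\bigg|_{t=S}\right]
=\E[\tilde h_x(S)],
\]
where $\tilde h_x(t):=\big(\sum_{k=x+1}^\infty 1/F(k)\big)^{-1}\P(\Xi(t)>x)$. Theorem~\ref{thm: birthMain} gives $\tilde h_x(t)\to -\tfrac{d}{dt}\log\P(T>t)=g(t)/\P(T>t)$ pointwise for $t>0$, and summing the uniform bound of Corollary~\ref{cor: uniformBound} over the tail gives $\tilde h_x(t)\le C\,g_x(t)/\P(T>t)$. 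The only subtlety here is that $g_x(t)/\P(T>t)$ is \emph{not} bounded uniformly near $t=0$; however, one can instead bound $\P(\Xi(t)>x)$ directly using $\sum_{k>x}1/F(k)$ and the explicit integral in the proof of Corollary~\ref{cor: uniformBound}, which shows $\tilde h_x(t)\le C'$ for a constant independent of $t\ge0$ and $x$ large (the tail probability $\P(\Xi(t)>x)$ is itself trivially at most $1$, and the prefactor $\sum_{k>x}1/F(k)\to0$, so in fact it suffices to dominate $\tilde h_x(t)$ by the integrable function $g_x(t)/\P(T>t)\le F(\Xi(0))/\P(T>t)$ — wait, this diverges). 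The clean fix is to split $\E[\tilde h_x(S)]=\E[\tilde h_x(S)\mathbf 1_{S>t_0}]+\E[\tilde h_x(S)\mathbf 1_{S\le t_0}]$; on $\{S>t_0\}$ the convergence is uniform by Theorem~\ref{thm: birthMain} and $g(S)/\P(T>S)$ is bounded, while on $\{S\le t_0\}$ one uses $\tilde h_x(t)\le\big(\sum_{k>x}1/F(k)\big)^{-1}\P(T(x)<t_0)\to 0$ together with $\P(S\le t_0)$ being small after first choosing $t_0$, handled by a standard $\varepsilon/3$ argument; alternatively one simply notes $\P(T>S)>0$ forces $S$ to have mass away from $0$ only if $T$ is a.s.\ positive, which it is, so $\E[g(S)]>0$ and the ratio converges.

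The main obstacle is precisely this lack of a uniform lower bound / domination as $t\to0$, flagged already in the paragraph preceding Corollary~\ref{cor: birthMain}. I would resolve it by the two-regime split just described: fix $t_0>0$, use uniform convergence on $\{S>t_0\}$ and boundedness of $g$ and of $g/\P(T>t)$ there, and on $\{S\le t_0\}$ exploit that $\P(\Xi(t)>x)\le\P(T(x)\le t)\le\P(T(x)\le t_0)$ decays in $x$ uniformly over that range, so that $\E[\tilde h_x(S)\mathbf 1_{S\le t_0}]\to 0$; since $t_0$ is arbitrary this pins down the limit as $\E[g(S)]/\P(T>S)$. Everything else — the algebraic identity relating $\P(\Xi(S)>x|T>S)$ to the unconditioned quantity via $\P(T>S)=\P(\Xi(S)<\infty)$, and the fact that $\E[g(S)]<\infty$ since $g\le F(\Xi(0))$ — is routine.
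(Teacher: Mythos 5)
Your proof of the first asymptotic is correct and is essentially the paper's argument: the paper splits $\int_0^\infty\P(\Xi(s)=x)\,d\P_S(s)$ at a small $\epsilon$ and lets $\epsilon\to0$, which is exactly your dominated-convergence step with the dominating function $C\,g_x(t)\le C\,F(\Xi(0))$ supplied by Corollary \ref{cor: uniformBound} and the convolution bound on $g_x$.

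The second asymptotic is where there is a genuine gap. First, as written your $\tilde h_x(t)$ uses $\P(\Xi(t)>x)$, which includes the event $\{\Xi(t)=\infty\}=\{T\le t\}$; that probability does not vanish as $x\to\infty$, so $\tilde h_x(t)\to\infty$ for fixed $t>0$ rather than to $g(t)/\P(T>t)$ --- you need the joint event $\{\Xi(t)>x,\,T>t\}$ throughout. More seriously, your treatment of the regime $\{S\le t_0\}$ rests on the bound $\tilde h_x(t)\le\big(\sum_{k>x}1/F(k)\big)^{-1}\P(T(x)\le t_0)$ together with the claim that this tends to $0$. That is false: $\P(T(x)\le t_0)$ decreases to $\P(T\le t_0)>0$ while the prefactor $\big(\sum_{k>x}1/F(k)\big)^{-1}$ diverges, so your upper bound tends to infinity. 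The estimate $\P(\Xi(t)>x,\,T>t)\le\P(T(x)\le t)$ discards exactly the smallness you need, since it also counts paths that have already exploded. Two repairs are available: (i) the paper's route, which is to note that $\{\Xi(S)=k\}\subset\{T>S\}$ for finite $k$, write $\P(\Xi(S)>x,\,T>S)=\sum_{k>x}\P(\Xi(S)=k)$, and read off the tail asymptotic directly from the already-proved first claim (termwise asymptotic equivalence of positive summable sequences passes to tail sums); or (ii) sum the first bound of Corollary \ref{cor: uniformBound} over $k>x$ and use $g_k(t)\le F(\Xi(0))$ to obtain $\tilde h_x(t)\le C$ uniformly in $t\ge0$ and $x$ large, after which your dominated-convergence argument goes through verbatim. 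Your closing alternative (``$\E[g(S)]>0$ and the ratio converges'') is not an argument and should be dropped.
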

\begin{proof}
    Denote by $\P_S$ the law of $S$. Take $\epsilon>0$. Then:
    \begin{align*}
        \P(\Xi(S)=x)=\int_0^\infty \P(\Xi(s)=x)\,d\P_S(s)=\int_0^\epsilon \P(\Xi(s)=x)\,d\P_S(s)+\int_\epsilon^\infty \P(\Xi(s)=x)\,d\P_S(s)
    \end{align*}
Let us now look at the second integral and apply Theorem \ref{thm: birthMain}.
\begin{align*}
    \int_\epsilon^\infty \P(\Xi(s)=x)\,d\P_S(s)\sim  \int_\epsilon^\infty \frac{g(s)}{F(x)}\,d\P_S(s)\xrightarrow{\epsilon\to 0}\frac{\E g(S)}{F(x)}
\end{align*}
The first integral is asymptotically negligible due to Corollary \ref{cor: uniformBound}:
\begin{align*}
    \int_0^\epsilon \P(\Xi(s)=x)\,d\P_S(s)\prec  \int_0^\epsilon \frac{g_x(s)}{F(x)}\,d\P_S(s)\xrightarrow{\epsilon\to 0}0
\end{align*}
The second asymptotic follows via
\begin{align*}
     P(\Xi(S)> x|T>S)=\frac{\P(\Xi(S)> x,\, T>S)}{\P(T>S)}=\frac{\sum_{k=x+1}^\infty\P(\Xi(S)=k)}{\P(T>S)}\sim \frac{\E g(S)}{\P (T>S)}\sum_{k=x+1}^\infty \frac{1}{F(k)}
\end{align*}
for $x\to\infty$. Note that $\Xi(S)=k$ implies $T>S$.
\end{proof}

\subsection{Behaviour for $t\to\infty$\label{sec: tinfty}}

Figure \ref{figure: ParetoFaktor} also indicates that $-\frac{d}{dt}\log(\P(T>t))$ is increasing in $t$ and converges for $t\to\infty$. The latter observation can be rigorously confirmed for strictly monotone $F$, since we get from Lemma \ref{lemma: zhu} that
\begin{equation*}\label{eq: gasmypt}
    g(t)\sim F(\Xi(0))\left(\prod_{l=\Xi(0)+1}^\infty\frac{F(l)}{F(l)-F(\Xi(0))}\right)e^{-F(\Xi(0))t}\quad\text{for }t\to\infty
\end{equation*}
and by integration
\begin{align}\label{eq: Ttail}
    \P(T>t)\sim \left(\prod_{l=\Xi(0)+1}^\infty\frac{F(l)}{F(l)-F(\Xi(0))}\right)e^{-F(\Xi(0))t}\quad\text{for }t\to\infty\,.
\end{align}
Thus,
\begin{equation}\label{eq: prelim}
\lim_{t\to\infty}-\frac{d}{dt}\log(\P(T>t))=\lim_{t\to\infty}\frac{g(t)}{\P(T>t)}=F(\Xi(0))\,.    
\end{equation}
As a consequence, it stands to reason that $\lim_{t\to\infty}\P(\Xi(t)\in\cdot\,|\,T>t)$ defines a non-degenerated distribution. The following Proposition confirms this conjecture.

\begin{proposition}\label{prop: ttoinfty}
    Assume that (\ref{eq: explosion}) is fulfilled and that $F$ is strictly monotone. Then
    $$\P(\Xi(t)>x\,|\,T>t)\to 1-\prod_{k=x+1}^\infty\left(1-\frac{F(\Xi(0))}{F(k)}\right)\in (0,1)\quad\mbox{as }t\to\infty$$
    for all $x\ge\Xi(0)$. Moreover, we have for any $s>0$
    \begin{align*}
        \P(T-\tau(\Xi(0))>s\,|\, T>t)\prec  e^{-(F(\Xi(0)+1)-F(\Xi(0)))s}\quad\text{as }t\to\infty
    \end{align*}
   uniformly in $s$.
\end{proposition}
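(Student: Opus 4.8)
The first assertion is a limit statement for the conditional tail, so the natural strategy is to combine the exact asymptotic of Theorem \ref{thm: birthMain} (the $x\to\infty$ regime) with the explicit large-$t$ expansions \eqref{eq: Ttail} and \eqref{eq: prelim}, but here we actually want the \emph{full} conditional law in the limit $t\to\infty$ for each \emph{fixed} $x$. The cleanest route is to work directly with the event $\{\Xi(t)\le x,\ T>t\}$. For $\Xi(0)\le k\le x$ we have $\{\Xi(t)=k,\ T>t\}=\{T(k-1)<t\le T(k)\}\cap\{\bar T(k)>t-T(k)\text{-type}\}$; more simply, condition on the value $k=\Xi(t)$ and note $\{\Xi(t)=k\}\subset\{T>t\}$, so $\P(\Xi(t)\le x,\ T>t)=\sum_{k=\Xi(0)}^x \P(\Xi(t)=k)$. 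Each term is a finite convolution; using Lemma \ref{lemma: zhu} for $T(k)$ and the memorylessness of $\tau(k+1)$, $\P(\Xi(t)=k)=\P(T(k-1)<t\le T(k))$ is an explicit linear combination of exponentials $e^{-F(j)t}$, $j\le k$, and as $t\to\infty$ the dominant exponent is $F(\Xi(0))$ (smallest rate, using strict monotonicity — if $F$ is increasing, $F(\Xi(0))$ is the minimum; if decreasing, one replaces $\Xi(0)$ by the ``last'' index, but in our birth-process setup $F$ increasing is the relevant case for \eqref{eq: explosion}). So $\P(\Xi(t)=k)\sim c_k e^{-F(\Xi(0))t}$ and likewise $\P(T>t)\sim c\, e^{-F(\Xi(0))t}$ from \eqref{eq: Ttail}, and the ratio $\P(\Xi(t)\le x\mid T>t)\to \big(\sum_{k=\Xi(0)}^x c_k\big)/c$.

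The slightly cleaner probabilistic way to identify this limiting constant — and to see that it equals $\prod_{k=x+1}^\infty(1-F(\Xi(0))/F(k))$ — is to decompose $T=\tau(\Xi(0))+\bar T(\Xi(0))$ and size-bias. Concretely, $\{\Xi(t)\le x\}=\{T(x)>t\}=\{\tau(\Xi(0))+\cdots+\tau(x)>t\}$, so
\[
\P(\Xi(t)\le x\mid T>t)=\frac{\P(T(x)>t)}{\P(T>t)}=\frac{\P(T(x)>t)}{\P(T(x)+\bar T(x)>t)}\,.
\]
Both numerator and denominator are tail probabilities of sums of independent exponentials with distinct rates; the numerator $\P(T(x)>t)\sim A_x e^{-F(\Xi(0))t}$ with $A_x=\prod_{l=\Xi(0)+1}^x F(l)/(F(l)-F(\Xi(0)))$ (partial-fraction leading coefficient, exactly as in the $n=\infty$ computation behind \eqref{eq: Ttail}), while the denominator $\P(T>t)\sim A_\infty e^{-F(\Xi(0))t}$ with $A_\infty=\prod_{l=\Xi(0)+1}^\infty F(l)/(F(l)-F(\Xi(0)))$. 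Hence the limit is $A_x/A_\infty=\prod_{l=x+1}^\infty (F(l)-F(\Xi(0)))/F(l)=\prod_{k=x+1}^\infty(1-F(\Xi(0))/F(k))$, which lies strictly in $(0,1)$ because the product converges (by \eqref{eq: explosion}, $\sum 1/F(k)<\infty$) and every factor is in $(0,1)$. Subtracting from $1$ gives the claimed formula for $\P(\Xi(t)>x\mid T>t)$.

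For the second assertion, write $R\coloneqq T-\tau(\Xi(0))=\bar T(\Xi(0))=\sum_{l\ge\Xi(0)+1}\tau(l)$, independent of $\tau(\Xi(0))$. Then
\[
\P(R>s,\ T>t)=\P(R>s,\ \tau(\Xi(0))+R>t)\le \P(R>s)\ \text{(crude)},
\]
but that is not enough; instead condition on $R$: $\P(R>s,\ T>t)=\E\big[\mathbf 1\{R>s\}\,\P(\tau(\Xi(0))>t-R\mid R)\big]$, and since $\tau(\Xi(0))\sim\mathrm{Exp}(F(\Xi(0)))$ this is $\E[\mathbf 1\{R>s,\ R\le t\}e^{-F(\Xi(0))(t-R)}]+\P(R>t\vee s)$. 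Dividing by $\P(T>t)\sim A_\infty e^{-F(\Xi(0))t}$ and letting $t\to\infty$ (dominated convergence, using that $\E e^{F(\Xi(0))R}<\infty$ — which holds since $R=\bar T(\Xi(0))$ has all rates $>F(\Xi(0))$, again by strict monotonicity, so its MGF is finite at $F(\Xi(0))$, this being exactly the content of the convergent product $A_\infty$) yields
\[
\P(R>s\mid T>t)\ \longrightarrow\ \frac{\E[\mathbf 1\{R>s\}\,e^{F(\Xi(0))R}]}{\E[e^{F(\Xi(0))R}]}\,,
\]
i.e. $R$ under the conditioning converges in distribution to its exponential tilt by $F(\Xi(0))$. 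It remains to bound this tilted tail by $e^{-(F(\Xi(0)+1)-F(\Xi(0)))s}$. Under the tilt, $R$ is still a sum of independent exponentials, now with rates $F(l)-F(\Xi(0))$ for $l\ge \Xi(0)+1$; the slowest of these is $F(\Xi(0)+1)-F(\Xi(0))$, so a Markov/Chernoff bound at the exponent $F(\Xi(0)+1)-F(\Xi(0))$ (or simply the standard tail estimate for such a sum) gives $\P_{\mathrm{tilt}}(R>s)\prec e^{-(F(\Xi(0)+1)-F(\Xi(0)))s}$, uniformly in $s$. I expect the main technical point to be making the interchange of limits ($t\to\infty$ with the $s$-uniformity) rigorous: one must control $\P(R>s,\ T>t)/\P(T>t)$ by a single integrable dominating function independent of $t$ and $s$, for which the finiteness of the tilted MGF $\E e^{F(\Xi(0))R}=A_\infty<\infty$ is exactly what is needed, and then the ``$\prec$ uniformly in $s$'' is just the Chernoff bound on the limiting tilted law plus a uniform-in-$t$ error term from the pre-limit.
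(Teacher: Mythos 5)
Your proof is correct, and for the first claim it is essentially the paper's argument: both reduce $\P(\Xi(t)\le x\mid T>t)=\P(T(x)>t)/\P(T>t)$ to the ratio of the leading coefficients of $e^{-F(\Xi(0))t}$ coming from the partial-fraction formula of Lemma \ref{lemma: zhu}; the paper passes to the ratio of densities $g_x(t)/g(t)$ via de l'Hospital, while you use the tail asymptotics directly — the same computation. For the second claim your route differs in framing, though not in substance: the paper conditions on $\tau(\Xi(0))=u$, writes $\P(\bar T(\Xi(0))>\max\{t-u,s\})$, and splits the integral at $u=t-s$, bounding each piece with $\P(\bar T(\Xi(0))>v)\prec e^{-F(\Xi(0)+1)v}$ and $\P(T>t)\asymp e^{-F(\Xi(0))t}$; you condition on $R=\bar T(\Xi(0))$ instead and recognize the resulting expression $e^{-F(\Xi(0))t}\,\E[\mathbf 1\{s<R\le t\}e^{F(\Xi(0))R}]+\P(R>t\vee s)$ as an exponential tilt. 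The two are the same Fubini decomposition of $\{\tau(\Xi(0))+R>t,\,R>s\}$ read in different orders, and both ultimately rest on the finiteness of $\prod_{l>\Xi(0)}F(l)/(F(l)-F(\Xi(0)))=\E e^{F(\Xi(0))R}$. What your version buys is a sharper conclusion than the paper states — the exact limit law of $T-\tau(\Xi(0))$ given $T>t$ is the tilted law of $R$, a sum of independent exponentials with rates $F(l)-F(\Xi(0))$ — from which the claimed uniform bound follows by the standard tail estimate at the minimal rate $F(\Xi(0)+1)-F(\Xi(0))$ (note the plain Chernoff bound at that exact rate fails because the corresponding factor of the moment generating function blows up; you need to peel off the slowest exponential first, as you indicate). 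The uniformity in $s$ and $t$ that you flag as the remaining technical point is handled exactly as you sketch: $e^{-F(\Xi(0))t}/\P(T>t)$ is bounded for $t$ large, and the remainder $\P(R>t\vee s)/\P(T>t)$ is $\prec e^{-(F(\Xi(0)+1)-F(\Xi(0)))s}$ in both cases $s\le t$ and $s>t$.
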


\noindent\textbf{Remarks.}
\begin{itemize}
    \item The so-called \textit{quasi-limiting distribution} $\nu_{\Xi (0)}^\infty :=\lim\limits_{t\to\infty} \P (\Xi (t)\in .\mid T>t)$ on $\{\Xi (0),\Xi (0)+1 ,\ldots\}$ has been studied in the context of \textit{Yaglom limits} and \textit{quasi-stationary distributions} for general Markov processes and in particular also for birth-death processes \cite{Cavender1978,Martinez1993}. In this limiting distribution, we can easily recover the tail $x\to\infty$ from Theorem \ref{thm: birthMain}
\begin{align*}
    \lim_{t\to\infty} \P(\Xi(t)>x\,|\,T>t)&=1-\prod_{k=x+1}^\infty\left(1-\frac{F(\Xi(0))}{F(k)}\right)=1-\exp\left\{\sum_{k=x+1}^\infty\log\left(1-\frac{F(\Xi(0))}{F(k)}\right)\right\}\\
    &\sim 1-\exp\left\{-F(\Xi(0))\sum_{k=x+1}^\infty\frac{1}{F(k)}\right\}\sim F(\Xi(0))\sum_{k=x+1}^\infty\frac{1}{F(k)}\quad\mbox{as }x\to\infty\ .
\end{align*}
Note that 
$$\lim_{x\to\infty} 1-\prod_{k=x+1}^\infty\left(1-\frac{F(\Xi(0))}{F(k)}\right)=0\,.$$
Hence, $\nu_{\Xi (0)}^\infty$ is a normalized probability distribution on $\{\Xi (0),\Xi (0)+1 ,\ldots\}$, in particular there is no probability mass in $x=\infty$. We give explicit examples for this distribution with power-law tails below.
    
    \item If a birth-death process is irreducible except for one absorbing state (usually the state $0$) whose hitting time has exponential moments, then quasi-limiting distributions are also quasi-stationary, i.e. do not change under the time evolution conditioned on non-absorption. Since we consider a pure birth process where all states are transient those results do not apply here. In particular, the quasi-limiting distributions $\nu_{\Xi (0)}^\infty$ depend on the initial condition and are not quasi-stationary. In fact, there are no quasi-stationary distributions for our process. Due to transience of all states, the only possible candidates would be $\delta_{\Xi (0)}$, but with Proposition \ref{prop: ttoinfty} $\nu_{\Xi (0)}^\infty (\Xi (0)) <1$ for all feedback functions.
    
    \item But how does the event of non-explosion at some late time typically occur? According to Proposition \ref{prop: ttoinfty}, the time $\tau(\Xi(0))$ is of the same order as the explosion time $T$. Moreover, since the tail of $T$ decays exponentially, $T-t$ has an exponential tail as well on the event $\{T>t\}$. Hence, conditioned on  $\{T>t\}$ for large $t$, the first jump of the process happens close to $t$ up to a light tailed distribution (asymptotically independent of $t$), i.e.
$$\P(|\tau(\Xi(0))-t|>s\,|\,T>t)\prec  e^{-ms}\quad\text{as }t\to\infty$$
uniformly in $s$, where $m\coloneqq \min\{F(\Xi(0)), F(\Xi(0)+1)-F(\Xi(0))\}>0$. In particular this implies $\P (\Xi (\alpha t)=\Xi (0)|T>t )\to 1$ as $t\to\infty$ for all $\alpha <1$. Nevertheless, there might be further jumps between the first one at $\tau(\Xi(0))$ and the observation time $t$ with non-vanishing probability. Note that for $ F(\Xi(0)+1)\approx F(\Xi(0))$ there can be several long sojourn times when $T>t$.\\
\end{itemize}

\begin{proof}[Proof of Proposition \ref{prop: ttoinfty}]
    Due to the strict monotonicity of $F$ and Lemma \ref{lemma: zhu}, it is easy to see that 
    \begin{equation*}\label{eq: gkasmpt}
        g_k(t)\sim F(\Xi(0))\left(\prod_{l=\Xi(0)+1}^k\frac{F(l)}{F(l)-F(\Xi(0))}\right)e^{-F(\Xi(0))t}\quad\text{for }t\to\infty
    \end{equation*}
    for $k\ge\Xi(0)$ including $k=\infty$ with $g_\infty=g$. Then the first claim follows via de l'Hospital's Theorem:
    \begin{align*}
        \P(\Xi(t)>x\,|\,T>t)&=1-\P(\Xi(t)\le x\,|\,T>t)=1-\frac{\P(\Xi(t)\le x)}{\P(T>t)}=1-\frac{\P( T(x)>t)}{\P(T>t)}\sim 1-\frac{g_x (t)}{g(t)}\\
        &\xrightarrow{t\to\infty}1-\frac{ F(\Xi(0))\left(\prod_{k=\Xi(0)+1}^x\frac{F(k)}{F(k)-F(\Xi(0))}\right)}{ F(\Xi(0))\left(\prod_{k=\Xi(0)+1}^\infty\frac{F(k)}{F(k)-F(\Xi(0))}\right)}=1-\prod_{k=x+1}^\infty\left(1-\frac{F(\Xi(0))}{F(k)}\right)
    \end{align*}
    As in (\ref{eq: Ttail}), we can characterize the tail distribution of $T$ and $\bar T(\Xi(0))$. This allows the following asymptotic calculation:
    \begin{align*}
         \P&(T-\tau(\Xi(0))>s\,|\, T>t)= \frac{\P(\bar T(\Xi(0))>s,\, T>t)}{\P(T>t)}= \frac{ \P(\bar T(\Xi(0))>\max\{t-\tau(\Xi(0)), s\})}{\P(T>t)}\\
         &=\frac{1}{\P(T>t)}\int_0^\infty \P(\bar T(\Xi(0))>\max\{t-u, s\})F(\Xi(0))e^{-F(\Xi(0))u} du\\
         &=\frac{F(\Xi(0))}{\P(T>t)}\left(\int_0^{t-s} \P(\bar T(\Xi(0))>t-u)e^{-F(\Xi(0))u}du+\int_{t-s}^\infty \P(\bar T(\Xi(0))>s)e^{-F(\Xi(0))u}du\right)\\
         &\le \frac{F(\Xi(0))}{\P(T>t)}\left(const. \int_0^{t-s} e^{-F(\Xi(0)+1)(t-u)}e^{-F(\Xi(0))u}du+const. e^{-F(\Xi(0)+1)s}e^{-F(\Xi(0))(t-s)}\right)\\
         &\sim \frac{const.}{e^{-F(\Xi(0))t}}\left(e^{-F(\Xi(0)+1)t}\left(1-e^{(F(\Xi(0)+1)-F(\Xi(0)))(t-s)}\right)+ e^{-F(\Xi(0)+1)s}e^{-F(\Xi(0))(t-s)}\right)\\
         &\sim const.\left( e^{-(F(\Xi(0)+1)-F(\Xi(0)))s} +e^{-(F(\Xi(0)+1)-F(\Xi(0)))s}\right)\quad\text{for }t\to\infty
    \end{align*}
    Note that $const.$ is independent of $s$ and $t$.
\end{proof}

\begin{example}
    For $F(k)=k^\beta$ with $\beta\in\{2, 3,\ldots\}$, we have the general formula
    $$\prod_{k=x+1}^\infty\left(1-\frac{F(\Xi(0))}{F(k)}\right)=\Gamma(x+1)^\beta\prod_{k=0}^{\beta-1}\frac{1}{\Gamma(x+1-\Xi(0)e^{\frac{2\pi i k}{\beta}})}\,,$$
    where $\Gamma$ denotes the Gamma function. Using $\Gamma(k)=(k-1)!$, we get for $\beta=2$:
    $$\lim_{t\to\infty}\P(\Xi(t)>x\,|\,T>t)=\frac{\Gamma (x+1)}{\Gamma (x+1+\Xi (0))}=\prod_{k=1}^{\Xi (0)}\frac{1}{x+k}$$
    In particular, for $x=\Xi (0)$ we get for the probability that the chain has moved at all
    $$
    \lim_{t\to\infty}\P(\Xi(t)>\Xi (0)\,|\,T>t) =\frac{\Xi (0)!}{(2\Xi (0))!}
    $$
    which vanishes for large initial condition $\Xi (0)\to\infty$. On the other hand, for 
    $\Xi (0)=1$ we have
    $$
    \lim_{t\to\infty}\P(\Xi(t)>x\,|\,T>t) =\frac{1}{x+1} =\frac12 \quad\mbox{for }x=\Xi (0)\ .
    $$
    For $\beta\ne2$, $\beta>1$ and $\Xi (0)=1$ considering the increments yields that the approximation
     $$\lim_{t\to\infty}\P(\Xi(t)>x\,|\,T>t)\approx\frac{1}{(\beta-1)(x+1)^{\beta-1}}$$
     is accurate even for small $x$, but not precise. The good precision of this approximation for large $t$ is visible in Figure \ref{figure: birthSimulation}. 
\end{example}

The discussion in this section does not crucially rely on strict monotonicity of $F$ and we assumed it for simplicity of the presentation and since it is quite natural in many applications. The results could be extended directly to feedback functions with a unique minimum, where the non-exploding chain will spend most of its time.

\section{Application to generalized Pólya urns}\label{sec: applicatonPolya}

The generalized Pólya urn model with $A\in\{2, 3, \ldots\}$ agents is formally defined as a discrete-time Markov process $(X(n))_{n\in\N_0}=((X_1(n),\ldots,X_A(n)))_{n\in\N_0}$ on the state space $\N^A$ with transition probabilities given by 
$$\P\left(X(n+1)-X(n)=e^{(i)}\,|\, X(n)\right)=\frac{F_i(X_i(n))}{F_1(X_1(n))+\ldots F_A(X_A(n))}$$
and initial configuration $X(0)\in\N^A$, where $F_i\colon \N\to(0, \infty)$ is called \textit{feedback function} of agent $i\in[A]$. Here, $e^{(i)}\coloneqq (\delta_{i, j})_{j\in[A]}$ denotes the $i$-th unit vector, and in each time-step one unit enters the system and is won by agent $i$ with probability proportional to the feedback function $F_i(X_i(n))$. An extensive analysis of this model can be found in \cite{wir} and the references therein.

The so-called \textit{exponential embedding} (see e.g. \cite{Oliveira} an references therein) provides a useful alternative construction of this process. For that, we take $A$ independent birth processes $(\Xi_i(t))_{t\ge0}$ with rate function $F_i$ and initial value $\Xi_i(0)=X_i(0)$ as defined in the introduction. We denote the sojourn times of $\Xi_i$ by $\tau_i(k)$, which are independent and exponentially distributed with parameter $F_i(k)$. Moreover, denote by $(t_n)_n$ the random sequence of jump times of the combined process $t\mapsto\Xi (t)=\big(\Xi_i (t):i\in [A]\big)$, i.e.
$$t_{n+1}\coloneqq\inf\{t>t_n\colon \Xi(t)\ne\Xi(t_n)\}\quad\text{with }t_0\coloneqq0\,.$$
Then Rubin's Theorem states that the discrete process $(X(n))_{n\in\N_0}\coloneqq(\Xi(t_n))_{n\in\N_0}$ describes a generalized Pólya urn as defined above. In the rest of the paper we will always use this construction and $\P$ denotes the law of $A$ independent birth processes with rate functions $F_1 ,\ldots ,F_A$.

An important event is the occurrence of \textit{strong monopoly}, where one agent wins all but finitely many steps of the process:
$$sMon_i\coloneqq\{\omega\in\Omega\colon \exists N\ge0\forall n\ge N\,,\, j\ne i\colon X_j(n)=X_j(N)\}$$
Against the background of the exponential embedding, it is easy to see (see \cite{wir} and references therein for details) that strong monopoly occurs with probability one if and only if at least one feedback function fulfills (\ref{eq: explosion}). In the following we will always assume that this is the case. Moreover, $\P(sMon_i)>0$ if and only if $F_i$ satisfies (\ref{eq: explosion}).

\subsection{Asymptotic size of losing agents}

Conditioned on agent $i$ losing, i.e.\ not to be the monopolist, $X_i(\infty)\coloneqq\lim_{n\to\infty}X_i(n)$ is a finite random variable, whose tail distribution can be computed with Corollary \ref{cor: birthMain}. Denote by 
$$T_i\coloneqq \sup\{t>0\colon \Xi(t)<\infty\}\in(0, \infty]$$
the (random) explosion time of $\Xi_i$ and if $F_i$ satisfies (\ref{eq: explosion}), denote by 
$$g_i(t)\coloneqq \frac{d}{dt}\P(T_i\le t)$$
the density of $T_i$. This leads to our second main result, which generalizes previous results in \cite{Cotar, Oliveira, Zhu}.

\begin{theorem}\label{thm: loserSuperlin}
    Assume that at least two agents fulfill (\ref{eq: explosion}) and let agent $i\in[A]$ be one of them. Then as $x\to\infty$
    $$\P\left(X_i(\infty)=x\right)\sim \frac{\E g_i\left(\min_{j\ne i}T_j\right)}{F_i(x)}\quad\text{and}\quad P(X_i(\infty) > x|sMon_i^c )\sim \frac{\E g_i\left(\min_{j\ne i}T_j\right)}{\P (T_i >\min_{j\neq i}T_j)}\sum_{k=x+1}^\infty \frac{1}{F_i (k)}\ .$$
\end{theorem}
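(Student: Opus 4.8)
The plan is to reduce Theorem \ref{thm: loserSuperlin} to Corollary \ref{cor: birthMain} via the exponential embedding. Under this construction, $X_i(\infty)$ equals the value of the birth process $\Xi_i$ at the random time when agent $i$ stops winning. Since strong monopoly holds almost surely and agent $i$ is a loser precisely on $sMon_i^c$, the key observation is that agent $i$'s count freezes exactly at the moment the winning agent explodes. More precisely, let $S\coloneqq\min_{j\ne i}T_j$ be the first explosion time among the other agents. On the event $\{T_i>S\}$, agent $i$ does not explode before time $S$, so some agent $j_0\ne i$ with $T_{j_0}=S$ explodes first; after that moment $j_0$ wins every step (the combined jump-time sequence $(t_n)$ accumulates at $S$), so $X_i$ never increases again and $X_i(\infty)=\Xi_i(S)$. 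Conversely, on $\{T_i<S\}$ agent $i$ is the monopolist and $X_i(\infty)=\infty$. Hence $\{X_i(\infty)=x\}=\{\Xi_i(S)=x\}$ for every finite $x$, and $sMon_i^c=\{T_i>S\}$ up to a null set (ties among explosion times have probability zero since each $T_j$ has a density).

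First I would make the embedding statement precise: cite Rubin's theorem as stated in the excerpt to identify $(X(n))=(\Xi(t_n))$, then argue carefully that the jump times $t_n$ converge to $S$ on $\{T_i>S\}$ and that $\Xi_i$ has no jump in the interval $(S-\varepsilon,S)$ for small $\varepsilon$ with probability tending to $1$ — actually the clean way is: since $\Xi_i(S)<\infty$ and $\Xi_i$ is right-continuous and non-decreasing with $\Xi_i(t)\to\infty$ only as $t\to T_i>S$, we have $\sup_n X_i(n)=\lim_n\Xi_i(t_n)=\Xi_i(S^-)=\Xi_i(S)$ because $t_n\uparrow S$, and $S$ is a.s.\ not a jump time of $\Xi_i$ (it is a jump time of some $\Xi_j$, independent of $\Xi_i$, which has continuous distribution). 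Second I would apply Corollary \ref{cor: birthMain} to the birth process $\Xi_i$ with rate function $F_i$ (which satisfies \eqref{eq: explosion} by hypothesis) and random observation time $S=\min_{j\ne i}T_j$, which is independent of $\Xi_i$ because the $A$ birth processes are independent and $S$ depends only on $\{\Xi_j:j\ne i\}$. Corollary \ref{cor: birthMain} then gives directly
\begin{equation*}
\P(\Xi_i(S)=x)\sim\frac{\E g_i(S)}{F_i(x)}\quad\text{and}\quad\P(\Xi_i(S)>x\mid T_i>S)\sim\frac{\E g_i(S)}{\P(T_i>S)}\sum_{k=x+1}^\infty\frac{1}{F_i(k)}\,,
\end{equation*}
and translating through $\{X_i(\infty)=x\}=\{\Xi_i(S)=x\}$ and $sMon_i^c=\{T_i>S\}$ yields the theorem, noting $\P(T_i>\min_{j\ne i}T_j)=\P(T_i>S)$.

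The only genuine subtlety — and the step I would spend the most care on — is the identification $X_i(\infty)=\Xi_i(S)$ and $sMon_i^c=\{T_i>S\}$ up to null sets. One must confirm that on $\{T_i>S\}$ the combined process truly accumulates infinitely many jumps at $S$ (so $(X(n))$ is well-defined for all $n$ and $t_n\uparrow S$), which follows because the exploding agent $j_0$ contributes infinitely many sojourn times summing to $S<\infty$ while $\Xi_i$ contributes only finitely many jumps before $S$; and that $\P(T_i=S)=0=\P(T_j=T_{j'})$ for $j\ne j'$ among the exploding agents, which holds since each $T_j$ (for $j$ satisfying \eqref{eq: explosion}) has a density by Lemma \ref{lemma: gProperties} and they are independent, while $T_j=\infty$ a.s.\ for non-exploding agents. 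Everything else is a direct invocation of Corollary \ref{cor: birthMain}, using independence of $\Xi_i$ from $(\Xi_j)_{j\ne i}$ to guarantee the independence hypothesis on the randomized observation time.
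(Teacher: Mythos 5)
Your proposal is correct and follows the same route as the paper: the paper's proof is exactly the reduction to Corollary \ref{cor: birthMain} via the identifications $\{X_i(\infty)=x\}=\{\Xi_i(S)=x\}$ and $sMon_i^c=\{T_i>S\}$ with $S=\min_{j\ne i}T_j$ independent of $\Xi_i$. The extra care you take in justifying these identifications (accumulation of jump times at $S$, absence of ties) is a sound elaboration of what the paper treats as immediate; the only detail the paper adds that you omit is the remark that $\E g_i(S)\in(0,\infty)$ because $g_i$ is bounded and at least one $T_j$ is finite.
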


\begin{proof}
    This is an immediate consequence of Corollary \ref{cor: birthMain} since $S\coloneqq\min_{j\ne i}T_j$ is independent of $(\Xi_i (t))_{t\geq 0}$ and
    $$\{X_i(\infty)=x\}=\{\Xi_i(S)=x\}\quad\mbox{as well as}\quad sMon_i^c =\{ T_i >S\}\ .$$
    Note that $\E g_i\left(\min_{j\ne i}T_j\right)\in(0, \infty)$ as $g_i$ is bounded (Lemma \ref{lemma: gProperties}) and at least one $T_j$ is finite.
\end{proof}

\begin{figure}
  \centering
  \subfloat[$F_1(k)=k^2$, $F_2(k)=k^3$]{\includegraphics[width=0.5\linewidth]{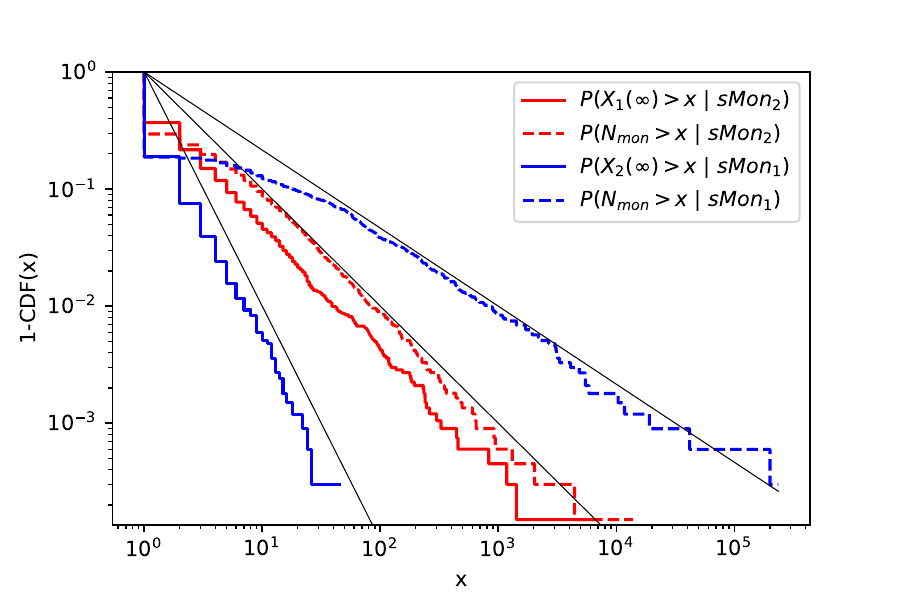}}
  \subfloat[$F_1(k)=k^2$, $F_2(k)=e^k$]{\includegraphics[width=0.5\linewidth]{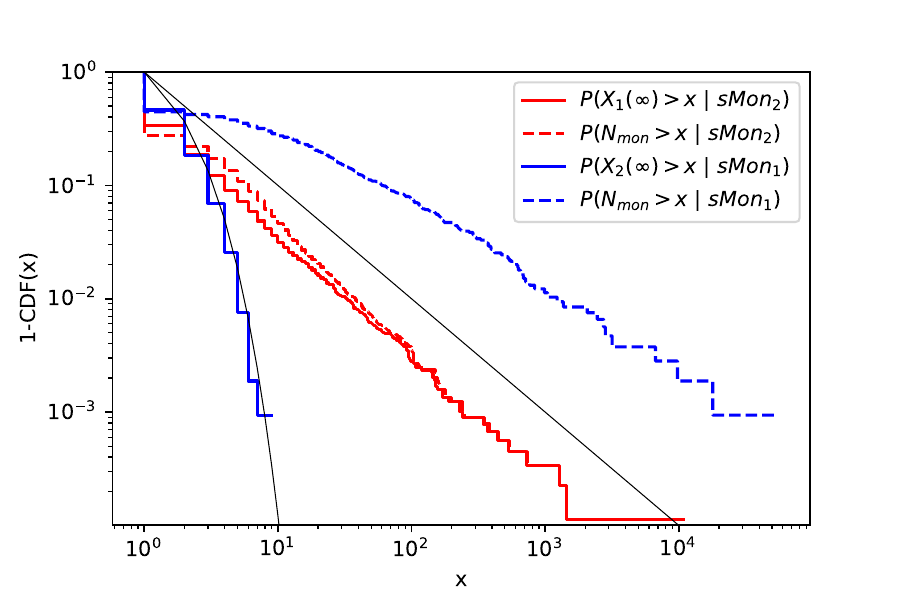}}
  \caption{Empirical distribution of $X_i(\infty)$ and $N_{mon}$ (\ref{eq: MonTimeDef}) for $A=2$ and $X_1(0)=X_2(0)=1$. The black lines show the tail decay predicted by Theorem \ref{thm: loserSuperlin} and Corollary \ref{cor: MonTimeSuperlin}. Note that these predicted tails do only claim exactness  up to constants, i.e. up to a parallel shift in the plot. 10,000 simulations were executed each.}
  \label{figure: LoserSimulation}
\end{figure}

  Figure \ref{figure: LoserSimulation} illustrates Theorem \ref{thm: loserSuperlin} for power-law feedback. Remarkably, the tail of $X_i(\infty)$ conditioned on $sMon_i^c$ does not depend on the feedback functions of other agents up to a constant prefactor. In particular, $X_i(\infty)$ has a power-law tail even if there is another agent with exponential feedback. For the existence of moments of $X_i(\infty)$ conditioned on $sMon_i^c$, criterion (\ref{eq: moments}) holds analogously. As explained in Example \ref{example: birth}, $\P(X_i(\infty)>x\,|\,sMon_i^c)$ has a power-law tail for polynomially increasing $F_i$, whereas it has exponential tail for exponentially increasing $F_i$. For $F_i(k)=k(\log k)^\beta$ with $\beta>1$, the tail decay is even logarithmically slow.\\

\noindent\textbf{Remarks.}
\begin{itemize}
    \item Theorem \ref{thm: loserSuperlin} implies that the tail of $X_i(\infty)$ conditioned on $sMon_i^c$ is heavier the weaker the feedback is. This means that among agents with feedback satisfying \eqref{eq: explosion}, losers with slowly increasing $F_i$ are more likely to win in many steps than other losers with stronger feedback. In analogy to the same phenomenon for the single birth process, if an agent with feedback satisfying \eqref{eq: explosion} wins in many steps, then they are likely to be the monopolist. On the other hand, if this agent is not the monopolist, then they are likely to have won only a few steps.

\item Let us now consider a system, where at least one agent fulfills (\ref{eq: explosion}), but not agent $i$. Then the process exhibits strong monopoly, but $\P(sMon_i)=0$ and hence $\P(X_i(\infty)<\infty)=1$ and agent $i$ loses. Since such agents are not affected by conditioning on $sMon_i^c$, via canonical coupling it is obvious that $X_i(\infty)$ is stochastically larger the faster $F_i$ increases. So among agents with feedback that does not satisfy \eqref{eq: explosion} those with stronger feedback are likely to win more steps.

\item Summarizing both cases, the tail of $X_i(\infty)$ for a losing agent (possibly conditioning on $sMon_i^c$) is heavier for feedback functions closer to the transition given by \eqref{eq: explosion}. For a better understanding of this tail-maximising phenomenon, a detailed discussion of the tail behaviour close to the transition (\ref{eq: explosion}) is provided in Section \ref{sec: LoserSublin}.
\end{itemize}

In general, the prefactor $\E g_i\left(\min_{j\ne i}T_j\right)$ in Theorem \ref{thm: loserSuperlin} is decreasing in $A$, i.e. the probability that a loser wins many steps is smaller in larger systems. Moreover, this prefactor vanishes for $A\to\infty$, if infinitely mans agents satisfy (\ref{eq: explosion}). This complies with the idea that in large systems a typical agent does not even win a single step (see Appendix \ref{sec: manyAgents}). Surprisingly, large systems behave totally different if only one agent satisfies (\ref{eq: explosion}), such that this agent is almost surely the monopolist. In this case,  the distribution of $X_i(\infty)$, is independent of the number of other agents as a direct consequence of the exponential embedding. Hence, in a world with super-linear  (polynomial) feedback and many agents, there will be a few agents with high wealth, but the vast majority will not win anything, whereas in a world with one dominant agent (the super-linear one) all sub-linear losers have good chances to win some steps.

\subsection{Correlations among losing agents}\label{subsec: loserCor}

Next, we have a closer look at the dependencies of several losing agents. Note that the correlation of $X_i(\infty)$ and $X_j(\infty)$ on the conditioned space $sMon_i^c\cap sMon_j^c$ does in general not exist due to infinite second moments, e.g. for $F_i(k)=k^2$.

\begin{theorem}\label{thm: LoserCorrelation}
     Let $a\in[A-1]$. Assume that at least $a+1$ agents satisfy (\ref{eq: explosion}) and that agents $1,\ldots, a$ are among them. Set $S\coloneqq \min\{T_i\colon i=a+1,\ldots, A\}$ and $S_i\coloneqq \min\{T_j\colon j\in[A]\backslash\{i\}\}$. Then:
    \begin{align*}
          \P&\left(X_1(\infty)= x_1,\ldots, X_a(\infty)= x_a \right)\sim\frac{\E\left[\prod_{i=1}^a g_i(S)\right]}{\prod_{i=1}^a\E g_i(S_i)}\cdot\prod_{i=1}^a\P\left(X_i(\infty)= x_i\right)\quad\text{for }x_1,\ldots, x_a\to\infty
     \end{align*}
\end{theorem}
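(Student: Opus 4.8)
The plan is to mimic the proof of Theorem \ref{thm: loserSuperlin}, but now working jointly with $a$ independent birth processes $\Xi_1 ,\ldots ,\Xi_a$. The key structural observation is that, by the exponential embedding, the random variable $S=\min\{T_i:i=a+1,\ldots,A\}$ is independent of the $a$-tuple $(\Xi_1 ,\ldots ,\Xi_a)$, and on the event that agents $1,\ldots,a$ all lose we have $X_i(\infty)=\Xi_i(S)$ simultaneously for $i\in[a]$. Conditioning on the value of $S$ and using independence of the $a$ birth processes gives
\begin{align*}
    \P\left(X_1(\infty)=x_1,\ldots,X_a(\infty)=x_a\right)=\int_0^\infty \prod_{i=1}^a \P(\Xi_i(s)=x_i)\,d\P_S(s)\,.
\end{align*}
So the first step is to establish a multivariate version of Corollary \ref{cor: birthMain}: for a positive random variable $S$ independent of independent birth processes $\Xi_1 ,\ldots ,\Xi_a$ satisfying (\ref{eq: explosion}),
\begin{align*}
    \P\left(\Xi_1(S)=x_1,\ldots,\Xi_a(S)=x_a\right)\sim \E\!\left[\prod_{i=1}^a g_i(S)\right]\prod_{i=1}^a\frac{1}{F_i(x_i)}\quad\text{as }x_1 ,\ldots ,x_a\to\infty\,.
\end{align*}

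To prove this, I would split the integral over $s$ at a small threshold $\epsilon$, exactly as in the proof of Corollary \ref{cor: birthMain}. On $(\epsilon,\infty)$, apply Theorem \ref{thm: birthMain} to each factor $\P(\Xi_i(s)=x_i)\sim g_i(s)/F_i(x_i)$; the uniformity in $s>\epsilon$ (for each fixed $i$) lets us pull the product of asymptotics through the integral and then send $\epsilon\to0$ to recover $\E[\prod_i g_i(S)]\prod_i 1/F_i(x_i)$, using that $\prod_i g_i$ is bounded (Lemma \ref{lemma: gProperties}) so dominated convergence applies. On $[0,\epsilon)$, bound each factor by the globally uniform estimate of Corollary \ref{cor: uniformBound}, $\P(\Xi_i(s)=x_i)\prec g_{i,x_i}(s)/F_i(x_i)\le (\text{const})/F_i(x_i)$ uniformly, so that the whole $[0,\epsilon)$ contribution is $\prec \P_S([0,\epsilon))\prod_i 1/F_i(x_i)$, which is negligible after $\epsilon\to0$. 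This is the same two-region argument as in the scalar case, just with a product over $i$; the only point requiring a little care is that the asymptotic constant for the product is genuinely the product of the per-coordinate factors, i.e. the cross terms vanish — but this is immediate because the different birth processes are independent, so $\P(\Xi_1(s)=x_1,\ldots,\Xi_a(s)=x_a)=\prod_i\P(\Xi_i(s)=x_i)$ for each fixed $s$ before integrating.

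Finally, to pass from the multivariate Corollary \ref{cor: birthMain} statement to the form in Theorem \ref{thm: LoserCorrelation}, I divide by $\prod_{i=1}^a \P(X_i(\infty)=x_i)$. By Theorem \ref{thm: loserSuperlin}, $\P(X_i(\infty)=x_i)\sim \E g_i(S_i)/F_i(x_i)$ with $S_i=\min\{T_j:j\in[A]\setminus\{i\}\}$, so $\prod_i \P(X_i(\infty)=x_i)\sim \prod_i \E g_i(S_i)\cdot\prod_i 1/F_i(x_i)$, and the $\prod_i 1/F_i(x_i)$ cancels against the numerator, leaving exactly the claimed constant $\E[\prod_i g_i(S)]/\prod_i \E g_i(S_i)$. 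One should also note that $\E[\prod_i g_i(S)]\in(0,\infty)$ since $\prod_i g_i$ is bounded and $S<\infty$ a.s. (at least one of agents $a+1,\ldots,A$ has finite explosion time). The main obstacle is really just the bookkeeping in the multivariate two-region estimate — in particular making sure the uniformity-in-$s$ statements from Theorem \ref{thm: birthMain} and the global bound from Corollary \ref{cor: uniformBound} are strong enough to justify interchanging limit and integral when applied factor-by-factor; everything else is a routine repackaging of the scalar proof.
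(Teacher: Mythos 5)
Your proposal is correct and follows essentially the same route as the paper: express the joint event via the exponential embedding as $\int_0^\infty\prod_i\P(\Xi_i(s)=x_i)\,d\P_S(s)$, apply Theorem \ref{thm: birthMain} together with the uniform bound of Corollary \ref{cor: uniformBound} via the two-region split from the proof of Corollary \ref{cor: birthMain}, and then normalize by the marginals using Theorem \ref{thm: loserSuperlin}. The paper's proof is exactly this argument, stated more tersely.
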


\begin{proof}
Denote by $\P_S$ the law of $S$. Using the independence property of the exponential embedding, we get:
    \begin{align*}
         \P&\left(X_1(\infty)= x_1,\ldots, X_a(\infty)= x_a \right)=\P\left(\Xi_1(S)= x_1,\ldots, \Xi_a(S)= x_a \right)\\
         &=\int_0^\infty \P\left(\Xi_1(s)= x_1,\ldots, \Xi_a(s)= x_a \right)\,d\P_S(s)=\int_0^\infty \prod_{i=1}^a\P\left(\Xi_i(s)= x_i\right)\,d\P_S(s)\\
         &\sim\int_0^\infty \prod_{i=1}^a\frac{g_i(s)}{F_i(x_i)}\,d\P_S(s)=\E\left[\prod_{i=1}^a g_i(S)\right]\prod_{i=1}^a\frac{1}{F_i(x_i)}\quad\text{for }x_1,\ldots, x_a\to\infty
    \end{align*}
    In the last line, we apply Theorem \ref{thm: birthMain} and Corollary \ref{cor: uniformBound} in the same manner as in the proof of Corollary \ref{cor: birthMain}. The claim follows then via Theorem \ref{thm: loserSuperlin}.
\end{proof}

In particular, for polynomial feedback, $(X_1(\infty), X_2(\infty))$ has a two-dimensional heavy tailed distribution on $ sMon_1^c\cap sMon_2^c$ in the sense of \cite{konstantinides}.  Denote by
$$ \P_{A, a}(\cdot)\coloneqq\frac{\P((\cdot)\cap sMon_1^c\cap\ldots sMon_a^c)}{\P(sMon_1^c\cap\ldots sMon_a^c)}$$
the conditioned probability measure on $sMon_1^c\cap\ldots \cap sMon_a^c$. Then we can rephrase Theorem \ref{thm: LoserCorrelation} as
\begin{align}\label{eq: CorAsymp}
           \P_{A, a}\left(X_1(\infty)> x_1,\ldots, X_a(\infty)> x_a  \right)\sim c(A, a)\cdot\prod_{i=1}^a \P_{A, a}\left(X_i(\infty)> x_i\right)\quad\text{for }x_1,\ldots, x_a\to\infty\,,
     \end{align}
where 
     \begin{equation}\label{eq: cFormula}
         c(A, a)\coloneqq\frac{\E\left[\prod_{i=1}^a g_i (S)\right]}{\prod_{i=1}^a\E\left[\left(\prod_{j\in[a]\backslash\{i\}}G_j(S)\right)g_i (S)\right]}\cdot\P\left(sMon_1^c\cap\ldots\cap sMon_a^c \right)^{a-1}\,.
     \end{equation}
    and $G_i(s)\coloneqq\P(T_i>s)$, since for $x_i\to\infty$
    \begin{align}\label{eq: tildeP}
        & \P_{A, a}\left(X_i(\infty)= x_i\right)=\frac{\P(\Xi_i(S)=x_i,\,\forall j\in[a]\colon T_j>S)}{\P(sMon_1^c\cap\ldots sMon_a^c)}\\
        &=\frac{1}{\P(sMon_1^c\cap\ldots sMon_a^c)}\int_0^\infty \P(\Xi_i(s)=x_i,\,\forall j\in[a]\colon T_j>s)\,d\P_S(s)\nonumber\\
         &=\frac{1}{\P(sMon_1^c\cap\ldots sMon_a^c)}\int_0^\infty \P(\Xi_i(s)=x_i)\prod_{j\in[a]\backslash\{i\}}G_j(s)\,d\P_S(s)\nonumber\\
         &\sim\frac{1}{\P(sMon_1^c\cap\ldots sMon_a^c)}\int_0^\infty \frac{g_i(s)}{F_i(x_i)}\prod_{j\in[a]\backslash\{i\}}G_j(s)\,d\P_S(s)\quad\text{for }x\to\infty\nonumber\\
         &=\frac{\E\left[\left(\prod_{j\in[a]\backslash\{i\}}G_j(S)\right)g_i (S)\right]}{\P(sMon_1^c\cap\ldots sMon_a^c)}\cdot\frac{1}{F_i(x_i)}\sim\frac{\E\left[\left(\prod_{j\in[a]\backslash\{i\}}G_j(S)\right)g_i (S)\right]}{\P(sMon_1^c\cap\ldots sMon_a^c)\E g_i(S_i)}\cdot\P(X_i(\infty)=x_i)\nonumber\\
         &=\frac{\E\left[\left(\prod_{j\in[a]\backslash\{i\}}G_j(S)\right)g_i (S)\right]}{\P(sMon_1^c\cap\ldots sMon_a^c)\E g_i(S_i)}\cdot\P(sMon_i^c)\cdot\P(X_i(\infty)=x_i\,|\,sMon_i^c)\nonumber
    \end{align}
i.e. the tail of $X_i(\infty)$ conditioned on $sMon_i^c$ is equal to its tail conditioned on $sMon_1^c\cap\ldots\cap sMon_a^c$ up to a constant. This constant is necessarily one in a fully symmetric situation, since the distribution of $X_i(\infty)$ on $sMon_i^c$ is then independent of who the monopolist is. In a non-symmetric situation, the information who won contains information on the explosion time $S$, which affects the distribution of $X_i(\infty)$. E.g., if the an agent with strong feedback is the monopolist, then $S$ is likely to be small, such that $X_i(\infty)$ is rather small, too. Hence, when $F_1=\ldots=F_A$ and $X_1(0)=\ldots X_A(0)$, then we can simply write
$$c(A, a)=\frac{\E\left[ g(S)^a\right]}{\left(\E g(S)\right)^a}\cdot\frac{\prod_{i=1}^a\P(sMon_i^c)}{\P(sMon_1^c\cap\ldots sMon_a^c)}=\frac{\E\left[ g(S)^a\right]}{\left(\E g(S)\right)^a}\cdot\frac{(A-1)^a}{A^{a-1}(A-a)}\,,$$
where $g=g_i$ and $G=G_i$. The first factor is larger than one due to Jensen's inequality and the second factor is larger than one due to
\begin{equation}\label{eq: bernoulli}
    \frac{(A-1)^a}{A^{a-1}(A-a)}>1\quad\Leftrightarrow\quad \left(1-\frac 1 A\right)^a> 1-\frac a A
\end{equation}
and Bernoulli's inequality. Hence, we have $c(A, a)>1$ in the symmetric case.\\

In general, the dependence between several losers can be considered as weak since the tail weight of a loser does not change when the result of other losers is known. The remaining tail dependence is captured by the constant $c(A, a)$, which we shortly discuss now. Heuristically, the event that a loser wins in many steps can occur in two ways. First, it is possible that the underlying birth process of this agent increases fast. This effect is independent between several losers. Second, it is possible that the explosion time of the winner is relatively late, such that all losers are more likely to win in many steps. This second effect accounts for a slightly positive correlation of losers, corresponding to $c(A, a)>1$. The following proposition formally substantiates this idea also for non-diverging $x_i$.

\begin{proposition}\label{prop: c}
     Assume $A\ge3$, $X_1(0)=\ldots=X_A(0)$ and $F_1=\ldots=F_A=F$, where $F$ satisfies (\ref{eq: explosion}). Then we have $c(A, a)>1$ and
    \begin{align*}
         \P_{A, a}\left(X_1(\infty)>x,\ldots,\,X_a(\infty)>x \right)>\prod_{i=1}^a \P_{A, a}\left(X_i(\infty)>x\right)
    \end{align*}
     for any $x\ge X_1(0)$ and $a\in[A-1]$.
\end{proposition}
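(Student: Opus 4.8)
The plan is to establish both inequalities via a single positive-correlation argument applied pathwise, before integrating against the law of the common explosion time $S = \min\{T_i : i = a+1,\ldots,A\}$. The key point is that in the fully symmetric case the agents $1,\ldots,a$ are exchangeable and, conditionally on $S$, their terminal values $X_i(\infty) = \Xi_i(S)$ are \emph{independent} (each $\Xi_i$ is a birth process independent of the others and of $S$). So the only source of dependence between $X_1(\infty),\ldots,X_a(\infty)$ on the conditioned space is the common randomness $S$, and everything reduces to an association/FKG-type statement for a mixture.

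First I would fix $x \ge X_1(0)$ and define, for $s > 0$, the function $h(s) \coloneqq \P(\Xi_1(s) > x) = \P(\Xi_1(s) > x, T_1 > s) = \P(\Xi_1(s) > x)$, together with the survival function $G(s) = \P(T_1 > s)$; by symmetry these are the same for every agent $i \in [a]$. Writing out $\P_{A,a}$ via the exponential embedding exactly as in \eqref{eq: tildeP}, the joint conditioned tail becomes
\begin{equation*}
\P_{A,a}\big(X_1(\infty) > x,\ldots, X_a(\infty) > x\big) = \frac{\E\big[h(S)^a\big]}{\P(sMon_1^c \cap \ldots \cap sMon_a^c)},
\end{equation*}
since on $\{T_j > S \;\forall j \in [a]\}$ one has $\{\Xi_i(S) > x\} \subseteq \{T_i > S\}$ automatically, and conditionally on $S$ the events are independent with probability $h(S)$ each. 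Likewise the single-agent conditioned tail is $\P_{A,a}(X_i(\infty) > x) = \E[h(S) G(S)^{a-1}] / \P(sMon_1^c \cap \ldots \cap sMon_a^c)$. Plugging in and clearing the common denominator $D \coloneqq \P(sMon_1^c \cap \ldots \cap sMon_a^c) = \E[G(S)^a]$ (this last identity again from the embedding, since $sMon_1^c \cap \ldots \cap sMon_a^c = \{T_j > S \;\forall j \in [a]\}$ conditionally has probability $G(S)^a$), the claimed inequality is equivalent to
\begin{equation*}
\E\big[h(S)^a\big]\,\E\big[G(S)^a\big]^{a-1} > \E\big[h(S) G(S)^{a-1}\big]^a.
\end{equation*}

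The main obstacle — and the heart of the argument — is proving this inequality. I would handle it in two moves. The crucial structural fact is that $h(s) \le G(s)$ pointwise (since $\{\Xi_1(s) > x\} \subseteq \{T_1 > s\}$), and moreover the ratio $h(s)/G(s) = \P(\Xi_1(s) > x \mid T_1 > s)$ is \emph{monotone increasing} in $s$: this is precisely the content of Proposition \ref{prop: ttoinfty} combined with Lemma \ref{lemma: gProperties} (for large $s$ it increases to the nondegenerate limit $1 - \prod_{k>x}(1 - F(X_1(0))/F(k))$), but for a clean proof I would instead note that $h/G$ increasing follows from the fact that $T_1 - \bar T_1(x)$ and $\bar T_1(x)$ are independent, so that conditioning on larger $T_1$ stochastically increases the first passage time $T_1(x) = T_1 - \bar T_1(x)$ hence the event $\{\Xi_1(s) > x\} = \{T_1(x) > s\}$ becomes more likely — a monotone likelihood ratio / stochastic monotonicity computation. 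Given that $h/G$ is increasing, write $h(s) = \varphi(s) G(s)$ with $\varphi$ increasing, and the target inequality reads $\E[\varphi(S)^a G(S)^a]\,\E[G(S)^a]^{a-1} > \E[\varphi(S) G(S)^a]^a$, i.e. Jensen's inequality (strict, because $\varphi$ is non-constant — here one uses $A \ge 3$ so that $S$ is genuinely random, together with the fact that $\varphi$ is strictly increasing somewhere) for the convex function $z \mapsto z^a$ applied to the probability measure $G(s)^a \, d\P_S(s) / \E[G(S)^a]$. The strictness of Jensen requires that $\varphi(S)$ be non-degenerate under this tilted measure, which holds because $G(S)^a$ is a.s. positive and $\varphi$ is strictly increasing on an interval of positive $S$-measure. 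This simultaneously gives $c(A,a) > 1$ (the $x \to \infty$ limit is the same computation with $h$ replaced by the tail sequence $\sum_{k > x} 1/F(k)$, absorbed into the constant as in \eqref{eq: cFormula}–\eqref{eq: bernoulli}), though that half was already shown in the text via the split $c(A,a) = \E[g(S)^a]/(\E g(S))^a \cdot (A-1)^a/(A^{a-1}(A-a))$.

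A subtlety to check: one must verify $h(s) > 0$ for all $s > 0$ (so the tilted measure and Jensen make sense), which follows from $g(s) > 0$ for $s > 0$ in Lemma \ref{lemma: gProperties} together with $x \ge X_1(0)$; and one must confirm $\E[G(S)^a] = \P(sMon_1^c \cap \ldots \cap sMon_a^c) > 0$, which holds since each $F_i$ satisfies \eqref{eq: explosion} so $\P(sMon_i^c) > 0$ and, by the embedding, the joint event has positive probability whenever $A > a$, i.e. $a \in [A-1]$. Finally, since this argument is entirely pathwise in $S$ and uses only $h/G$ increasing plus strict Jensen, it covers all $x \ge X_1(0)$ uniformly and needs no asymptotics, which is exactly the strengthening over \eqref{eq: CorAsymp} that the proposition asserts.
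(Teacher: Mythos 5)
Your proposal is correct in substance but takes a genuinely different route from the paper. The paper's proof of the displayed inequality proceeds by the chain: plain Jensen $\int_0^\infty h(s)^a\,d\P_S(s)>\big(\int_0^\infty h(s)\,d\P_S(s)\big)^a$ with respect to the law of $S$ itself, then the coupling bound $\P(\Xi_1(S)>x)\ge\P(X_1(\infty)>x)$ (using $S\ge S_1$), then the exchangeability identity relating $\P(X_1(\infty)>x,\,sMon_1^c\cap\ldots\cap sMon_a^c)$ to $\P(X_1(\infty)>x)$ via the factor $\frac{A-a}{A-1}$, and finally Bernoulli's inequality to absorb the resulting combinatorial prefactor $\frac{A(A-1)^a}{(A-a)^{a+1}}\cdot\big(\frac{A-a}{A}\big)^a$. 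You instead write both sides of the claimed inequality \emph{exactly} as expectations over $S$ --- the joint tail as $\E[h(S)^a]/\E[G(S)^a]$ and each marginal as $\E[h(S)G(S)^{a-1}]/\E[G(S)^a]$ --- and reduce the whole statement to a single weighted Jensen inequality for the tilted measure $d\mu\propto G(S)^a\,d\P_S$ applied to $\varphi=h/G$ and the convex map $z\mapsto z^a$. Both reductions are correct; yours is arguably cleaner, since it dispenses with the coupling step, the symmetry counting and Bernoulli, and isolates the source of the positive correlation in the non-degeneracy of $\varphi(S)$.

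Two caveats. First, the monotonicity of $h/G$ that you present as the heart of the argument is not actually needed: strict Jensen only requires $\varphi(S)$ to be non-degenerate under the tilted measure, which follows directly from $\varphi(s)\to0$ as $s\to0$ (for $x\ge X_1(0)$ at least one jump is needed) together with $\varphi(s)>0$ for $s>0$ (Lemma \ref{lemma: gProperties}). Moreover, your sketched justification of that monotonicity has the event reversed: $\{\Xi_1(s)>x\}=\{T_1(x)\le s\}$ (first passage to level $x+1$ has occurred by time $s$), not $\{T_1(x)>s\}$, so the stochastic-monotonicity heuristic as written points the wrong way; I would simply delete this and argue non-constancy directly. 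Second, like the paper's own computation, your argument yields equality rather than strict inequality when $a=1$, and both treatments are slightly cavalier about the event $\{\Xi_i(s)=\infty\}$, so one should read $h(s)=\P(x<\Xi_i(s)<\infty)$ throughout. Neither point is a gap relative to the paper's own proof.
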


\begin{proof}
$c(A, a)>1$ has been derived above. Due to the assumed symmetry, we can define $h(s)\coloneqq\P(\Xi_i(s)>x)$. Denote by $\P_S$ the law of  $S$ (defined in Theorem \ref{thm: LoserCorrelation}). Using Jensen's inequality, we get:
\begin{align*}
     \P_{A, a}&\left(X_1(\infty)>x,\,\ldots,\, X_a(\infty)>x\right)=\frac{\P\left(\Xi_1(S)>x, \ldots, \Xi_a(S)>x \right)}{\P( sMon_1^c\cap\ldots\cap sMon_a^c)}\\
    &=\frac{1}{\P( sMon_1^c\cap\ldots\cap sMon_a^c)}\int_0^\infty\P\left(\Xi_1(s)>x,  \ldots, \Xi_a(s)>x\right)\,d\P_S(s)\\
    &=\frac{A}{A-a}\int_0^\infty h(s)^a\,d\P_S(s)>\frac{A}{A-a}\left(\int_0^\infty h(s)\,d\P_S(s)\right)^a\\
    &=\frac{A}{A-a}\P(\Xi_1(S)>x)^a>\frac{A}{A-a}\P(X_1(\infty)>x)^a\\
    &=\frac{A}{A-a}\left(\P(X_1(\infty)>x,\, sMon_1^c,\ldots, sMon_a^c)\cdot\frac{A-1}{A-a}\right)^a\\
    &=\frac{A(A-1)^a}{(A-a)^{a+1}}\prod_{i=1}^a\left[ \P_{A, a}\left(X_i(\infty)>x\right)\,\P( sMon_1^c,\ldots, sMon_a^c)\right]\\
    &=\frac{A(A-1)^a}{(A-a)^{a+1}}\cdot \left(\frac{A-a}{A}\right)^a\prod_{i=1}^a \P_{A, a}\left(X_i(\infty)>x\right)>\prod_{i=1}^a \P_{A, a}\left(X_i(\infty)>x\right)
\end{align*}
The last step follows again via Bernoulli's inequality. Throughout the calculation, we exploited the supposed symmetry of all agents. 
\end{proof}

Appendix \ref{sec: aDependence} examines the dependence of $c(A, a)$ on the system size $A$. In Appendix \ref{sec: manyAgents} (in particular Theorem \ref{thm: limA}) we will first see that for large $A$, a randomly chosen agent does not win a single step with high probability. Recall that $c(A, a)$ only measures the tail dependence of agents, i.e. we basically only consider agents who won many steps. Surprisingly, $c(A, a)$ turns out to be increasing with $A$, i.e. the dependence between losers in the sense captured by $c(A, a)$ is stronger in large systems. This seems to be opposed to the linear case, where we observe asymptotic independence for $A\to \infty$ (Theorem \ref{thm: asymptoticExp}), but we also have a trivial asymptotic independence in the super-linear case (Corollary \ref{cor: trivialIndependence}) since typical losers do not win anything. Moreover, $c(A, a)$ is increasing in $a$ since the information that several losers won many steps is a stronger hint for a late explosion time of the winner, such that the considered agent is more likely to win many steps, too.

\subsection{Further consequences of Theorem \ref{thm: loserSuperlin} and Theorem \ref{thm: LoserCorrelation}}

Theorem \ref{thm: loserSuperlin} and Theorem \ref{thm: LoserCorrelation} are useful to determine the tail behaviour of further random variables arsing in the context of our generalized Pólya urn model. This subsection presents some examples.

\begin{corollary}\label{cor: TailCorAsymp}
    In the situation of Theorem \ref{thm: LoserCorrelation}, the following asymptotics hold for $x\to\infty$.
    \begin{enumerate}
        \item \begin{align*}
             \P_{A, a}&(\min(X_1(\infty),\ldots, X_a(\infty))>x)\sim \frac{\E\left[\prod_{i=1}^ag_i(S)\right]}{\P(sMon_1^c\cap\ldots\cap sMon_a^c)}\cdot\prod_{i=1}^a\left(\sum_{k=x+1}^\infty \frac{1}{F_i(k)}\right)
        \end{align*}
        \item  \begin{align*}
             \P_{A, a}&(\max(X_1(\infty),\ldots, X_a(\infty))>x)\sim \sum_{i=1}^a\left(\frac{\E\left[\left(\prod_{j\ne i}G_j (S)\right)g_i(S)\right]}{\P(sMon_1^c\cap\ldots\cap sMon_a^c )}\sum_{k=x+1}^\infty \frac{1}{F_i(k)}\right)
        \end{align*}
        \item If $F_1,\ldots, F_a$ are regular varying, then:
         \begin{align*}
             \P_{A, a}(X_1(\infty)+\ldots+X_a(\infty)>x)\sim \sum_{i=1}^a\left(\frac{\E\left[\left(\prod_{j\ne i}G_j(S)\right)g_i(S)\right]}{\P(sMon_1^c\cap\ldots\cap sMon_a^c )}\sum_{k=x+1}^\infty \frac{1}{F_i(k)}\right)
        \end{align*}
    \end{enumerate}
\end{corollary}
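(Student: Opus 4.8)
The plan is to prove the three asymptotics by reducing each event to a finite union or combination of the joint-tail events already controlled by Theorem \ref{thm: LoserCorrelation} (equivalently \eqref{eq: CorAsymp} and \eqref{eq: tildeP}), using that for heavy-tailed summands the dominant contribution to a sum comes from a single large summand while joint largeness is a lower-order event. All three parts share the same structure: express the event in terms of the marginal tails $\P_{A,a}(X_i(\infty)>x)$, which by \eqref{eq: tildeP} behave like $\E[(\prod_{j\in[a]\setminus\{i\}}G_j(S))g_i(S)]/\P(sMon_1^c\cap\ldots\cap sMon_a^c)\cdot\sum_{k>x}1/F_i(k)$, and then check that cross terms are negligible.

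\emph{Part 1 (minimum).} Here $\{\min_i X_i(\infty)>x\}=\{X_1(\infty)>x,\ldots,X_a(\infty)>x\}$ exactly, so this is literally the joint tail in \eqref{eq: CorAsymp}. I would simply rerun the computation in the proof of Theorem \ref{thm: LoserCorrelation}: condition on $S$, use independence of the embedded birth processes to factor $\P(\Xi_1(s)>x,\ldots,\Xi_a(s)>x,\forall j\colon T_j>s)=\prod_{i=1}^a\P(\Xi_i(s)>x)\cdot 1$ wait — more carefully, $\prod_{i\le a}\P(\Xi_i(s)>x)\prod_{j>a}G_j(s)$ is subsumed since $S=\min_{j>a}T_j$ and for the conditioning on $sMon_i^c$ we only need $T_i>S$, which is automatic once $\Xi_i(S)>x\ge X_i(0)$ — then apply the uniform asymptotics $\P(\Xi_i(s)>x)\sim (\sum_{k>x}1/F_i(k))\,g_i(s)$ from Theorem \ref{thm: birthMain} together with the uniform bound of Corollary \ref{cor: uniformBound} to pass the limit inside the integral, exactly as in Corollary \ref{cor: birthMain}. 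This yields the stated formula directly.

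\emph{Parts 2 and 3 (maximum and sum).} For the maximum, $\{\max_i X_i(\infty)>x\}=\bigcup_{i=1}^a\{X_i(\infty)>x\}$, and inclusion–exclusion gives $\P_{A,a}(\max>x)=\sum_i\P_{A,a}(X_i(\infty)>x)-\sum_{i<i'}\P_{A,a}(X_i(\infty)>x,X_{i'}(\infty)>x)+\ldots$; by Part 1 (applied to pairs, triples, etc.) every term beyond the first sum is a product of two or more tails $\sum_{k>x}1/F_i(k)$, hence $o$ of each single marginal, so only $\sum_i\P_{A,a}(X_i(\infty)>x)$ survives, and substituting \eqref{eq: tildeP} gives the claim. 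For the sum, under regular variation of $F_1,\ldots,F_a$ the marginal tails $\sum_{k>x}1/F_i(k)$ are regularly varying, so $X_i(\infty)$ conditioned on $sMon_i^c$ is subexponential; the standard principle of a single big jump for sums of heavy-tailed random variables — valid here despite dependence because the dependence is asymptotic, controlled by \eqref{eq: CorAsymp} with a finite constant $c(A,a)$ — gives $\P_{A,a}(\sum_i X_i(\infty)>x)\sim\sum_i\P_{A,a}(X_i(\infty)>x)$, which is the same leading term as the maximum.

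The main obstacle is Part 3: one must justify the single-big-jump heuristic in the present dependent setting. I would handle this by the usual splitting argument: for the lower bound, $\{\sum_i X_i(\infty)>x\}\supseteq\bigcup_i\{X_i(\infty)>x\}$ minus overlaps, giving $\gtrsim\sum_i\P_{A,a}(X_i(\infty)>x)$ by Part 2; for the upper bound, fix small $\delta>0$ and write $\{\sum_i X_i(\infty)>x\}\subseteq\bigcup_i\{X_i(\infty)>(1-(a-1)\delta)x\}\cup\bigcup_{i<i'}\{X_i(\infty)>\delta x,X_{i'}(\infty)>\delta x\}$, then use regular variation to absorb the factor $(1-(a-1)\delta)^{-\rho_i}$ as $\delta\to0$ on the first family and Part 1 to kill the second family, which is $O\big((\sum_{k>\delta x}1/F_i(k))(\sum_{k>\delta x}1/F_{i'}(k))\big)=o\big(\sum_{k>x}1/F_j(k)\big)$. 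Matching the bounds as $\delta\downarrow 0$ completes the argument; regular variation of $F_i$ is exactly what makes $\sum_{k>x}1/F_i(k)$ regularly varying (via Karamata) and supplies the needed uniform convergence $\sum_{k>cx}1/F_i(k)\big/\sum_{k>x}1/F_i(k)\to c^{1-\rho_i}$.
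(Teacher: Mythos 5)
Your proposal is correct and follows essentially the same route as the paper: part 1 is read off from Theorem \ref{thm: LoserCorrelation}, part 2 uses inclusion--exclusion with the cross terms killed by the product-of-tails form of the joint asymptotics, and part 3 is the single-big-jump splitting argument, which the paper isolates as Lemma \ref{lemma: paretoSumTail} for weakly dependent regularly varying tails (with Karamata supplying regular variation of $x\mapsto\sum_{k>x}1/F_i(k)$) and which you reproduce inline.
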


\begin{proof}
    1. We rephrase  Theorem \ref{thm: LoserCorrelation} again as
    $$\P_{A, a}(X_1(\infty)>x_1,\ldots, X_a(\infty)>x_a)\sim \frac{\E\left[\prod_{i=1}^ag_i(S)\right]}{\P(sMon_1^c\cap\ldots\cap sMon_a^c)} \cdot\prod_{i=1}^a\left(\sum_{k=x_i+1}^\infty \frac{1}{F_i(k)}\right)\,.$$
    Then the claim follows from
    \begin{align*}
         \P_{A, a}(\min(X_1(\infty),\ldots, X_a(\infty))>x)= \P_{A, a}\left(X_1(\infty)>x,\ldots,\,X_a(\infty)>x\right)\,.
    \end{align*}
    
    2. This follows from (\ref{eq: tildeP}) and the inclusion-exclusion formula via 
    \begin{align*}
         \P_{A, a}(\max(X_1(\infty),\ldots, X_a(\infty))>x)= \P_{A, a}\left(\bigcup_{i=1}^a\{X_i(\infty)>x\}\right)\sim \sum_{i=1}^a \P_{A, a}(X_i(\infty)>x)\,.
    \end{align*}
    
    3.  Karamata's Theorem implies that $x\mapsto\sum_{k=x+1}^\infty \frac{1}{F_i(k)}$ are also regularly varying for $i=1,\ldots,\,a$. Then the claim follows from (\ref{eq: tildeP}) and Lemma \ref{lemma: paretoSumTail} applied on the conditioned probability space $sMon_1^c\cap\ldots\cap sMon_a^c$.
\end{proof}

\begin{lemma}\label{lemma: paretoSumTail}
    Let $X_1, \ldots, X_k$ be random variables, such that $x\mapsto\P(X_i>x)$ is regular varying for all $i=1,\ldots,k$. Moreover, assume that the $X_i$ are at most weakly dependent in the sense that
    $$\forall i, j\in[k]\colon \lim_{x\to\infty}\frac{\P(X_i>x)}{\P\left(X_i>x,\,X_j>x\right)}=\infty\,.$$
    Then:
    $$\P(X_1+\ldots+X_k>x)\sim\P(X_1>x)+\ldots +\P(X_k>x)\quad\text{for }x\to\infty$$    
\end{lemma}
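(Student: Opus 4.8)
The plan is to prove this by the standard ``one big jump'' heuristic for heavy-tailed (subexponential) sums, which here reduces to a clean two-sided estimate. For the lower bound, I would fix $\epsilon>0$ and observe that the events $\{X_i>(1-\epsilon)x\}$, $i\in[k]$, nearly cover the event $\{X_1+\ldots+X_k>x\}$ is in the wrong direction; instead, for the \emph{lower} bound the trivial inclusion
\[
\{X_1+\ldots+X_k>x\}\supseteq\bigcup_{i=1}^k\Bigl\{X_i>x,\ \textstyle\sum_{j\ne i}X_j\ge 0\Bigr\}
\]
together with Bonferroni gives $\P(X_1+\ldots+X_k>x)\ge\sum_i\P(X_i>x)-\sum_{i<j}\P(X_i>x,X_j>x)$, and the weak-dependence hypothesis makes the subtracted terms $o\bigl(\P(X_i>x)\bigr)$, so $\P(X_1+\ldots+X_k>x)\gtrsim\sum_i\P(X_i>x)$. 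Here I am implicitly using that the $X_i$ may be taken nonnegative up to a harmless truncation at, say, $0$, which only changes the sum by an $O(1)$ shift and does not affect regularly varying tails; I would spell this reduction out first.

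For the upper bound I would use the union-type decomposition with a small slack: for $\epsilon>0$,
\[
\{X_1+\ldots+X_k>x\}\subseteq\bigcup_{i=1}^k\{X_i>(1-(k-1)\epsilon)x\}\ \cup\ \bigcup_{i\ne j}\{X_i>\epsilon x,\ X_j>\epsilon x\},
\]
since if no single variable exceeds $(1-(k-1)\epsilon)x$ and no two simultaneously exceed $\epsilon x$ then the sum is at most $(1-(k-1)\epsilon)x+(k-1)\epsilon x=x$. By subadditivity, $\P(X_1+\ldots+X_k>x)\le\sum_i\P(X_i>(1-(k-1)\epsilon)x)+\sum_{i\ne j}\P(X_i>\epsilon x,X_j>\epsilon x)$. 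The second sum is $o\bigl(\P(X_i>\epsilon x)\bigr)=o\bigl(\P(X_i>x)\bigr)$ by weak dependence and regular variation; the first sum is $\sim\sum_i(1-(k-1)\epsilon)^{-\rho_i}\P(X_i>x)$ by regular variation with indices $-\rho_i$. Letting $\epsilon\downarrow0$ after dividing by $\sum_i\P(X_i>x)$ yields $\limsup\le1$.

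The only mild subtlety — and what I expect to be the main point requiring care — is the bookkeeping of the different regular-variation indices $\rho_i$ and the possibility that some $\P(X_i>x)$ is of strictly smaller order than $\max_j\P(X_j>x)$: one must divide by the \emph{sum} $\sum_i\P(X_i>x)$ (not by any single term) and use that each ratio $\P(X_i>(1-\delta)x)/\P(X_i>x)\to(1-\delta)^{-\rho_i}\to1$ as $\delta\to0$ \emph{uniformly enough} across the finitely many $i$, which is immediate since $k$ is fixed. Also, in the weak-dependence step one should note $\P(X_i>\epsilon x,X_j>\epsilon x)\le\min\{\P(X_i>\epsilon x),\P(X_j>\epsilon x)\}$ and apply the hypothesis to whichever index gives the smaller tail, so that the bound is genuinely $o$ of \emph{both} $\P(X_i>x)$ and $\P(X_j>x)$; this is where the symmetric phrasing ``for all $i,j$'' of the hypothesis is used. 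Combining the two bounds gives the claimed asymptotic equivalence.
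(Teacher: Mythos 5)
Your proposal is correct and follows essentially the same route as the paper: a Bonferroni lower bound whose cross terms are killed by the weak-dependence hypothesis, and an upper bound via the $\epsilon$-slack decomposition combined with regular variation, letting $\epsilon\downarrow 0$ at the end. The only cosmetic difference is that you treat general $k$ directly (which neatly sidesteps having to verify that partial sums remain regularly varying and weakly dependent), whereas the paper proves $k=2$ and appeals to induction; both arguments, like yours, implicitly use nonnegativity of the $X_i$ in the lower bound, which you at least flag explicitly.
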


\begin{proof}
We only prove the case $k=2$ since general $k$ follows directly by induction. Then we have the following lower bound:
\begin{align*}
    \P(X_1+X_2>x)\ge \P(X_1>x)+\P(X_2>x)-\P(X_1>x,\,X_2>x)\sim\P(X_1>x)+\P(X_2>x)
\end{align*}
For an upper bound, take $0<\delta<\frac12$. Then
\begin{align*}
    \P(X_1+X_2>x)&\le \P(X_1>(1-\delta)x)+\P(X_2>(1-\delta)x)+\P(X_1>\delta x,\,X_2>\delta x)\\
    &\sim (1-\delta)^{\alpha_1}\P(X_1>x)+(1-\delta)^{\alpha_2}\P(X_2>x)\,,
\end{align*}
where $\alpha_i$ is the index of regular variation of $x\mapsto\P(X_i>x)$. $\delta\to0$ implies the claim.
\end{proof}

Note that in Corollary \ref{cor: TailCorAsymp} the maximum and sum have the same tail. This is a characterizing property of sub-exponential random variables and, hence, the assumption of regular varying feedback cannot be omitted in 3. of Corollary \ref{cor: TailCorAsymp}. 

Again, the tail decay of the sum, minimum and maximum of $a$ losers only depends on $F_1,\ldots, F_a$, but not on $F_{a+1},\ldots, F_A$ (up to the constant prefactor). Corollary \ref{cor: TailCorAsymp} also contains detailed information about the sum, minimum and maximum of all losers. To see that, denote by $W\in[A]$ the random winner of the process and write
$$\P\left(\sum_{j\ne W}X_j(\infty)>x\right)=\sum_{i=1}^A\P(sMon_i)\P\left(\sum_{j\ne i}X_j(\infty)>x\,\Big|\, \bigcap_{j\ne i}sMon_j^c\right)$$
and analogously for the maximum and minimum. Let us finally discuss this in the polynomial case.

\begin{example}
    Let $F_i(k)=\alpha_i k^{\beta_i}$ for $\alpha_i>0$ and $1<\beta_1\le\ldots\le\beta_A$. Then
    $$\P\left(\sum_{i\ne W}X_i(\infty)>x\right)\sim\P\left(\max_{i\ne W}X_i(\infty)>x\right)\sim const. x^{1-\beta_1} \quad\text{for }x\to\infty$$
    and 
    $$\P\left(\min_{i\ne W}X_i(\infty)>x\right)\sim const. x^{A-1-\beta_1-\ldots-\beta_{A-1}} \quad\text{for }x\to\infty\,.$$
    In both cases, the tail decay is independent of $\beta_A$ up to a constant. Moreover, if $\beta_1<\beta_2$ and the losers win many steps, then it is most likely that in fact the loser with the heaviest tail wins all those steps, i.e.
    $$\P\left(X_1(\infty)>x\,\Bigg|\, \sum_{i=1}^aX_i(\infty)>x, sMon_1^c\cap\ldots\cap sMon_a^c\right)\xrightarrow{x\to\infty}1$$
    for any $a<A$, consistent with the idea of Sanov's Theorem.
\end{example}

Moreover, Theorem \ref{thm: loserSuperlin} can be used to generalize previous results from \cite{Zhu, Cotar} concerning the time of monopoly, which is defined as
\begin{equation}\label{eq: MonTimeDef}
N_{mon}\coloneqq\min\{n\ge1 \colon \exists i\in[A]\,\forall\,m\ge n\colon X(m)-X(m-1)=e^{(i)}\}\,,  
\end{equation}
i.e. the index of the last step won by a loser (plus one). Using Theorem \ref{thm: loserSuperlin}, we can calculate the tail distribution of $N_{mon}$ for polynomial feedback  $F_i(k)=\alpha_ik^{\beta_i}$ with $\beta_i>1$ and $\alpha_i>0$, $i\in[A]$. Assume $\beta_2\le \beta_j$ for all $j\ge2$, i.e.\ among the losers agent 2 has the weakest feedback.
Then the following can be shown (under mild further assumptions), conditional on the first agent being the winner:

\begin{enumerate}
    \item If $\beta_2\ge \beta_1-1$, then we have
    \begin{equation}\label{eq ex1}
        \P(N_{mon}>n\,|\,sMon_1)\asymp n^{-(\beta_2 -1)(\beta_1 -1)/\beta_2}\quad\text{for }n\to\infty\,.
    \end{equation}
     \item If $1<\beta_2\le \beta_1-1$, then we have
        \begin{equation}\label{eq ex2}
    \P(N_{mon}>n\,|\,sMon_1)\asymp n^{-(\beta_2 -1)}\quad\text{for }n\to\infty\,.
    \end{equation}
\end{enumerate}

\begin{figure}
  \centering
  \subfloat{\includegraphics[width=0.7\linewidth]{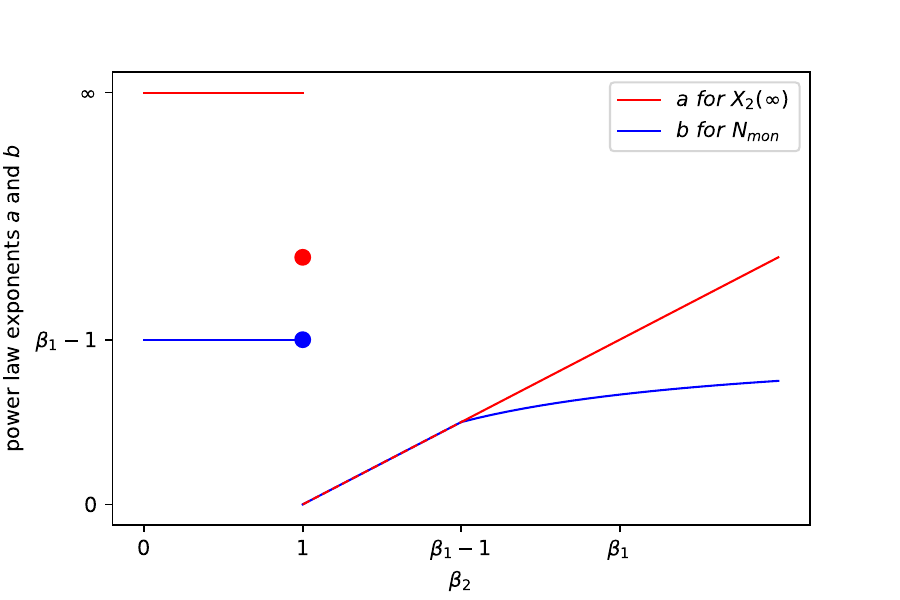}}
  \caption{Polynomial feedback for $A=2$. The power law exponents $a, b>0$ of $X_1(\infty)$ and $N_{mon}$ for fixed feedback of the winner $F_1(k)=k^{\beta_1}$, $\beta_1>2$, and varying feedback of the loser $F_2(k)=k^{\beta_2}$, $\beta_2>0$, i.e. $\P(X_2(\infty)>x\,|\, sMon_1)\asymp x^{-a}$ for $x\to\infty$ and  $\P(N_{mon}>n\,|\, sMon_1)\asymp x^{-b}$ for $n\to\infty$. $a=\infty$ corresponds to tails lighter than a power law. Theorem \ref{thm: loserSuperlin} and Corollary \ref{cor: MonTimeSuperlin} cover the case $\beta_2 >1$, and Theorem \ref{thm: loserSublin} and Corollary \ref{cor: MonTime} cover the case $\beta_2 \leq 1$. The transition at $\beta_2 =1$ is also discussed in Section \ref{sec summary}. Note that $a\geq b$ since on the event $sMon_1$ we have $N_{mon} \geq X_2 (\infty )$.} 
  \label{figure: ExponentPlot}
\end{figure}

A proof is presented in Appendix \ref{sec: time} as well as a discussion of exponential feedback. As opposed to the wealth of losers, the tail of the time of monopoly depends also on the feedback of the winner. Note that the tail behaviour of $N_{mon}$ is independent of the losers with stronger feedback $F_j$, $j\geq 3$. Figure \ref{figure: ExponentPlot} illustrates this situation for $A=2$. This result grants some new interesting insights in the large deviation behaviour of asymmetric generalized Pólya urns. For example, when the monopoly sets in late, this happens in different ways depending on the feedback. For simplicity, we assume $A=2$ and condition on agent $1$ to be the winner. The following is established rigorously in Corollary \ref{cor: ShareMonTime}:

\begin{enumerate}
    \item If the feedback of the winner is significantly stronger than the feedback of the loser (i.e. $1<\beta_2\le \beta_1-1$), then the tail of $N_{mon}$ is independent of $\beta_1$ and equals the one of the wealth of the loser $X_2 (\infty )$ up to a constant prefactor ($a=b$ in Figure \ref{figure: ExponentPlot}).
    So the loser wins most steps at the beginning (even more than the winner), when suddenly the strong monopoly sets in and the loser does not win any further steps.
    
    \item If the feedback of the winner is at most slightly stronger than the feedback of the loser (i.e. $\beta_2 \geq \beta_1 -1$), then the loser can still win some late steps when the advantage of the winner is already large ($a>b$ in Figure \ref{figure: ExponentPlot}).
    
    \item At the point of transition $\beta_2=\beta_1 -1$, we observe an intermediate regime, where the process is balanced at the beginning, when suddenly the strong monopoly sets in and the monopolist wins all further steps.
\end{enumerate}

\section{Sub-linear agents}\label{sec: LoserSublin}

So far, we mostly considered agents satisfying the monopoly condition (\ref{eq: explosion}). Nevertheless, any system with at least one agent fulfilling (\ref{eq: explosion}) exhibits strong monopoly, even if some agents do not satisfy (\ref{eq: explosion}). The latter agents are almost surely not the monopolist and hence, $X_i(\infty)$ is a finite random variable for them. The following theorem characterizes the corresponding tail distribution.

\begin{theorem}\label{thm: loserSublin}
    Assume that the set $M :=\{ i\in [A]:F_i \mbox{ satisfies }\ref{eq: explosion}\}$ of possible monopolists is non-empty but $1\not\in M$. Moreover, assume $\lim_{k\to\infty}F_1(k)=\infty$ and that all $F_i$, $i\in M$ are strictly monotone. Then
    $$-\log(\P(X_1(\infty)>x))\sim d\sum_{k=X_1(0)}^x \frac{1}{F_1(k)}\quad\text{for }x\to\infty\,,\quad\mbox{where}\quad d\coloneqq\sum_{i\in M} F_i(X_i(0))\ . $$
\end{theorem}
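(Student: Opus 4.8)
The plan is to use the exponential embedding and analyse the birth process $(\Xi_1(t))_{t\ge0}$ of agent $1$ observed at the random time $S\coloneqq\min\{T_i : i\in M\}$, where now $\Xi_1$ is \emph{non-explosive} (since $1\notin M$), while $S<\infty$ almost surely because $M\ne\emptyset$. Since $\Xi_1$ is independent of $S$ and $\{X_1(\infty)>x\}=\{\Xi_1(S)>x\}=\{T_1(x)<S\}$ with $T_1(x)=\sum_{l=X_1(0)}^x\tau_1(l)$ the first passage time of $\Xi_1$ to $x+1$, we get
\begin{equation*}
\P(X_1(\infty)>x)=\P(T_1(x)<S)=\E\big[\P(T_1(x)<S\mid\Xi_1)\big]=\E\big[\bar G_S(T_1(x))\big],
\end{equation*}
where $\bar G_S(t)\coloneqq\P(S>t)$ is the survival function of $S$. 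The key observation is that, because each $T_i$, $i\in M$, has an exponential tail governed by its smallest rate $F_i(X_i(0))$ (this is exactly \eqref{eq: Ttail} applied to each coordinate, using that the $F_i$, $i\in M$, are strictly monotone), the independence of the $T_i$ gives $\bar G_S(t)=\prod_{i\in M}\P(T_i>t)\sim\mathrm{const}\cdot e^{-dt}$ as $t\to\infty$, with $d=\sum_{i\in M}F_i(X_i(0))$. So the problem reduces to a Laplace-type asymptotic: estimate $-\log\E\big[e^{-d\,T_1(x)}\big]$ (up to the sub-exponential correction in $\bar G_S$) as $x\to\infty$.

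The next step is to compute the Laplace transform of $T_1(x)$ exactly. Since $T_1(x)=\sum_{l=X_1(0)}^x\tau_1(l)$ is a sum of independent exponentials with parameters $F_1(l)$, we have
\begin{equation*}
\E\big[e^{-d\,T_1(x)}\big]=\prod_{l=X_1(0)}^x\frac{F_1(l)}{F_1(l)+d}=\prod_{l=X_1(0)}^x\Big(1-\frac{d}{F_1(l)+d}\Big),
\end{equation*}
so that
\begin{equation*}
-\log\E\big[e^{-d\,T_1(x)}\big]=-\sum_{l=X_1(0)}^x\log\Big(1-\frac{d}{F_1(l)+d}\Big)\sim d\sum_{l=X_1(0)}^x\frac1{F_1(l)},
\end{equation*}
using $F_1(l)\to\infty$, hence $\frac{d}{F_1(l)+d}\to0$, so $-\log(1-\tfrac{d}{F_1(l)+d})\sim\tfrac{d}{F_1(l)}$, and the partial sums diverge (again because $1\notin M$, i.e.\ $\sum 1/F_1(l)=\infty$) so the asymptotic equivalence of summands transfers to the sums. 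This identifies $d\sum_{k=X_1(0)}^x 1/F_1(k)$ as the conjectured exponential rate; it remains to show that replacing $e^{-dt}$ by the true $\bar G_S(t)$ inside the expectation, and taking the expectation over $T_1(x)$ rather than just $e^{-d\,T_1(x)}$, only contributes $o\big(\sum_{k=X_1(0)}^x 1/F_1(k)\big)$ to $-\log\P(X_1(\infty)>x)$.

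For the two matching bounds I would argue as follows. For the upper bound on $\P(X_1(\infty)>x)$ (lower bound on $-\log$): fix $\varepsilon>0$; since $\bar G_S(t)\le C_\varepsilon e^{-(d-\varepsilon)t}$ for all $t\ge0$ (exponential tail plus boundedness near $0$), we get $\P(X_1(\infty)>x)\le C_\varepsilon\,\E[e^{-(d-\varepsilon)T_1(x)}]$, and the computation above with $d$ replaced by $d-\varepsilon$ gives $-\log\P(X_1(\infty)>x)\ge(d-\varepsilon)(1+o(1))\sum_{k=X_1(0)}^x 1/F_1(k)$; let $\varepsilon\to0$. For the lower bound on $\P(X_1(\infty)>x)$: here the cleanest route is to restrict the expectation to the event $\{T_1(x)\le (1+\eta)\,m_x\}$ where $m_x\coloneqq\E[T_1(x)]=\sum_{l=X_1(0)}^x 1/F_1(l)\to\infty$; on that event $\bar G_S(T_1(x))\ge \bar G_S((1+\eta)m_x)\ge c_\varepsilon e^{-(d+\varepsilon)(1+\eta)m_x}$, and by a one-sided Chebyshev/second-moment estimate $\P\big(T_1(x)\le(1+\eta)m_x\big)$ stays bounded below (indeed $\to1$, since $\mathrm{Var}(T_1(x))=\sum 1/F_1(l)^2$ is $o(m_x^2)$ as $F_1(l)\to\infty$), whence $-\log\P(X_1(\infty)>x)\le(d+\varepsilon)(1+\eta)(1+o(1))m_x$; let $\eta,\varepsilon\to0$. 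The main obstacle is this lower bound: one must ensure $T_1(x)$ concentrates around its mean $m_x$ relative to the exponential scale $e^{-d\,m_x}$, i.e.\ that fluctuations of $T_1(x)$ of order $o(m_x)$ do not wash out the claimed rate; the assumption $F_1(k)\to\infty$ is exactly what makes $\mathrm{Var}(T_1(x))/m_x^2\to0$ and hence makes this work, and it is worth checking that no stronger concentration (e.g.\ a large-deviation bound for $\P(T_1(x)\le(1+\eta)m_x)$) is actually needed since an $O(1)$ lower bound on that probability already suffices at the logarithmic scale.
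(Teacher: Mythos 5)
Your proposal is correct and follows essentially the same route as the paper: reduce via the exponential embedding to $\E\big[\prod_{i\in M}\P(T_i>T_1(x))\big]$, sandwich the integrand between constant multiples of $e^{-ds}$ using the exponential tail \eqref{eq: Ttail} of each $T_i$, and extract the asymptotics of $-\log$ of the Laplace transform of the sum of independent exponentials, with $F_1(k)\to\infty$ making the second-order correction $\sum_l F_1(l)^{-2}$ negligible against $\sum_l F_1(l)^{-1}$. The only real deviation is your lower bound, where you invoke Chebyshev concentration of $T_1(x)$ around its mean; the paper sidesteps this entirely via the pointwise bound $\P(T_i>s)\ge\P(\tau_i(X_i(0))>s)=e^{-F_i(X_i(0))s}$ valid for all $s\ge0$, which feeds directly into the exact Laplace-transform computation.
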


\begin{proof}
    Let $T\coloneqq\min_{i\in M}T_i$ be the smallest explosion time of the possible monopolists in the exponential embedding. Moreover, write $S(x)\coloneqq\sum_{k=X(0)}^x\tau_1(k)$ and denote by $\mu_{S(x)} =\P [S(x)\in \cdot]$ the law of $S(x)$. Using the independence of $S(x)$ and $T$, we get:
    \begin{align*}
        \P(X_1(\infty)> x)&=\P(\Xi_1(T)> x)=\P\left(S(x)<T\right)=\int_0^\infty \P(s<T)\,d\mu_{S(x)}(s)\\
        &=\int_0^\infty \prod_{i\in M}\P(T_i>s)\,d\mu_{S(x)}(s)
    \end{align*}
    Next, (\ref{eq: Ttail}) implies
    \begin{equation}\label{eq: LoserSublinEst}
        e^{-F_i(X_i(0))s}=\P(\tau_i(X(0))>s)<\P(T_i>s)<ce^{-F_i(X_i(0))s}
    \end{equation}
    for a constant $c>0$. Hence:
    $$\E\left[e^{-dS(x)}\right]<\P(X_1(\infty)> x)<c^{\#M}\E\left[e^{-dS(x)}\right]$$
    Then the following Lemma \ref{lemma: MGFbound} and 
    $$\lim_{k\to\infty}F_1(k)=\infty\quad\Longrightarrow\quad\lim_{k\to\infty}\frac{\sum_{l=1}^k\frac{1}{F_1(k)}}{\sum_{l=1}^k\frac{1}{F_1(k)^2}}=\infty$$
    finally imply the claim.
\end{proof}

Note that estimate (\ref{eq: LoserSublinEst}) does in general not hold if there was some $k>X_i(0)$ such that $F_i(k)=F_i(X_i(0))$. Moreover, Theorem \ref{thm: loserSublin} does not hold for non-diverging feedback, since e.g. for constant $F_i(k)=1$ we have
$$\E\left[e^{-dS(x)}\right]=\exp\left(-\log(1+d)\sum_{k=X_i(0)}^x \frac{1}{F_i(x)}\right) =(1+d)^{X_i (0)-x-1}\,.$$

\begin{lemma}\label{lemma: MGFbound}
    Assume that $X_1,\ldots, X_k$ are independent random variables, which are exponentially distributed with parameters $\lambda_1,\ldots, \lambda_k>0$. The the moment generating function of $S\coloneqq X_1+\ldots+X_k$ is bounded by
    $$\exp\left\{-s\sum_{l=1}^k\frac{1}{\lambda_k}\right\}<\E\left[e^{-sS}\right]<\exp\left\{-s\sum_{l=1}^k\frac{1}{\lambda_k}+s^2\sum_{l=1}^k\frac{1}{\lambda_k^2}\right\}$$
    for all $s>0$.
\end{lemma}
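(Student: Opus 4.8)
The plan is to compute the moment generating function of $S$ explicitly and then reduce both inequalities to elementary two-sided bounds on $\log(1+x)$. Since $X_1,\ldots,X_k$ are independent and $\E[e^{-sX_l}]=\lambda_l/(\lambda_l+s)$ for each exponential variable (valid for all $s>0$), I would first write
$$\E\left[e^{-sS}\right]=\prod_{l=1}^k\frac{\lambda_l}{\lambda_l+s}=\exp\left\{-\sum_{l=1}^k\log\left(1+\frac{s}{\lambda_l}\right)\right\}\,,$$
so that everything comes down to estimating $\log(1+x)$ with $x=s/\lambda_l>0$.

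For the lower bound on the MGF I would use $\log(1+x)<x$ for $x>0$, which gives $-\sum_{l=1}^k\log(1+s/\lambda_l)>-s\sum_{l=1}^k 1/\lambda_l$ and hence the left inequality. For the upper bound I would use $\log(1+x)>x-\tfrac{x^2}{2}$ for $x>0$; this follows since $g(x):=\log(1+x)-x+\tfrac{x^2}{2}$ satisfies $g(0)=0$ and $g'(x)=\tfrac{x^2}{1+x}>0$ for $x>0$. Substituting $x=s/\lambda_l$ and summing yields
$$-\sum_{l=1}^k\log\left(1+\frac{s}{\lambda_l}\right)<-s\sum_{l=1}^k\frac{1}{\lambda_l}+\frac{s^2}{2}\sum_{l=1}^k\frac{1}{\lambda_l^2}\le -s\sum_{l=1}^k\frac{1}{\lambda_l}+s^2\sum_{l=1}^k\frac{1}{\lambda_l^2}\,,$$
which is the stated right inequality (in fact a slightly sharper one, with $s^2/2$ in place of $s^2$).

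There is no genuine obstacle here; the computation is entirely routine once the MGF product formula is in place. The only point requiring a little care is selecting the elementary inequality with the right quadratic error term — the bound $\log(1+x)>x-x^2/2$ is what produces the $\sum 1/\lambda_l^2$ correction, and it is convenient (though not necessary) that it is tighter than what the statement demands, so no optimization of constants is needed. One should also note that strictness of both inequalities is preserved as long as $k\ge1$ and all $\lambda_l>0$, which is assumed.
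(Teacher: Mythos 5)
Your proposal is correct and follows essentially the same route as the paper: compute the moment generating function as the product $\prod_l \lambda_l/(\lambda_l+s)$ and then apply elementary two-sided bounds on the logarithm. The only cosmetic difference is that you use the Taylor bound $\log(1+x)>x-x^2/2$ where the paper uses $\log(1+x)\ge x/(1+x)\ge x-x^2$, so you even obtain a slightly sharper constant in the quadratic correction.
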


\begin{proof}
    Given $\E [e^{-sX_l} ]=\frac{\lambda_l}{\lambda_l+s}$, we calculate:
    \begin{align*}
        \E\left[e^{-sS}\right]=\prod_{l=1}^k\E^{-sX_l}=\prod_{l=1}^k\frac{\lambda_l}{\lambda_l+s}=\exp\left\{\sum_{l=1}^k\left(\log(\lambda_l)-\log(\lambda_l+s)\right)\right\}
    \end{align*}
    Moreover, since $\frac{d}{ds}\log(s)=\frac{1}{s}$, we can estimate
    \begin{equation*}
        \log(\lambda_l+s)-\log(\lambda_l)\le \frac{s}{\lambda_s}\,,
    \end{equation*}
    and analogously
    \begin{align*}
        \log(\lambda_l+s)-\log(\lambda_l)\ge \frac{s}{\lambda_l+s}=\frac{s}{\lambda_l}+s\left(\frac{1}{\lambda_l+s}-\frac{1}{\lambda_l}\right)\ge \frac{s}{\lambda_l}-\frac{s^2}{\lambda_l^2}\,,
    \end{align*}
    which completes the proof.
\end{proof}

A closer look at the proof reveals the following refinement of Theorem \ref{thm: loserSublin}
\begin{align*}
   0<\log(\P(X_1(\infty)>x))+d\sum_{k=X_1(0)}^x \frac{1}{F_1(k)}<d^2\sum_{k=X_1(0)}^x \frac{1}{F_1(k)^2}+const.
\end{align*}
for all $x\ge X_1(0)$. Let us discuss some interesting examples.

\begin{example}\label{example: loserWealthSublin}
In the situation of Theorem \ref{thm: loserSublin}, we consider various choices of the sub-linear feedback function $F_1$.
\begin{enumerate}
     \item Let $F_1(k)=k\log k$. Then 
     \begin{equation}\label{eq: loserTailxlogx}
         \P(X_1(\infty)>x)\asymp  (\log x)^{-d}\quad\text{for } x\to\infty\,.
     \end{equation}
     Hence, the tail decay is logarithmic and depends on $d$.
       \item Let $F_1(k)=k(\log k)^\beta$ for $\beta<1$. Then
       \begin{equation}\label{eq: loserTailBeta<1}
           \P(X_1(\infty)>x)\asymp e^{-\frac{d}{1-\beta}(\log x)^{1-\beta}}\quad\text{for }x\to\infty\,.
       \end{equation}
      Hence, $X_1(\infty)$ has a heavy tailed distribution, which is heavier for larger $\beta$. In particular, for the linear case $\beta=0$ we find a power law distribution with exponent depending on $\beta$ and the initial values included in $d$, i.e.
      \begin{equation}\label{eq: loserTailLinear}
          \P(X_1(\infty)>x)\asymp x^{-d}\quad\text{for }x\to\infty\,.
      \end{equation}
    \item Let $F_1(k)=k^\beta$ for $\frac12<\beta<1$. Then  
    $$\P(X_1(\infty)>x)\asymp e^{-\frac{d}{1-\beta}x^{1-\beta}}\quad\text{for }x\to\infty\,.$$
    Hence, the tail of $X_1(\infty)$ is lighter than a power law, but heavier than an exponential. Moreover, the tail is lighter the smaller $\beta$ is.
    \item Let $F_1(k)=k^{\frac{1}{2}}$. Then 
    $$e^{-\frac{d}{1-\beta}x^{1-\beta}}\prec \P(X_1(\infty)>x)\prec e^{-\frac{d}{1-\beta}x^{1-\beta}}x^{d^2}\quad\text{for }x\to\infty\,.$$
    The same observation as in 3. holds here.
    \item Let $F_1(k)=k^\beta$ for $0<\beta<\frac12$. Then  
    $$e^{-\frac{d}{1-\beta}x^{1-\beta}}\prec \P(X_1(\infty)>x)\prec e^{-\frac{d}{1-\beta}x^{1-\beta}+\frac{d^2}{1-2\beta}x^{1-2\beta}}\quad\text{for }x\to\infty$$
    consistent with 3. and 4..
    \item Let $F_1(k)=\lambda>0$ be constant. Then Theorem \ref{thm: loserSublin} is not applicable, but $\Xi_1$ is a homogeneous Poisson process. Hence, $X_1(\infty)$ has a Poisson distribution. 
\end{enumerate}
\end{example}

\begin{figure}
  \centering
  \subfloat[$X(0)=(1, 1)$ and varying $F_1$]{\includegraphics[width=0.5\linewidth]{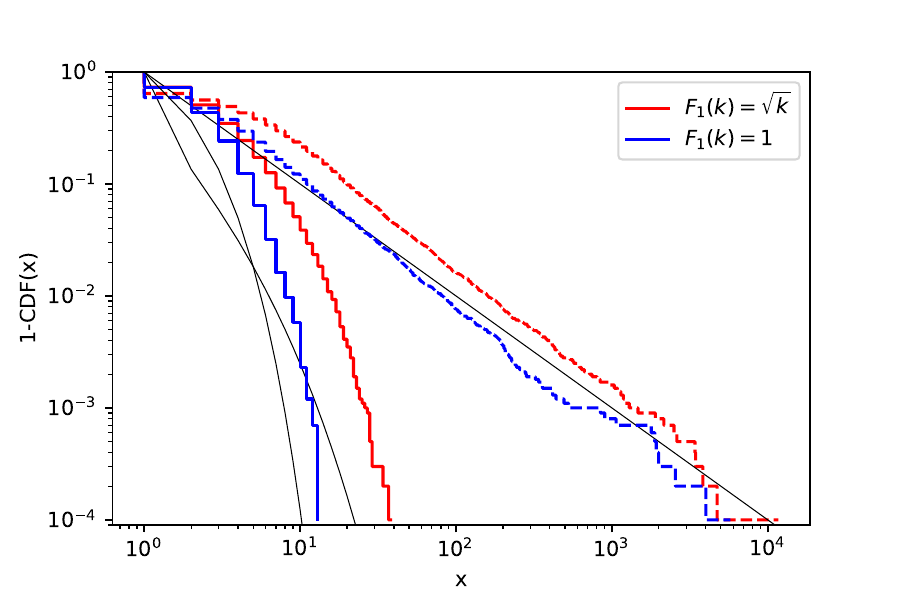}}
  \subfloat[$F_1(k)=k$ and varying $X(0)$]{\includegraphics[width=0.5\linewidth]{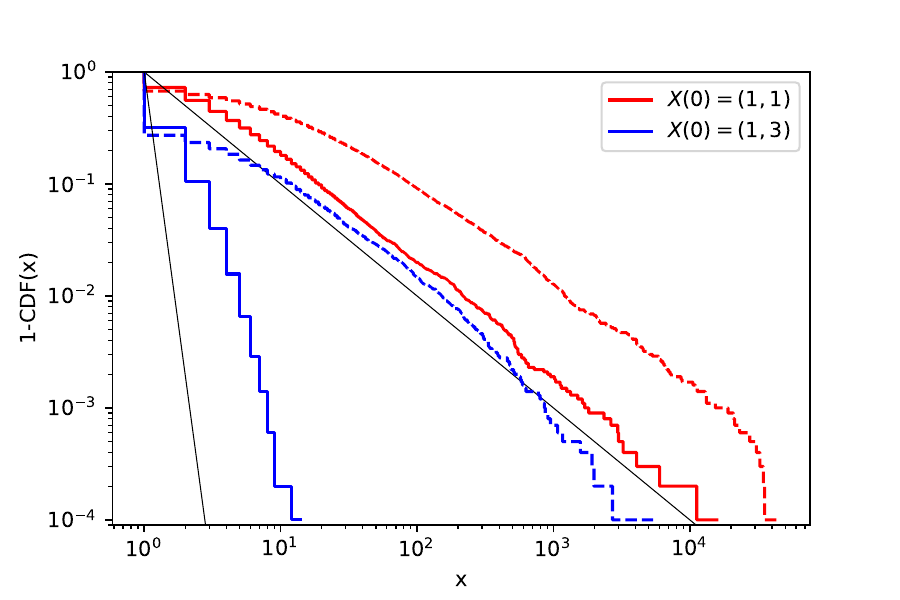}}
  \caption{Empirical distribution of $X_1(\infty)$ (full line) and $N_{mon}$ (dotted line) for $A=2$, $F_2(k)=k^2$ and different $F_1$ and $X(0)$. The black lines show the tail decay predicted by Theorem \ref{thm: loserSublin} and Corollary \ref{cor: MonTime}. Note that these predicted tails are only exact up to constants, i.e. up to a parallel shift in the plot. 10,000 simulations were executed each.}
  \label{figure: LoserSimulation2}
\end{figure}

 Remarkably, the tail decay of $X_i(\infty)$ depends significantly on initial values of super-linear agents for feedback $F_1$ close to the transition (\ref{eq: explosion}), in contrast to super-linear $F_1$ in Theorem \ref{thm: loserSuperlin}. Moreover, due to the assumption of monotonicity, the tail does not depend on the feedback function of other agents up to multiplicative constants. These findings are illustrated by the simulation shown in Figure \ref{figure: LoserSimulation2}. As mentioned in \eqref{eq ex1} and \eqref{eq ex2} and explained in Appendix \ref{sec: timeSublin}, the time when the monopoly occurs in the situation of this section has a power-law distribution (for homogeneous feedback) and the exponent depends on the feedback of the winner.\\

\section{Summary and discussion\label{sec summary}}

By combining Theorem \ref{thm: loserSuperlin} and Theorem \ref{thm: loserSublin}, we gain a precise understanding of the "loser paradox" mentioned in the introduction. This seeming paradox is that the tail of $X_i(\infty)$ (if necessary conditioned on $sMon_i^c$) is heaviest, when the feedback of agent $i$ is close to the transition (\ref{eq: explosion}). This observation has already been quantified by Example \ref{example: loserWealthSublin} and Example \ref{example: birth}. At the point of transition for almost linear feedback, these two examples are consistent. To see that, consider again $F_i(k)=k(\log k)^\beta$ and condition on agent $i$ to be a loser. Recall that throughout this paper we assumed that at least one other agent satisfies the monopoly condition (\ref{eq: explosion}).
\begin{enumerate}
    \item For $\beta\in(-\infty, 0)$, the tail of $X_i(\infty)$ is heavier than exponential, but lighter than a power law, see (\ref{eq: loserTailBeta<1}). This does also hold for $F_i(k)=k^{\beta'}$ with $\beta'\in (0, 1)$ (3. to 5. in Example \ref{example: loserWealthSublin}).
    \item In the linear case $\beta=0$, we have a power law tail $x^{-d}$ with exponent depending on initial values of the super-linear agents, see (\ref{eq: loserTailLinear}).
    \item  For $\beta\in(0, 1)$, the tail of  $X_i(\infty)$ is even heavier than a power law, but the decay is faster than logarithmic, see (\ref{eq: loserTailBeta<1}). The tail weight is increasing in $\beta<1$ and depends on the initial values.
    \item For $\beta\ge 1$, the tail of  $X_i(\infty)$ decays logarithmically,  i.e. $\P(X_1(\infty)>x)\asymp  (\log x)^{-d}$ for $\beta=1$, where $d$ depends on initial values, and $\P(X_1(\infty)>x)\asymp  (\log x)^{1-\beta}$ for $\beta>1$, see (\ref{eq: loserTailxlogx}) and (\ref{eq: loserTailbeta>1}). The tail weight is now decreasing in $\beta>1$, which is consistent with the power law distribution of $X_i(\infty)$ for $F_i(k)=k^{\beta'}$, $\beta'>1$, see (\ref{eq: loserTailSuperlin}). 
\end{enumerate}

As a main achievement of this paper, we have generalized previous results in \cite{Zhu, Oliveira, Cotar} about the tail of losing agents in non-linear Poly\'a urn models that exhibit strong monopoly. As a consequence, we get the above picture, which shows a non-monotonicity: The losers with feedback close to linear are most likely to win many steps. As an important application of these findings, we also extended results from \cite{Zhu, Cotar} on the time $N_{mon}$ of monopoly to systems with asymmetric feedback (Corollary \ref{cor: MonTimeSuperlin} and Corollary \ref{cor: MonTime}), allowing new insight in the large deviation behavior of asymmetric Pólya urns (Corollary \ref{cor: ShareMonTime}). On top of that, we provide a precise description of the positive tail dependence between several losing agents (Theorem \ref{thm: LoserCorrelation}), which does not vanish even for diverging system size $A\to\infty$ (Appendix \ref{sec: aDependence}). Nevertheless, in the limit $A\to\infty$ a fixed set of agents does not win a single step with high probability (Theorem \ref{thm: limA}).

\begin{footnotesize}
\bibliography{Birth_processes}
\bibliographystyle{plainnat}
\end{footnotesize}

\appendix

\section{Systems with many agents}

\subsection{General behaviour for $A\to\infty$}\label{sec: manyAgents}

In some applications of the generalized Pólya urn model, the number of agents is typically large, e.g. when then wealth distribution within a nation is modelled. Although our results on the wealth of agents hold for general finite $A$, we now take a closer look at the limiting case $A\to\infty$.\\

For that, we fix a sequence $F_1, F_2,\ldots$ of feedback functions and a sequence $X_1(0), X_2(0),\ldots\in\N$ of initial values. Denote by $(X^{(A)}(n))_{n\ge0}$ a generalized Pólya urn with $A$ agents and feedback functions $F_1,\ldots, F_A$ and initial values $X^{(A)}(0)=(X_1(0),\ldots, X_A(0))$. In addition, define
$$X^{(A)}_i(\infty)\coloneqq\lim_{n\to\infty}X_i^{(A)}(n)\in\N\cup\{\infty\}.$$
The approach is based on the exponential embedding described in Section \ref{sec: applicatonPolya}. So, let $\Xi_1, \Xi_2,\ldots$ be a sequence of independent birth processes with initial values $X_1(0), X_2(0),\ldots$ and rate functions $F_1, F_2,\ldots$. We denote by $\tau_i(k)$ the sojourn times of $\Xi_i$ and by  $T_i\coloneqq\sum_{k=X_i(0)}^\infty\tau_i(k)\in(0, \infty]$ the explosion time of $\Xi_i$. In the following, $\P$ denotes the law of the sequence of birth processes and for each $A\geq 1$ the process $(X^{(A)}(n))_{n\ge0}$ is then defined as in Section \ref{sec: applicatonPolya}.

First, we consider the case when infinitely many $F_i$ satisfy the monopoly condition (\ref{eq: explosion}). Theorem \ref{thm: loserSuperlin} describes the tail distribution of a loser among those agents. Although the tail decay rate does not depend on $A$, the constant prefactor vanishes for $A\to\infty$. Indeed, a typical agent does not even win a single step in large systems.

\begin{theorem}\label{thm: limA}
    Assume that there is an infinite subset $B\subset\N$, such that
    \begin{equation}\label{eq: AlimAssumption}
        \sum_{k=0}^\infty\max_{i\in B}\frac{1}{F_i(k)}<\infty.
    \end{equation}
   Then: 
    \begin{enumerate}
        \item The convergence
        $$\lim_{A\to\infty}\P\left(X^{(A)}_1(\infty)=X_1(0)\right)=1$$
        holds, i.e. $X^{(A)}_1(\infty)\xrightarrow{A\to\infty}X_1(0)$ in distribution.
        \item If in addition $\limsup_{i\to\infty}F_i(X_i(0))<\infty$, then 
        $$ \frac{1}{A}\#\{i\in[A]\colon X_i^{(A)}(\infty)=X_i(0)\}\xrightarrow{A\to\infty}1\quad\text{in distribution.}$$
        \item If $\liminf_{i\to\infty}F_i(X_i(0))>0$ and $\lim_{i\to\infty}\frac1iF_i(X_i(0)+1)=0$, then
         $$\#\{i\in[A]\colon X_i^{(A)}(\infty)\ne X_i(0)\}\xrightarrow{A\to\infty}\infty\quad\text{in distribution.}$$
    \end{enumerate}
\end{theorem}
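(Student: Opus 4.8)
The plan is to argue throughout in the exponential embedding of Section~\ref{sec: applicatonPolya}. Write $U_i:=\tau_i(X_i(0))$ for the first sojourn time of $\Xi_i$, keep $T_i$ for its explosion time, and put $S_i^{(A)}:=\min_{j\in[A]\setminus\{i\}}T_j$ and $T^{(A)}:=\min_{j\le A}T_j$, so that $X_i^{(A)}(\infty)=\Xi_i(S_i^{(A)})$ and, up to a null event, $\{X_i^{(A)}(\infty)=X_i(0)\}=\{U_i\ge S_i^{(A)}\}$. All three parts rest on one uniform estimate coming from \eqref{eq: AlimAssumption}: since $k\mapsto\max_{i\in B}1/F_i(k)$ is summable it is bounded and tends to $0$, so $c_0:=\inf_{i\in B,\,k\ge1}F_i(k)>0$ and $\sup_{i\in B}\sum_{k>X_i(0)+N}1/F_i(k)\to0$ as $N\to\infty$. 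Splitting $T_i=T_i(X_i(0)+N)+\bar T_i(X_i(0)+N)$, dominating the first summand (a sum of $N+1$ exponentials of rates $\ge c_0$) by a $\Gamma(N+1,c_0)$-variable and bounding the second by Markov's inequality yields, for every $\epsilon>0$, a constant $p(\epsilon)>0$ with $\inf_{i\in B}\P(T_i\le\epsilon)\ge p(\epsilon)$. By independence $\P\big(\min_{j\in B\cap[A]\setminus\{i\}}T_j>\epsilon\big)\le(1-p(\epsilon))^{\#(B\cap[A])-1}$, so $S_i^{(A)}\le\min_{j\in B\cap[A]\setminus\{i\}}T_j\to0$ in probability, uniformly in $i$.

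Part~1 then follows since $\P(X_1^{(A)}(\infty)=X_1(0))=\P(U_1\ge S_1^{(A)})\ge\P(U_1\ge\delta)-\P(S_1^{(A)}>\delta)$ for every $\delta>0$: let $A\to\infty$, then $\delta\to0$, and use $U_1>0$ a.s. For part~2 I would establish $L^1$-convergence to $0$ of $\tfrac1A\sum_{i=1}^A\mathbf 1[U_i<S_i^{(A)}]$, noting that with $K:=\sup_iF_i(X_i(0))<\infty$,
\[
\tfrac1A\sum_{i=1}^A\P\big(U_i<S_i^{(A)}\big)\le\tfrac1A\sum_{i=1}^A\Big(\P(U_i<\epsilon)+\P\big(S_i^{(A)}>\epsilon\big)\Big)\le K\epsilon+\sup_i\P\big(S_i^{(A)}>\epsilon\big)\xrightarrow{A\to\infty}K\epsilon ,
\]
and then letting $\epsilon\to0$; convergence in distribution to the constant $1$ is immediate.

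For part~3 I would first record the identity (up to null sets) $\#\{i\le A:X_i^{(A)}(\infty)\ne X_i(0)\}=\#\{i\le A:U_i<T^{(A)}\}$: for the monopolist $i^*$ one has $S_{i^*}^{(A)}>T_{i^*}$, hence $X_{i^*}^{(A)}(\infty)=\infty$, while $U_{i^*}<T_{i^*}=T^{(A)}$; and for $i\ne i^*$ one has $S_i^{(A)}=T^{(A)}$, so $X_i^{(A)}(\infty)\ne X_i(0)$ precisely when $\Xi_i$ has jumped by time $T^{(A)}$, i.e.\ when $U_i<T^{(A)}$. Next, discarding all sojourn times after the second, $T^{(A)}\ge\min_{j\le A}(U_j+R_j)=U_{j_1}+R_{j_1}$ with $R_j:=\tau_j(X_j(0)+1)$ and $j_1$ the (a.s.\ unique) minimiser; since any $i$ with $U_i\le U_{j_1}$ satisfies $U_i<U_{j_1}+R_{j_1}\le T^{(A)}$, this gives $\#\{i\le A:X_i^{(A)}(\infty)\ne X_i(0)\}\ge\#\{i\le A:U_i\le U_{j_1}\}$, i.e.\ the rank of $j_1$ among $(U_i)_{i\le A}$. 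As $\{\mathrm{rank}(j_1)<m\}=\{j_1\in\mathcal S_{m-1}^{(A)}\}$, where $\mathcal S_m^{(A)}$ is the set of the $m$ agents with smallest first sojourn time, it suffices to show $\P\big(j_1\in\mathcal S_m^{(A)}\big)\to0$ as $A\to\infty$ for each fixed $m$.

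This last statement is the main obstacle. Conditioning on $(U_j)_{j\le A}$ fixes $\mathcal S_m^{(A)}=\{a_1,\dots,a_m\}$ and keeps the $R_j$ independent with $R_j\sim\mathrm{Exp}(F_j(X_j(0)+1))$. Heuristically, on $\mathcal S_m^{(A)}$ the variables $U_{a_l}$ are far below the relevant scale, so $\min_{l\le m}(U_{a_l}+R_{a_l})\approx\min_{l\le m}R_{a_l}$ lives on the scale $\big(\sum_{l\le m}F_{a_l}(X_{a_l}(0)+1)\big)^{-1}$, whereas $\min_{j\le A}(U_j+R_j)$ lives on the much smaller scale $\big(\sum_{j\le A}F_j(X_j(0))F_j(X_j(0)+1)\big)^{-1/2}$ because $\P(U_j+R_j\le t)\le F_j(X_j(0))F_j(X_j(0)+1)\,t^2$; the hypotheses $\liminf_iF_i(X_i(0))>0$ (which keeps $\mathcal S_m^{(A)}$ a controlled, vanishing fraction, with $\max_{l\le m}U_{a_l}$ at most of order $A^{-1}$) and $F_i(X_i(0)+1)=o(i)$ together force the minimiser out of $\mathcal S_m^{(A)}$. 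To make this rigorous, since $F_i(X_i(0))$ is \emph{not} assumed bounded above, I would split on a slowly growing threshold $\theta_A\to\infty$: if infinitely many agents have $F_i(X_i(0))\ge\theta_A$ then these jump essentially at once and part~3 is trivial, and otherwise only finitely many do, the first sojourn scales are then comparable, and a union bound using the quadratic estimate above together with $F_j(X_j(0)+1)=o(j)$ pins the scale of $\min_j(U_j+R_j)$ and shows it exceeds $\max_{l\le m}U_{a_l}$ with probability tending to $1$. Parts~1 and~2, and the two reductions in part~3, are routine; all the genuine bookkeeping sits in this final estimate.
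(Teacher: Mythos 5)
Parts 1 and 2 of your proposal are correct and essentially reproduce the paper's argument: both rest on the observation that under \eqref{eq: AlimAssumption} the minimum of the explosion times over $B\cap[A]$ tends to zero (your Gamma/Markov domination is a cosmetic variant of the paper's single dominating variable $\tilde T$ in Lemma \ref{lem: limA}), and the passage to parts 1 and 2 via $\P(U_i<\epsilon)\le F_i(X_i(0))\epsilon$ matches the paper's computation with $\E e^{-F_i(X_i(0))T_{2:A}}$.

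Part 3 is where the proposal breaks down. Your two reductions (the identity $\#\{i:X_i^{(A)}(\infty)\ne X_i(0)\}=\#\{i:U_i<T^{(A)}\}$ and the bound by the rank of $j_1=\arg\min_j(U_j+R_j)$) are valid inequalities, but the sufficient condition you then announce, $\P\big(j_1\in\mathcal S_m^{(A)}\big)\to0$, is \emph{false} under the hypotheses of the theorem. Take $X_i(0)=1$ for all $i$, $B=\{2^k:k\ge1\}$ with $F_{2^k}(1)=e^{e^{2^k}}$, $F_{2^k}(2)=2^k/k$, $F_{2^k}(l)=l^2$ for $l\ge3$, and for $i\notin B$ set $F_i(1)=1$ and $F_i(l)=2^{-i}$ for $l\ge2$. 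All hypotheses hold ($\liminf_iF_i(X_i(0))=1$, $F_i(X_i(0)+1)=o(i)$, and \eqref{eq: AlimAssumption} with this $B$). With high probability the agent with the smallest $U_i$ is the largest power of two $P=2^K\le A$, and the minimiser of $U_j+R_j$ is a power of two with the $U$'s negligible against the $R$'s, so $\P(j_1=P)\to\frac{2^K/K}{\sum_{k\le K}2^k/k}\to\tfrac12$. Hence $\liminf_A\P(j_1\in\mathcal S_1^{(A)})\ge\tfrac12$ and the rank of $j_1$ does not diverge. The weaker statement your closing heuristic actually aims at, namely $\min_j(U_j+R_j)>\max_{l\le m}U_{a_l}$ with high probability, does imply the claim and is plausibly true (it holds in the example above), but you do not prove it: the quadratic bound $\P(U_j+R_j\le t)\le F_j(X_j(0))F_j(X_j(0)+1)t^2$ only controls $\min_j(U_j+R_j)$ from \emph{below}, and the comparison still requires an upper bound on the $m$-th order statistic $U_{(m)}$ on a matching scale; you defer exactly this to unexecuted ``bookkeeping'', together with an $A$-dependent threshold dichotomy that is not even well posed as stated. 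So part 3 has a genuine gap, and one branch of your strategy is a dead end.

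For contrast, the paper's proof of part 3 avoids the embedding entirely and is three lines: the probability that the first $k$ steps of the urn are won by $k$ distinct agents is at least $\prod_{n=1}^k\big(1-\tfrac{n\max_{i\le A}F_i(X_i(0)+1)}{(A-n)\min_{i\le A}F_i(X_i(0))}\big)$, which tends to $1$ for fixed $k$ precisely under $\liminf_iF_i(X_i(0))>0$ and $F_i(X_i(0)+1)=o(i)$. You may want to replace your part 3 by this discrete-time union-bound argument.
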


In other words: In the limit $A\to\infty$, almost all agents do not win a single step. Nevertheless, the absolute number of agents winning at least one step is unbounded. Note that (\ref{eq: AlimAssumption}) implies that infinitely many agents fulfill (\ref{eq: explosion}). Obviously, (\ref{eq: AlimAssumption}) is satisfied when infinitely many $F_i$ are equal and satisfy (\ref{eq: explosion}), a typical counterexample would be $F_i(k)=k^{1+\frac{1}{i}}$. The assumption (\ref{eq: AlimAssumption}) can be replaced by any other condition, that ensures that $\min_{i=1,\ldots, A}T_i\xrightarrow{A\to\infty}0$ in distribution. 

\begin{lemma}\label{lem: limA}
    Assume that (\ref{eq: AlimAssumption}) holds. Then:
\begin{equation}
\min_{i=1,\ldots, A}T_i\xrightarrow{A\to\infty}0\quad\text{in distribution.}
\end{equation}
\end{lemma}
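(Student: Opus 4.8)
The plan is to exhibit a single explosive birth-type random variable that stochastically dominates every $T_i$ with $i\in B$ simultaneously, and then exploit independence over the growing index set $[A]\cap B$. So the proof splits into: (i) construct the common dominating variable and show it charges every interval $[0,\varepsilon]$; (ii) prove the uniform stochastic domination; (iii) multiply out and let $A\to\infty$.

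For (i), set $c_k:=\inf_{i\in B}F_i(k)$ for $k\ge1$. Since $\max_{i\in B}\frac1{F_i(k)}=\frac1{c_k}$ appears as a summand of the convergent series in (\ref{eq: AlimAssumption}), each $c_k$ lies in $(0,\infty)$ and $\sum_{k\ge1}\frac1{c_k}<\infty$, so the rate function $c$ satisfies the explosion criterion (\ref{eq: explosion}); in particular each $F_i$ with $i\in B$ does as well, so $T_i<\infty$ almost surely for $i\in B$. Let $E_1,E_2,\dots$ be independent with $E_k\sim\mathrm{Exp}(c_k)$ and put $\tilde T:=\sum_{k\ge1}E_k$. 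Then $\tilde T<\infty$ a.s. by (\ref{eq: explosion}), and by Lemma \ref{lemma: gProperties}(1) its density is strictly positive on $(0,\infty)$, whence
$$\rho_\varepsilon:=\P(\tilde T\le\varepsilon)>0\qquad\text{for every }\varepsilon>0.$$

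For (ii), note that $F_i(k)\ge c_k$ means $\tau_i(k)\sim\mathrm{Exp}(F_i(k))$ is stochastically dominated by $E_k$; realizing both through a common uniform variable via $\tau_i(k)=-\log U_k/F_i(k)\le-\log U_k/c_k=E_k$ yields a coupling under which $T_i=\sum_{k\ge X_i(0)}\tau_i(k)\le\sum_{k\ge1}E_k=\tilde T$ almost surely, so $\P(T_i\le\varepsilon)\ge\rho_\varepsilon$ uniformly in $i\in B$. For (iii), using independence of the $(T_i)_i$ and $|[A]\cap B|\to\infty$ (as $B$ is infinite),
$$\P\Big(\min_{1\le i\le A}T_i>\varepsilon\Big)\le\P\Big(\min_{i\in[A]\cap B}T_i>\varepsilon\Big)=\prod_{i\in[A]\cap B}\P(T_i>\varepsilon)\le(1-\rho_\varepsilon)^{|[A]\cap B|}\xrightarrow{A\to\infty}0.$$
Since $\varepsilon>0$ is arbitrary and $T_i>0$ a.s., this gives $\min_{1\le i\le A}T_i\to0$ in probability, hence in distribution.

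The only genuinely delicate point is the first step: recognizing that assumption (\ref{eq: AlimAssumption}) does two jobs at once — it forces $\inf_{i\in B}F_i(k)>0$ at every fixed level $k$ (so the ``head'' of each $T_i$ cannot be pushed out) while keeping $\sum_k(\inf_{i\in B}F_i(k))^{-1}$ finite (so the dominating sum $\tilde T$ is itself explosive), which is exactly what is needed to build the common dominating variable and to invoke Lemma \ref{lemma: gProperties} for the positivity of $\rho_\varepsilon$. Everything after that is a routine monotone coupling plus the elementary product estimate. (One could alternatively conclude via the second Borel–Cantelli lemma that a.s. infinitely many $i\in B$ satisfy $T_i\le\varepsilon$, but the product bound above already delivers convergence in probability directly.)
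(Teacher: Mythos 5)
Your proof is correct and follows essentially the same route as the paper: both construct a single dominating random variable $\tilde T=\sum_k E_k$ with rates $\inf_{i\in B}F_i(k)$, use assumption (\ref{eq: AlimAssumption}) to get $\tilde T<\infty$ a.s.\ and $\P(\tilde T\le\varepsilon)>0$, and then conclude via independence and the product bound over the infinitely many indices in $B$. The only cosmetic differences are that you make the stochastic domination explicit through a uniform-variable coupling and cite Lemma \ref{lemma: gProperties} for the positivity of $\P(\tilde T\le\varepsilon)$, where the paper simply notes that a sum of exponentials charges every interval $(0,\varepsilon)$.
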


\begin{proof}
We fix $\epsilon>0$ and show that $\P\left(\min_{i=1,\ldots, A}T_i>\epsilon\right)\xrightarrow{A\to\infty}0$. For that, we take a sequence $(\tilde\tau(k))_k$ of independent random variables, where $\tilde\tau_i(k)$ is exponentially distributed with expectation $\max_{i\in B}\frac{1}{F_i(k)}$. Then $\tilde T\coloneqq\sum_{k=1}^\infty\tilde\tau(k)$ is almost surely finite due to assumption (\ref{eq: AlimAssumption}). Moreover, $\tilde T$ is stochastically bigger than all $T_i$ with $i\in B$. Using the independence of the $T_i$, we can now estimate
\begin{align*}
    \P\left(\min_{i=1,\ldots, A}T_i>\epsilon\right)&=\prod_{i=1}^A\P(T_i>\epsilon)\le\prod_{i\in B\atop i\le A}\P(T_i>\epsilon)\le\prod_{i\in B\atop i\le A}\P(\tilde T>\epsilon)\xrightarrow{A\to\infty}0
\end{align*}
since $B$ is infinite and $\P(\tilde T>\epsilon)<1$ as a sum of exponential random variables.
\end{proof}

Now, the independence property of the exponential embedding allows a canonical coupling of the processes $X^{(A)}$, which helps to prove Theorem \ref{thm: limA}.

\begin{proof}[Proof of Theorem \ref{thm: limA}]
    All independent random variables $\tau_i(k), i\ge1, k\ge1$ are defined on the same probability space with distribution $\P$. The sequence $(\min_{i=2,\ldots,A}T_i)_A$ is decreasing in $A$ and hence almost surely convergent with limit zero (due to Lemma \ref{lem: limA}). Now, fix a realisation. Then $\tau_1(X_1(0))>\min_{i=2,\ldots,A}T_i$ holds for $A$ large enough, i.e. $X_1^{(A)}(\infty)=X_1(0)$. Thus, $X_1^{(A)}(\infty)$ converges to $X_1(0)$ almost surely for $A\to\infty$, which implies part 1 of Theorem \ref{thm: limA}.

    For part 2, we observe that
    $$T_{2:A}\coloneqq\min\left\{T_i\colon i\in[A]\text{ and }T_i\ne\min_{j=1,\ldots,A}T_j\right\}\xrightarrow{A\to\infty}0\quad\text{almost surely}$$
    holds, too. Due to dominated convergence, we even have $\E e^{-cT_{2:A}}\xrightarrow{A\to\infty}1$ for any constant $c>0$. In addition, define $T_{[A]\backslash\{i\}}\coloneqq\min_{j\in[A]\backslash\{i\}}T_j$, which obviously satisfies $T_{[A]\backslash\{i\}}\le T_{2:A}\to0$. Now we are prepared for the final calculation:
    \begin{align*}
        \E&\left(\frac{\#\{i\in[A]\colon X_i^{(A)}(\infty)=X_i(0)\}}{A}\right)=\E\left(\frac1A\sum_{i=1}^A\mathds{1}_{X_i^{(A)}(\infty)=X_i(0)}\right)=\frac1A\sum_{i=1}^A\P\left(X_i^{(A)}(\infty)=X_i(0)\right)\\
        &=\frac1A\sum_{i=1}^A\E\left[\P\left(X_i^{(A)}(\infty)=X_i(0)\,\big|\,T_{[A]\backslash\{i\}}\right)\right]=\frac1A\sum_{i=1}^A\E\left[\P\left(\tau_i(X_i(0))>T_{[A]\backslash\{i\}}\,\big|\,T_{[A]\backslash\{i\}}\right)\right]\\
        &=\frac1A\sum_{i=1}^A\E \exp\left(-F_i(X_i(0))T_{[A]\backslash\{i\}}\right)\ge\frac1A\sum_{i=1}^A\E \exp\left(-F_i(X_i(0))T_{2:A}\right)\\ 
        &\ge\E \exp\left(-const.T_{2:A}\right)\xrightarrow{A\to\infty}1
    \end{align*}
    This is sufficient for 2. since $\frac{1}{A}\#\{i\in[A]\colon X_i^{(A)}(\infty)=X_i(0)\}\le1$.

    For part 3, we take any $k\in\N$ and show, that the probability, that the first $k$ steps are won by $k$ different agents, converges to one for $A\to\infty$:
    \begin{align*}
        \P&\left(\#\{i\in[A]\colon X_i^{(A)}(\infty)\ne X_i(0)\}\ge k\right)\\
        &\ge\P\left(X(n)-X(n-1)\ne X(m)-X(m-1)\text{ for all } n, m=1,\ldots, k,\,n\ne m\right)\\
        &=\prod_{n=1}^k\P\Big(\forall l<n\colon X(n)-X(n-1)\ne X(l)-X(l-1)\\
        &\qquad\qquad\Big|\,\forall l<m<n\colon X(m)-X(m-1)\ne X(l)-X(l-1)\Big)\\
        &\ge\prod_{n=1}^k\left(1-\frac{n\max_{i=1,\ldots,A}F_i(X_i(0)+1)}{(A-n)\min_{i=1,\ldots,A}F_i(X_i(0))}\right)\xrightarrow{A\to\infty}1
    \end{align*}
\end{proof}

The same coupling argument also implies, that all agents gain less the more agents there are in the system, i.e.
$$\P\left(X_i^{(A)}(\infty)\ge k\right)>\P\left(X_i^{(A+1)}(\infty)\ge k\right)\,.$$

Another important implication of of Theorem \ref{thm: limA} is a trivial asymptotic independence of agents in large systems.

\begin{corollary}\label{cor: trivialIndependence}
    Assume that (\ref{eq: AlimAssumption}) holds. Then we have for any $A_0\in\N$ and any Borel-sets $B_1,\ldots, B_{A_0}$ that
    $$\lim_{A\to\infty}\P(X_1^{(A)}(\infty)\in B_1,\ldots,X_{A_0}^{(A)}(\infty)\in B_{A_0})=\lim_{A\to\infty}\prod_{i=1}^{A_0} \P(X_i^{(A)}(\infty)\in B_i)\,.$$
\end{corollary}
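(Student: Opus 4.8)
The plan is to reduce the joint statement to the marginal convergence established in part 1 of Theorem \ref{thm: limA}. The key observation is that for a fixed finite set of agents $\{1,\ldots, A_0\}$, in the limit $A\to\infty$ each of them almost surely does not win a single step, so the joint event factorizes trivially into events determined by the initial values alone.

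First I would exploit the canonical coupling on a single probability space $\P$ from the proof of Theorem \ref{thm: limA}: all sojourn times $\tau_i(k)$ are realized simultaneously, and the process $X^{(A)}$ is obtained by running the exponential embedding with the first $A$ birth processes. Under assumption (\ref{eq: AlimAssumption}), Lemma \ref{lem: limA} gives $\min_{i=1,\ldots,A}T_i\to 0$ almost surely (the sequence is monotone decreasing), and more relevantly, for each fixed $i\le A_0$ the quantity $T_{[A]\backslash\{i\}}=\min_{j\in[A]\backslash\{i\}}T_j\to 0$ almost surely as $A\to\infty$. Since $\tau_i(X_i(0))>0$ is a fixed positive random variable, on the event $\{\tau_i(X_i(0))>0\}$ (which has probability one) we eventually have $\tau_i(X_i(0))>T_{[A]\backslash\{i\}}$, hence $X_i^{(A)}(\infty)=X_i(0)$ for all $A$ large enough. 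This holds simultaneously for $i=1,\ldots,A_0$ since it is a finite intersection of almost sure events.

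Consequently, as $A\to\infty$ the random vector $(X_1^{(A)}(\infty),\ldots,X_{A_0}^{(A)}(\infty))$ converges almost surely (hence in distribution) to the deterministic vector $(X_1(0),\ldots,X_{A_0}(0))$. Then for any Borel sets $B_1,\ldots,B_{A_0}$,
\begin{align*}
    \lim_{A\to\infty}\P\big(X_1^{(A)}(\infty)\in B_1,\ldots,X_{A_0}^{(A)}(\infty)\in B_{A_0}\big)
    &=\prod_{i=1}^{A_0}\mathds{1}_{X_i(0)\in B_i}
    =\prod_{i=1}^{A_0}\lim_{A\to\infty}\P\big(X_i^{(A)}(\infty)\in B_i\big),
\end{align*}
where the last equality uses part 1 of Theorem \ref{thm: limA} applied to each agent $i\le A_0$ (which gives $X_i^{(A)}(\infty)\to X_i(0)$ in distribution, so $\P(X_i^{(A)}(\infty)\in B_i)\to\mathds{1}_{X_i(0)\in B_i}$). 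Strictly speaking one should note that convergence in distribution to a constant gives convergence of probabilities of Borel sets whose boundary the limit avoids, but since the limit is a point mass at $X_i(0)$ and $\P(X_i^{(A)}(\infty)\in B_i)$ is dominated by $\P(X_i^{(A)}(\infty)\neq X_i(0))+\mathds{1}_{X_i(0)\in B_i}\to\mathds{1}_{X_i(0)\in B_i}$ from above while $\P(X_i^{(A)}(\infty)\in B_i)\ge \mathds{1}_{X_i(0)\in B_i}\,\P(X_i^{(A)}(\infty)= X_i(0))\to\mathds{1}_{X_i(0)\in B_i}$ from below, the convergence holds for every Borel set without any boundary condition.

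The main (and only mild) obstacle is making sure that the almost sure coupling argument covers all $A_0$ agents simultaneously and that one does not need any regularity of the sets $B_i$; both are handled by the sandwiching above, which works precisely because the limiting distribution is degenerate. No new estimates beyond Lemma \ref{lem: limA} and Theorem \ref{thm: limA}(1) are required.
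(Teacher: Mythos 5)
Your proof is correct and follows exactly the route the paper intends: the corollary is stated as a direct implication of Theorem \ref{thm: limA}, whose proof already establishes the almost sure convergence $X_i^{(A)}(\infty)\to X_i(0)$ under the canonical coupling, and your sandwiching argument correctly handles arbitrary Borel sets since the limit law is a point mass. Both sides of the claimed identity converge to $\prod_{i=1}^{A_0}\mathds{1}_{X_i(0)\in B_i}$, as you show, so nothing is missing.
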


In order to illustrate Theorem \ref{thm: limA}, we executed 100 simulations of a process with $A=100$ agents and $F_1(k)=\ldots=F_{100}(k)=k^2$. We interrupted the process after 100,000 steps. On average, 30.14 agents won at least one step of the process, with a minimum of 16 and a maximum of 45 agents. In a simulation with $A=10,000$ agents, interrupted after $10,000,000$ steps, 2010 agents won at least one step. Hence, the convergence in part 2 of Theorem \ref{thm: limA} can be considered as slow.

The following examples show that the assumptions of Theorem \ref{thm: limA} cannot be omitted.

\begin{example}
    \begin{enumerate}
        \item Assume that $\sum_{i=1}^\infty F_i(X_i(0))<\infty$, for example set $F_i(k)=2^{-i}k^\beta,\,\beta>1$ and  $X_i(0)=1$ for all $i\in\N$, such that (\ref{eq: AlimAssumption}) does not hold. Then the probability that agent 1 wins the first step of the process is $$\frac{F_1(X_1(0))}{\sum_{i=1}^A F_i(X_i(0))}\ge\frac{F_1(X_1(0))}{\sum_{i=1}^\infty F_i(X_i(0))}>0.$$ Thus, $\P\left(X^{(A)}_1(\infty)=X_1(0)\right)\ge const.$ does not converge to zero for $A\to\infty$
        \item Set $X_i(0)=1$ for all $i\in\N$. In order to construct an appropriate sequence of feedback functions, take a sequence $(c_i)_i\in(0,1)$, such that $\prod_{i=1}^\infty c_i>0$, e.g. $c_i=e^{-(i^{-2})}$. Moreover, define another sequence $(a_i)_i\in(0,\infty)$ by the recursion formula $a_{i}\ge \frac{c_i}{1-c_i}\left(\sum_{j=1}^{i-1}a_j+i\right),\,a_1=1$. Then consider the following sequence of feedback functions:
        $$F_i(k)=\begin{cases}
            a_i &\text{if }k=1\\
            1 &\text{if }k=2\\
            k^2 &\text{if }k\ge3
        \end{cases}$$
        Note that this sequence fulfills (\ref{eq: AlimAssumption}), but $\limsup_{i\to\infty}F_i(X_i(0))=\infty$. For any $A$, the probability that agent $i$ wins the $(A-i+1)$-th step of the process for all $i\ge\frac{A}{2}$ is given by
        $$\prod_{i=\lceil A/2\rceil}^A\frac{a_{i}}{\sum_{j=1}^ia_j+A-i}\ge\prod_{\lceil A/2\rceil}^A\frac{a_{i}}{\sum_{j=1}^ia_j+2i-i}\ge\prod_{i=\lceil A/2\rceil}^Ac_i\ge\prod_{i=1}^\infty c_i>0.$$
        Hence, $\P\left(\frac{1}{A}\#\{i\in[A]\colon X_i^{(A)}(\infty)=X_i(0)\}<\frac12\right)$ is bounded away from zero (uniformly in $A$).
    \end{enumerate}
\end{example}

Nevertheless, condition (\ref{eq: AlimAssumption}) is fulfilled in most generic situations as the following example underlines. Note that (\ref{eq: AlimAssumption}) does not depend on the initial values $X_i(0)$.

\begin{example}
\begin{enumerate}
    \item Let $F_i(k)=\alpha_ik^{\beta_i}$ for $\beta_i>0$. Then (\ref{eq: AlimAssumption}) is fulfilled if $\limsup_{i\to\infty}\min\{\alpha_i, \beta_i-1\}>0$.
    \item Let $F_i(k)=\alpha_i F(k)$ for a function $F$ satisfying (\ref{eq: explosion}). In addition, assume that the $\alpha_i>0$ are a realisation of an independent and identically distributed sequence of random variables. Then (\ref{eq: AlimAssumption}) is fulfilled, too.
\end{enumerate}
\end{example}

\subsection{Large systems with linear or sub-linear feedback}

In this subsection (and only here), we take a look at large systems where no agent satisfies the monopoly condition (\ref{eq: explosion}), such that all agents win infinitely many steps. We keep the notations from the previous Subsection \ref{sec: manyAgents} and denote by $\chi_i^{(A)}(n) =X_i^{(A)}(n) /\sum_{j\in [A]} X_j^{(A)}(n)$ the process of shares corresponding to $X_i^{(A)}(n)$. For the classical Pólya urn with linear feedback, it is well known that the long-time market shares
$$\chi^{(A)}_i(\infty)\coloneqq\lim_{n\to\infty}\chi_i^{(A)}(n)\in[0, 1]$$
exist and that $\left(\chi_1^{(A)}(\infty),\ldots,\chi_A^{(A)}(\infty)\right)$ has a Dirichlet distribution (see e.g. \cite{Freedman}). The following Theorem states that $X_i^{(A)}(n)$ is approximately exponentially distributed with mean $\frac{n}{A}+1$, when $A$ and $n$ is large and when all agents start with the same wealth at time zero. Moreover, any fixed number of agents is asymptotically independent for $A\to\infty$. Thus, the empirical common wealth distribution of all agents is asymptotically iid exponential and the share of the richest agent is approximately given by  $(\log A) /A$.

\begin{theorem}\label{thm: asymptoticExp}
    Let $F_1(k)=F_2(k)=\ldots=k$ and $X_1(0)=X_2(0)=\ldots=1$. Moreover, fix $A_0\in\N$ and $x_1,\ldots,x_{A_0}>0$. Then the following holds:
    \begin{align}\label{thm: asymptoticExp1}
         \P\left(A\cdot\chi_1^{(A)}(\infty)\le x_1,\ldots, A\cdot\chi_{A_0}^{(A)}(\infty)\le x_{A_0}\right)\xrightarrow{A\to\infty}\prod_{i=1}^{A_0}\left(1-e^{-x_i}\right)
    \end{align}:
   \begin{align}\label{thm: asymptoticExp2}
       \frac{A}{\log(A)}\max_{i=1,\ldots, A}\chi^{(A)}_i(\infty)\xrightarrow{A\to\infty}1\quad\text{in distribution}
   \end{align}
   \begin{align}\label{thm: asymptoticExp3}
       \sup_{x\ge0}\left|\frac{1}{A}\sum_{i=1}^A\mathds{1}_{\{A\chi^{(A)}_i(\infty)> x\}}-e^{-x}\right|\xrightarrow{A\to\infty}0 \quad\text{in distribution}
   \end{align}
\end{theorem}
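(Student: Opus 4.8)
The plan is to work entirely in the exponential embedding. Since $F_i(k)=k$ for all $i$ and $X_i(0)=1$, the process $\Xi_i$ is a standard Yule (simple birth) process started from $1$, and $X_i^{(A)}(n)=\Xi_i(t_n)$ where $t_n$ is the $n$-th jump time of the superposed process. The classical fact about the Yule process is that $e^{-t}\Xi_i(t)\to W_i$ almost surely as $t\to\infty$, where $W_i$ is $\mathrm{Exp}(1)$-distributed, and the $W_i$ are independent across $i$. Moreover $\sum_{j=1}^A e^{-t}\Xi_j(t)\to \sum_{j=1}^A W_j$ almost surely, and this sum is positive, so the shares converge:
\begin{equation*}
\chi_i^{(A)}(\infty)=\lim_{t\to\infty}\frac{\Xi_i(t)}{\sum_{j=1}^A\Xi_j(t)}=\frac{W_i}{W_1+\ldots+W_A}\,.
\end{equation*}
Thus $\big(\chi_1^{(A)}(\infty),\ldots,\chi_A^{(A)}(\infty)\big)$ has exactly the Dirichlet$(1,\ldots,1)$ distribution, recovering the classical result, and everything reduces to an analysis of $A W_i/(W_1+\ldots+W_A)$ with $W_j$ i.i.d.\ $\mathrm{Exp}(1)$.

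For \eqref{thm: asymptoticExp1}, write $\bar W_A:=\frac1A\sum_{j=1}^A W_j$; by the law of large numbers $\bar W_A\to 1$ almost surely. Then $A\chi_i^{(A)}(\infty)=W_i/\bar W_A$. For fixed $A_0$ and a bounded continuous test setting, the joint law of $(W_1,\ldots,W_{A_0})$ does not depend on $A$, while $\bar W_A\to 1$ in probability; a Slutsky-type argument gives $(W_1/\bar W_A,\ldots,W_{A_0}/\bar W_A)\to(W_1,\ldots,W_{A_0})$ in distribution, and the $W_i$ are i.i.d.\ $\mathrm{Exp}(1)$, so the joint CDF converges to $\prod_{i=1}^{A_0}(1-e^{-x_i})$. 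One small point to handle: $\bar W_A$ is \emph{not} independent of $(W_1,\ldots,W_{A_0})$, but since $\bar W_A = \frac{A_0}{A}\cdot\frac{1}{A_0}\sum_{i\le A_0}W_i + \frac{A-A_0}{A}\cdot\frac{1}{A-A_0}\sum_{i>A_0}W_i$ and the first term is $O(1/A)$, one can replace $\bar W_A$ by $\bar W_A':=\frac{1}{A-A_0}\sum_{i>A_0}W_i$ at asymptotically no cost, and $\bar W_A'$ \emph{is} independent of $(W_1,\ldots,W_{A_0})$; conditioning on $\bar W_A'$ then makes the computation immediate.

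For \eqref{thm: asymptoticExp2}, note $\frac{A}{\log A}\max_i\chi_i^{(A)}(\infty)=\frac{\max_i W_i}{(\log A)\,\bar W_A}$, and it is classical that $\max_{i\le A}W_i/\log A\to 1$ in probability for i.i.d.\ $\mathrm{Exp}(1)$ variables (indeed $\max_i W_i-\log A$ converges in distribution to a Gumbel law, so dividing by $\log A$ gives $1$); combined with $\bar W_A\to1$ this yields the claim. For \eqref{thm: asymptoticExp3}, the quantity is $\sup_{x\ge0}\big|\frac1A\sum_{i=1}^A\mathbf 1_{\{W_i/\bar W_A>x\}}-e^{-x}\big|$. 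First, by Glivenko--Cantelli the empirical tail $\frac1A\sum_i\mathbf 1_{\{W_i>x\}}$ converges uniformly in $x$ to $e^{-x}$ almost surely; then one absorbs the factor $\bar W_A$ using $\frac1A\sum_i\mathbf 1_{\{W_i>x\bar W_A\}}$, controlling the difference $\sup_x|e^{-x\bar W_A}-e^{-x}|$, which tends to $0$ since $\bar W_A\to1$ (uniformity in $x\ge0$ follows because $x\mapsto e^{-cx}$ for $c$ near $1$ has uniformly small oscillation near $c=1$ — concretely $\sup_{x\ge 0}|e^{-cx}-e^{-x}|\to 0$ as $c\to 1$). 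The main obstacle is the bookkeeping for the dependence between the ``test'' coordinates and the normalizing sum in \eqref{thm: asymptoticExp1}, and in \eqref{thm: asymptoticExp3} making the replacement of $e^{-x}$ by $e^{-x\bar W_A}$ genuinely uniform in $x$; both are handled by the elementary estimates just described once the reduction to i.i.d.\ exponentials via the Yule martingale limit is in place.
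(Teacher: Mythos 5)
Your proposal is correct, and it is worth comparing with the paper's route. The paper proves \eqref{thm: asymptoticExp1} by using that $\chi^{(A)}(\infty)$ is uniform on the simplex $\Delta_{A-1}$ and then evaluating the finite-$A$ probability as an explicit iterated integral, ending with an inclusion--exclusion sum $\sum_{\{i_1,\ldots,i_k\}\subset[A_0]}(-1)^k\bigl(1-\tfrac{x_{i_1}+\ldots+x_{i_k}}{A}\bigr)^A\to\prod_i(1-e^{-x_i})$; you instead derive the representation $A\chi_i^{(A)}(\infty)=W_i/\bar W_A$ with i.i.d.\ $\mathrm{Exp}(1)$ limits of the Yule martingales and conclude by the law of large numbers plus a Slutsky argument. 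Your route is softer (no exact computation needed) and has the advantage of treating all three claims through one representation; the paper's exact integral gives, as a by-product, a closed form for the finite-$A$ probability. For \eqref{thm: asymptoticExp2} the paper also passes to the representation $Z_i/\sum_j Z_j$ (via the Dirichlet distribution) and declares the rest ``easy to check''; your Gumbel/LLN argument is exactly the missing verification. For \eqref{thm: asymptoticExp3} the paper invokes the a.s.\ limit $e^{-t}\Xi_i(t)\to W_i$ and Glivenko--Cantelli, but is slightly loose: the quantities $A\chi_i^{(A)}(\infty)=W_i/\bar W_A$ are not themselves i.i.d.\ exponentials, and your extra step bounding $\sup_{x\ge0}|e^{-x\bar W_A}-e^{-x}|$ (which indeed tends to $0$ since $\sup_{x\ge0}|e^{-cx}-e^{-x}|\le \frac{|c-1|}{e\min\{c,1\}}$) is precisely what is needed to make that argument airtight. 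One small point you should state explicitly: the identification $\chi_i^{(A)}(\infty)=W_i/\sum_j W_j$ uses that the discrete-time limit equals the continuous-time limit, which holds because with linear feedback no $\Xi_i$ explodes, so the jump times $t_n\to\infty$ almost surely.
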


\begin{proof}
    For (\ref{thm: asymptoticExp1}), suppose that $A$ is large enough, such that $\sum_{i=1}^{A_0}\frac{x_i}{A}<1$. It is well known, that $\left(\chi_1^{(A)}(\infty),\ldots,\chi_A^{(A)}(\infty)\right)$ is uniformly distributed on the simplex $\Delta_{A-1}$ for all $A$. Then:
\begin{align*}
    &\P\left(A\cdot\chi_1^{(A)}(\infty)\le x_1,\ldots, A\cdot\chi_{A_0}^{(A)}(\infty)\le x_{A_0}\right)= \P\left(\chi_1^{(A)}(\infty)\le \frac{x_1}{A},\ldots, \chi_{A_0}^{(A)}(\infty)\le \frac{x_{A_0}}{A}\right)=\\
    &=\int_0^{x_1/A}\cdots\int_0^{x_{A_0}/A}\int_0^{1-y_1-\ldots-y_{A_0}}\cdots\int_0^{1-y_1-\ldots-y_{A-1}}(A-1)!\,dy_A\ldots dy_1\\
    &=(A-1)!\int_0^{x_1/A}\cdots\int_0^{x_{A_0}/A}\int_0^{1-y_1-\ldots-y_{A_0}}\cdots\int_0^{1-y_1-\ldots-y_{A-2}}(1-y_1-\ldots-y_{A-1})\,dy_{A-1}\ldots dy_1\\
    &=\frac{(A-1)!}{2}\int_0^{x_1/A}\cdots\int_0^{x_{A_0}/A}\int_0^{1-y_1-\ldots-y_{A_0}}\cdots\int_0^{1-y_1-\ldots-y_{A-3}}(1-y_1-\ldots-y_{A-2})^2\,dy_{A-2}\ldots dy_1\\
    &=\frac{(A-1)!}{(A-A_0)!}\int_0^{x_1/A}\cdots\int_0^{x_{A_0}/A}(1-y_1-\ldots-y_{A_0})^{A-A_0}\,dy_{A_0}\ldots dy_1\\
    &=\frac{(A-1)!}{(A-A_0+1)!}\int_0^{x_1/A}\cdots\int_0^{x_{A_0-1}/A}(1-y_1-\ldots-y_{A_0-1})^{A-A_0+1}\\
    &\qquad -(1-y_1-\ldots-y_{A_0-1}-\frac{x_{A_0}}{A})^{A-A_0+1}\,dy_{A_0-1}\ldots dy_1\\
    &=\sum_{\{i_1,\ldots,i_k\}\subset[A_0]}(-1)^{k}\left(1-\frac{x_{i_1}}{A}-\ldots-\frac{x_{i_k}}{A}\right)^A=\\
    &\xrightarrow{A\to\infty}\sum_{\{i_1,\ldots,i_k\}\subset[A_0]}(-1)^{k}\exp\left(-\sum_{l=1}^kx_{i_l}\right)=\sum_{\{i_1,\ldots,i_k\}\subset[A_0]}(-1)^{k}\prod_{l=1}^ke^{-x_{i_l}}=\prod_{i=1}^{A_0}\left(1-e^{-x_i}\right)
\end{align*}
Next, (\ref{thm: asymptoticExp2}) follows from the fact that $\chi^{(A)}(\infty)\in\Delta_{A-1}$ has a Dirichlet distribution with parameter $X(0)$, which allows for the following representation from \cite[Theorem 1.1]{feng}: Let $Z_1,\ldots, Z_A$ be independent, exponentially distributed random variables with expectation $1$. Then
$$\left(\frac{Z_i}{Z_1+\ldots+Z_A}\right)_{i\in[A]}$$
has also a Dirichlet distribution with parameter $(1,\ldots, 1)$, such that (\ref{thm: asymptoticExp2}) is easy to check.

Finally, (\ref{thm: asymptoticExp3}) follows from \cite[Theorem 2.17]{feng}, which states that in the exponential embedding $e^{-t}\,\Xi_i(t)$ converges for $t\to\infty$ almost surely to an exponentially distributed random value for all $i\in[A]$. Hence, the left hand side of (\ref{thm: asymptoticExp3}) is the empirical distribution function of an independent sample of $A$ exponential distributions. Then the claim is a consequence of the Glivenko–Cantelli theorem.
\end{proof}

For $X(0)=(1, \ldots, 1)$ and large $A$, the shares $\chi^{(A)}(n)$ do barely change in each step, such that no agent wins a significant share on the long run. This can be fixed by replacing the state space of the process $X^{(A)}(n)$ by $(0, \infty)^A$, while keeping all definitions of Section \ref{sec: applicatonPolya} mutatis mutandis. If $X(0)=\left(\frac1A, \ldots,\frac1A\right)$, then $\chi^{(A)}(\infty)$ still has a Dirichlet distribution with parameter $X(0)$. According to \cite[Theorem 2.1]{feng}, $\chi^{(A)}(\infty)$ converges for $A\to\infty$ towards a Poisson-Dirichlet distribution (up to sorting).\\

Finally, we take a quick look at the sub-linear case. As derived in e.g. \cite{wir}, we know that the long-time market shares $\chi_i^{(A)}(\infty)$ are deterministic and independent of the initial values. In order to bring some randomness to our model, we assume that all agents have the same feedback function up to some random attractiveness parameter $\alpha_i$. When $A$ and $n$ is large, then the distribution of $X_i^{(A)}(n)$ is approximately determined by the distribution of the $\alpha_i$ up to scaling and a distortion depending on $F$. Moreover, the agents are asymptotically independent. The following proposition formalizes this idea.

\begin{proposition}
    Let $F_i(k)=\alpha_iF(k)$ for all $i, k\in\N$, where $F$ fulfills 
    $$\frac{F_i(k)}{k}\sum_{l=1}^k\frac{1}{F_i(l)}\xrightarrow{k\to\infty}c\in(0, \infty)\,.$$
    In addition, suppose that $(\alpha_i)_{i\in\N}$ is a sequence of independent, positive and identically distributed random variables with $\E\alpha_i^c\in(0, \infty]$. Fix $A_0\in\N$. Then:
    $$\left(A\cdot\chi_1^{(A)}(\infty),\ldots,A\cdot\chi_{A_0}^{(A)}(\infty)\right)\xrightarrow{A\to\infty}\left(\frac{\alpha_1^c}{\E\alpha_1^c},\ldots, \frac{\alpha_{A_0}^c}{\E\alpha_1^c}\right)\quad\text{in distribution.}$$
\end{proposition}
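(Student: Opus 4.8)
The plan is to use the exponential embedding together with the known distributional description of sub-linear Pólya urns and an explicit formula for the limiting share in terms of the attractiveness parameters. As derived in \cite{wir}, for sub-linear feedback of the form $F_i(k)=\alpha_i F(k)$ the long-time shares satisfy $\chi_i^{(A)}(\infty) = f(\alpha_i)/\sum_{j\in[A]} f(\alpha_j)$ almost surely for a deterministic function $f$ depending only on $F$; the first step is to identify $f(\alpha)=\alpha^c$ (up to a multiplicative constant that cancels in the ratio). Concretely, in the exponential embedding one has $X_i^{(A)}(n)=\Xi_i(t_n)$ where $\Xi_i$ is a birth process with rate function $\alpha_i F$, and the growth rate of $\Xi_i(t)$ as $t\to\infty$ is governed by inverting $t\mapsto \sum_{l=1}^{k}\frac{1}{\alpha_i F(l)}$. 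The assumption $\frac{F(k)}{k}\sum_{l=1}^k\frac{1}{F(l)}\to c$ is exactly a regular-variation statement: it forces $\sum_{l=1}^k \frac{1}{F(l)} \sim \frac{k}{cF(k)}$, so that $\Xi_i(t)\asymp \phi(\alpha_i t)$ for a regularly varying $\phi$ with index $1/c$ — equivalently $\Xi_i(t)\sim \alpha_i^{c}\,\psi(t)$ with the same $\psi$ for all $i$, by Karamata/inverse-function arguments. Hence $\chi_i^{(A)}(\infty)=\lim_{t\to\infty}\Xi_i(t)/\sum_j \Xi_j(t) = \alpha_i^{c}/\sum_{j\in[A]}\alpha_j^{c}$ almost surely.

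Given that identity, the theorem reduces to a law-of-large-numbers statement. First I would condition on the sequence $(\alpha_i)_i$ and write
\begin{equation*}
A\cdot\chi_i^{(A)}(\infty) = \frac{\alpha_i^{c}}{\frac{1}{A}\sum_{j=1}^{A}\alpha_j^{c}}\,.
\end{equation*}
Since $(\alpha_j^{c})_{j}$ is an i.i.d.\ sequence, the strong law of large numbers gives $\frac{1}{A}\sum_{j=1}^{A}\alpha_j^{c}\to \E\alpha_1^{c}$ almost surely; when $\E\alpha_1^{c}=\infty$ the same holds with limit $+\infty$, so $A\chi_i^{(A)}(\infty)\to 0$, which is the degenerate case also covered by the stated limit (interpreting $\alpha_i^c/\infty=0$). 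For any fixed $A_0$, the first $A_0$ numerators $(\alpha_1^{c},\ldots,\alpha_{A_0}^{c})$ are unchanged as $A\to\infty$, so
\begin{equation*}
\left(A\chi_1^{(A)}(\infty),\ldots,A\chi_{A_0}^{(A)}(\infty)\right)
= \frac{1}{\frac{1}{A}\sum_{j=1}^{A}\alpha_j^{c}}\left(\alpha_1^{c},\ldots,\alpha_{A_0}^{c}\right)
\xrightarrow{A\to\infty}\frac{1}{\E\alpha_1^{c}}\left(\alpha_1^{c},\ldots,\alpha_{A_0}^{c}\right)
\end{equation*}
almost surely, hence in distribution, which is the claim. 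A small point to handle carefully: the denominator $\frac{1}{A}\sum_{j=1}^A \alpha_j^c$ includes the first $A_0$ terms, but their contribution is $O(1/A)\to 0$, so replacing $\sum_{j=1}^A$ by $\sum_{j=A_0+1}^A$ (which is independent of the numerators) changes nothing in the limit; this also makes the independence structure transparent if one prefers to argue convergence in distribution directly rather than almost surely.

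The main obstacle is the first step — rigorously extracting the exponent $c$ and the common slowly-varying factor from the hypothesis $\frac{F(k)}{k}\sum_{l=1}^k\frac{1}{F(l)}\to c$, i.e.\ showing that the limiting share really is proportional to $\alpha_i^{c}$. This requires a careful inverse-function / Karamata argument: from $\sum_{l=1}^k\frac{1}{\alpha_i F(l)}\sim \frac{k}{c\,\alpha_i F(k)}$ one must deduce that the first-passage time $T_i(k)=\sum_{l=1}^k \tau_i(l)$, which concentrates around its mean by a second-moment estimate (the variance $\sum_l \frac{1}{(\alpha_i F(l))^2}$ is of lower order, exactly as in Lemma~\ref{lemma: MGFbound} and the proof of Theorem~\ref{thm: loserSublin}), has asymptotic inverse $\Xi_i(t)\sim\alpha_i^{c}\psi(t)$. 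Once the ratio $\Xi_i(t)/\Xi_j(t)\to(\alpha_i/\alpha_j)^{c}$ is established, everything else is the routine LLN computation above. If one is willing to simply invoke the deterministic-share result of \cite{wir} in the precise form $\chi_i^{(A)}(\infty)=\alpha_i^{c}/\sum_j\alpha_j^{c}$, then there is essentially no obstacle and the proof is the short display chain given above.
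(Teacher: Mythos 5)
Your proposal is correct and follows essentially the same route as the paper: the paper simply quotes from \cite{wir} the almost-sure identity $\chi_i^{(A)}(\infty)=\bigl(1+\alpha_i^{-c}\sum_{j\ne i}\alpha_j^c\bigr)^{-1}=\alpha_i^c/\sum_{j\in[A]}\alpha_j^c$ and then applies the strong law of large numbers, exactly as in your final display. Your additional Karamata-style sketch of how the exponent $c$ arises is not needed (and not carried out in the paper), since the share formula is taken as a black box from \cite{wir}.
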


\begin{proof}
    From \cite{wir}, we know that $\chi_i^{(A)}(\infty)=\left(1+\alpha_i^{-c}\sum_{j\ne i}\alpha_j^c\right)^{-1}$ almost surely. Hence, 
    $$A\cdot \chi_i^{(A)}(\infty)=\left(\frac1A+\alpha_i^{-c}\frac1A\sum_{j\ne i}\alpha_j^c\right)^{-1}\xrightarrow{A\to\infty}\frac{\alpha_i^c}{\E\alpha_1^c}\quad\text{almost surely}$$
    due to the law of large numbers.
\end{proof}

In particular, if the $\alpha_i$ have a heavy tailed distribution, such that $\E\alpha_i^c=\infty$, then $A\cdot\chi_1^{(A)}(\infty)\xrightarrow{A\to\infty}0$. A possible interpretation of that is that the wealth of a typical agent grows slower than the whole economy. Vice versa, most of the wealth is hold by the most attractive agents.

Now assume that $\alpha_i$ has finite moments and that $F(k)=k^\beta,\,\beta<1$, such that $c=\frac{1}{1-\beta}$. Let $A$ and $n$ be large. If $\beta=0$, then the distribution of $X_i^{(A)}(n)$ coincides with the distribution of $\alpha_i$ up to scaling.  This means that any distribution of wealth can be obtained only by differences in the attractiveness of the agents and without increasing returns. When $\beta$ gets closer to one, then the distribution of $X_i^{(A)}(n)$ gets more unequal. This is consistent with the idea of weak monopoly for the linear case with different attractiveness parameters (see \cite{wir}).

Generalizations of the Pólya urn model to infinitely many agents have also been studied recently in \cite{Bandyopadhyay} or \cite{Janson}.

\subsection{Dependence of $c(A, a)$ on $A$ and $a$}\label{sec: aDependence}

 In Subsection \ref{subsec: loserCor}, we introduced the constant $c(A, a)$ as a measure for the tail dependence of the wealth of $a$ super-linear losers in a system of $A>a$ agents in total. We have already discussed that $c(A, a)>1$ in a symmetric situation, corresponding to a positive correlation of losers. This section examines the dependence of $c(A, a)$ on $A$ and $a$, starting with $A$.\\

  $c(A, a)$ can by computed numerically via formula (\ref{eq: cFormula}), since Lemma \ref{lemma: zhu} provides explicit expressions for $g_i$ and $G_i$ and, consequently, also for the density of $S$. Hence, the expectations in (\ref{eq: cFormula}) can be well approximated by Riemann sums. In the symmetric case, the density of $S$ is simply given by $(A-a)G(s)^{A-a-1}g(s)$, $s\ge0$. The following table shows approximations of $c(A, 2)$ for different feedback functions and system sizes in the symmetric case $F_1=\ldots=F_A=F$ and $X(0)=(1,\ldots, 1)$. The formula from Lemma \ref{lemma: zhu} was truncated after 100 summands. For the Riemann approximation of the expectation integral, we took a bandwidth of $0.0001$ and computed the integral on the interval $[0, 50]$ (c.f. Figure \ref{figure: ParetoFaktor})

\begin{center}
\begin{tabular}{ c | c c c c c }
c(A, 2) & $A=3$ & $A=10$ & $A=100$ & $A=10^3$ & $A=10^6$\\\hline
$F(k)=k^2$ & 1.121 & 1.227 & 1.427 & 1.565 & 1.754 \\
$F(k)=e^{k-1}$ & 1.130 & 1.218 & 1.374 & 1.480 & 1.634 \\
$F(k)=k(\log(e-1+k))^2$ & 1.141 & 1.254 & 1.460 & 1.597 & 1.779
\end{tabular}
\end{center}

The table confirms our finding $c(A, a)>1$ from Proposition \ref{prop: c}. Moreover, we observe that $c(A, 2)$ does not strongly depend on the feedback function, but tends to be larger for slowly increasing $F$. Remarkably, $c(A, 2)$ seems to be slightly increasing in $A$. Thus, the tail dependence of agents seems to be stronger in larger systems, as opposed to the classical Pólya urn with linear feedback, where one can show an asymptotic independence of agents for $A\to\infty$ (see Appendix \ref{sec: manyAgents}). Heuristically, this phenomenon may be explained as follows: First, we observe the implication
\begin{equation}\label{eq: T2(x)}
    X_2(\infty)\ge x_2\quad\Longrightarrow\quad S>T_2(x_2)\coloneqq\sum_{k=X_2(0)}^{x_2-1}\tau_2(k)
\end{equation}
and that the first passage time $T_2(x_2)$ is independent of $A$ and $S$. Since $S=S(A)$ is decreasing in $A$, the event $S>T_2(x_2)$ is less likely for larger $A$. Hence, conditioning 
the distribution of $S$ on the increasingly unlikely event $X_2(\infty)\ge x_2$ has a stronger effect for larger $A$. So $S$ is more likely to be untypically large 
and hence $X_1(\infty)$ is slightly more likely to be large as well. The following Proposition and Conjecture \ref{conj: c} underline this idea.

\begin{proposition}\label{prop: increasingDependence}
    Assume $X_1(0)=X_2(0)=\ldots$ and that $F_1=F_2=\ldots$ satisfy (\ref{eq: explosion}). Define $S=S(A)\coloneqq\min_{i=3,\ldots, A}T_i$. Then we have for all $s>0$ and $x_2> X_2(0)$
    $$\frac{\P_{A, 2}(S(A)>s\,|\, X_2(\infty)\ge x_2)}{\P_{A, 2}(S(A)>s)}\to \infty\quad\text{for }A\to\infty\,,$$
    but still
    $$\P_{A, 2}(S(A)>s\,|\, X_2(\infty)\ge x_2)\to0\quad\text{for }A\to\infty\,.$$
\end{proposition}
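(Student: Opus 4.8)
## Proof proposal for Proposition \ref{prop: increasingDependence}

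The plan is to compute both ratios directly using the exponential embedding, the independence structure it provides, and the tail asymptotics from \eqref{eq: Ttail}. The key structural observation is \eqref{eq: T2(x)}: conditioning on $X_2(\infty)\ge x_2$ amounts to conditioning on the event $\{S(A)>T_2(x_2)\}$, where $T_2(x_2)=\sum_{k=X_2(0)}^{x_2-1}\tau_2(k)$ is a fixed, $A$-independent random variable with a density supported on $(0,\infty)$, independent of $S(A)$. So I would first write, using independence of $S(A)$ and $T_2(x_2)$,
\begin{align*}
    \P_{A,2}(S(A)>s\mid X_2(\infty)\ge x_2) &= \frac{\P(S(A)>s,\,S(A)>T_2(x_2))}{\P(S(A)>T_2(x_2))}\\
    &= \frac{\E\big[\P(S(A)>\max\{s,T_2(x_2)\}\mid T_2(x_2))\big]}{\E\big[\P(S(A)>T_2(x_2)\mid T_2(x_2))\big]}\,,
\end{align*}
and note that on the event $sMon_1^c\cap sMon_2^c$ the measure $\P_{A,2}$ only reweights by bounded factors that do not affect the leading order (one should check this, but since $\P(sMon_1^c\cap sMon_2^c)$ is bounded away from $0$ and $1$ uniformly in $A\ge A_0$ for fixed symmetric feedback, or handle it via the explicit product formula $\P_{A,2}=\P(\,\cdot\cap sMon_1^c\cap sMon_2^c)/\P(sMon_1^c\cap sMon_2^c)$, this is routine).

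Next I would use the tail estimate for $S(A)=\min_{i=3,\dots,A}T_i$. By independence and \eqref{eq: Ttail}, for fixed symmetric feedback $F$ with $F(X_2(0))=:\lambda$ and the appropriate constant $C=\prod_{l\ge X_2(0)+1}F(l)/(F(l)-\lambda)$, we have $\P(T_i>u)\sim Ce^{-\lambda u}$ as $u\to\infty$, hence $\P(S(A)>u)=\P(T_3>u)^{A-2}\sim C^{A-2}e^{-(A-2)\lambda u}$ for large $u$, and more usefully $\P(S(A)>u)\le (\,\text{const}\,)^{A-2}e^{-(A-2)\lambda u}$ for all $u\ge 0$ by the crude bound $\P(T_i>u)\le c\,e^{-\lambda u}$ from \eqref{eq: LoserSublinEst}, together with a matching lower bound $\P(T_i>u)\ge e^{-\lambda u}$ from $\P(\tau_i(X_2(0))>u)$. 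Plugging this into numerator and denominator, the key point is that the exponential decay rate $(A-2)\lambda$ grows with $A$, so that $\P(S(A)>\max\{s,T_2(x_2)\})$ and $\P(S(A)>T_2(x_2))$ are both dominated, as $A\to\infty$, by the region where $T_2(x_2)$ is \emph{smallest}, i.e.\ near the left endpoint $X_2(0)$ of its support (which lies strictly below $s$ once $x_2>X_2(0)$, so that $\max\{s,T_2(x_2)\}=s$ there). A Laplace-type / dominated-ratio argument then gives
\[
\frac{\P_{A,2}(S(A)>s\mid X_2(\infty)\ge x_2)}{\P_{A,2}(S(A)>s)}
\;\sim\; \frac{\E[\P(S(A)>s)\cdots]/\P(S(A)>s)}{\E[\P(S(A)>T_2(x_2))\cdots]}
\longrightarrow \infty,
\]
because the denominator $\E[\P(S(A)>T_2(x_2))]$ decays only like $e^{-(A-2)\lambda\cdot(\text{essinf}\,T_2(x_2))}$ with $\text{essinf}\,T_2(x_2)$ strictly smaller than any fixed $s$ we might compare against — wait, more carefully: one compares $\P_{A,2}(S(A)>s\mid X_2(\infty)\ge x_2)$, whose numerator is governed by $\min(s,\text{essinf}\,T_2(x_2))$-type behaviour, against the unconditioned $\P_{A,2}(S(A)>s)\sim C^{A-2}e^{-(A-2)\lambda s}$, and the gain comes precisely from the fact that conditioning on the rare event $\{S(A)>T_2(x_2)\}$ (probability $\asymp$ that of $S(A)$ exceeding the small value $\text{essinf}\,T_2(x_2)$) forces $S(A)$ to be atypically large, while the unconditioned probability that $S(A)>s$ decays strictly faster in $A$.

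For the second assertion, $\P_{A,2}(S(A)>s\mid X_2(\infty)\ge x_2)\to 0$, I would bound it above by noting that even after conditioning on $\{S(A)>T_2(x_2)\}$, we have $S(A)\le T_3$ (say), and $T_3$ is independent of everything in the conditioning event; more directly,
\[
\P_{A,2}(S(A)>s\mid X_2(\infty)\ge x_2)
=\frac{\P(S(A)>\max\{s,T_2(x_2)\})}{\P(S(A)>T_2(x_2))}
\le \frac{\P(S(A)>s)\,\vee\,\P(S(A)>T_2(x_2))}{\P(S(A)>T_2(x_2))},
\]
which is not yet obviously $o(1)$; instead I would split $\{T_2(x_2)\le s\}$ and $\{T_2(x_2)>s\}$: on the first event the numerator integrand is $\P(S(A)>s)$, so that part of the numerator is at most $\P(S(A)>s)\,\P(T_2(x_2)\le s)\le \P(S(A)>s)$, while the denominator is at least (const)$\,\P(S(A)>s')$ for some $s'<s$ with $\P(T_2(x_2)\in(X_2(0),s'))>0$; since $\P(S(A)>s)/\P(S(A)>s')\le (\text{const})^{A-2}e^{-(A-2)\lambda(s-s')}\to 0$, this contribution vanishes. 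On $\{T_2(x_2)>s\}$ numerator and denominator integrands coincide, contributing a ratio bounded by $\P(T_2(x_2)>s)/\P(S(A)>T_2(x_2))$... which does \emph{not} vanish this way, so here I would instead directly dominate: $\P_{A,2}(S(A)>s\mid X_2(\infty)\ge x_2)\le \P(S(A)>s\mid S(A)>\text{essinf}\,T_2(x_2)) + o(1)$, and since $\text{essinf}\,T_2(x_2)=$ (the essential infimum, which is $0$ as $\tau_2(k)$ has no positive lower bound) — indeed $T_2(x_2)$ is a finite sum of exponentials so its density is positive on all of $(0,\infty)$ and its essential infimum is $0$ — we get $\P(S(A)>s\mid S(A)>\epsilon)=\P(S(A)>s)/\P(S(A)>\epsilon)\le(\text{const})^{A-2}e^{-(A-2)\lambda(s-\epsilon)}\to 0$ for any $\epsilon<s$. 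The main obstacle, and the step deserving the most care, is making this last domination rigorous: controlling the conditional law $\P(S(A)\in\cdot\mid S(A)>T_2(x_2))$ uniformly near the left endpoint of $T_2(x_2)$'s support, i.e.\ showing that conditioning pulls $S(A)$ down to scale $O(1/A)$ but not below, so that $\{S(A)>s\}$ for fixed $s>0$ remains a rare event under the conditional law while $\{S(A)>0\}$ is certain — this is where one must be honest about how \eqref{eq: Ttail} transfers to a uniform-in-$u$ statement and combine it with the explicit small-$u$ behaviour of the density of $T_2(x_2)$.
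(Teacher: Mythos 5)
Your proposal follows essentially the same route as the paper: reduce everything to the independence of $S(A)$ and the first-passage time $T_2(x_2)$, note that $S(A)\to0$ so that $\P(S(A)>T_2(x_2))\to0$ while $\P(T_2(x_2)<s)>0$ stays fixed (driving the first claim), and compare $\P(S(A)>s)$ with $\P(S(A)>\epsilon)$ for $\epsilon<s$ via the product structure $\P(S(A)>u)=\P(T_3>u)^{A-2}$ (driving the second claim). That is exactly the mechanism in the paper's proof, and your outline of both directions is correct.

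Two loose ends are worth tightening. First, you discard the conditioning on $sMon_1^c\cap sMon_2^c$ as a "routine bounded reweighting", justified by $\P(sMon_1^c\cap sMon_2^c)$ being bounded away from $0$ and $1$. That is not the right justification (in the symmetric case this probability equals $(A-2)/A\to1$, and in any case the normalization is not the issue): the events $\{S<T_1\}$ and $\{S<T_2\}$ are \emph{correlated} with $\{S>s\}$ and with $\{S>T_2(x_2)\}$, so they cannot simply be divided out. The paper handles this by proving $\P(s<S<T)\sim\P(S>s)\,\P(T>s)$ for any independent $T$ as $A\to\infty$, and by integrating against the \emph{joint} law of $(T_2,T_2(x_2))$ --- these two variables are dependent ($T_2(x_2)<T_2$ always), which your write-up ignores. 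The needed bounds are elementary (e.g.\ $\P(S>T_2(x_2),S<T_1,S<T_2)\ge\P(\epsilon<S<s')\,\P(T_1>s')\,\P(T_2(x_2)<\epsilon,\,T_2>s')$, using independence of the increments of $\Xi_2$), but they must be written down. Second, the "main obstacle" you identify at the end --- controlling the conditional law of $S(A)$ uniformly near the left endpoint of the support of $T_2(x_2)$ --- is illusory. No uniform-in-$u$ transfer of \eqref{eq: Ttail} and no Laplace-type analysis are needed: the second claim closes with the two one-line estimates $\P(S(A)>\max\{s,T_2(x_2)\})\le\P(S(A)>s)$ and $\P(S(A)>T_2(x_2))\ge\P(S(A)>\epsilon)\,\P(T_2(x_2)<\epsilon)$, combined with $\P(S(A)>s)/\P(S(A)>\epsilon)=\big(\P(T_3>s)/\P(T_3>\epsilon)\big)^{A-2}\to0$, which requires only strict monotonicity of the tail of $T_3$, not its asymptotics.
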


\begin{proof}
First, we observe for any $0<s<t$ that 
$$\frac{\P(S>s)}{\P(S>t)}=\left(\frac{\P(T_1>s)}{\P(T_1>t)}\right)^{A-2}\to\infty\quad\text{and}\quad \P(s<S<t)\sim \P(S>s)\quad\text{for }A\to\infty\,. $$
Hence, we get for any positive random variable $T$, which is independent of $S$, that
\begin{equation}\label{eq a1}
    \P(s<S<T)=\int_s^\infty  \P(s<S<t)\,dP_{T}(t)\sim \P(S>s)\,\P(T>s)\quad\text{for }A\to\infty\,,
\end{equation}
where $P_T$ denotes the law of $T$. As a consequence, we get for all $s\geq 0$
\begin{equation}\label{eq a2}
P_{A, 2}(S>s)=\frac{\P(S>s,\,S<T_1,\,S<T_2)}{\P(S<T_1,\,S<T_2)}\sim\frac{\P(s<T_1,\,s<T_2)}{\P(S<T_1,\,S<T_2)}\P(S>s)\quad\text{for }A\to\infty\,.
\end{equation}
Let $P_{(T_2, T_2(x_2))}$ be the joint law of  $(T_2,\, T_2(x_2))$ with $ T_2(x_2)$ as in (\ref{eq: T2(x)}). Now, we are prepared for the final calculation, using the dominated convergence theorem we get for all $s\geq 0$ and $x_2 >X_2 (0)$
    \begin{align*}
        &\frac{ \P_{A, 2}(S>s\,|\, X_2(\infty)\ge x_2)}{ \P_{A, 2}(S>s)}=\frac{\P(S>s,\,S>T_2(x_2),\, S<T_1,\,S<T_2)}{ \P_{A, 2}(S>s)\,\P(S>T_2(x_2),\, S<T_1,\,S<T_2)}\\
        &=\int_0^\infty \int_0^t\frac{\P(S>s,\,S>t(x_2),\, S<T_1,\,S<t) dP_{(T_2, T_2(x_2))}(t, t(x_2))}{ \P_{A, 2}(S>s)\,\P(S>T_2(x_2),\, S<T_1,\,S<T_2)}\\
        &\ge \int_0^\infty \int_0^t \frac{\P(S>s,\, S<T_1,\,S<t)}{ \P_{A, 2}(S>s)\,\P(S>T_2(x_2),\, S<T_1,\,S<T_2)}\cdot\mathds{1}_{\{t(x_2)<s\}}dP_{(T_2, T_2(x_2))}(t, t(x_2))\\
        &\sim \int_0^\infty \int_0^s \frac{\P(S<T_1,\,S<T_2)\,\P(T_1 \wedge t>s)}{\P(s<T_1, s<T_2)\,\P(S>T_2(x_2),\, S<T_1,\,S<T_2)}dP_{(T_2, T_2(x_2))}(t, t(x_2))\\
        &\asymp \frac{\P(S<T_1,\,S<T_2)}{\P(S>T_2(x_2),\, S<T_1,\,S<T_2)}\to\infty\quad\text{for }A\to\infty\ ,
    \end{align*}
    using the aymptotics \eqref{eq a1}, \eqref{eq a2} and $\displaystyle\int_0^\infty \int_0^s \P(T_1 \wedge t>s)dP_{(T_2, T_2(x_2))}(t, t(x_2)) \asymp\P(s<T_1, s<T_2)$, with $\P \big( T_2 (x_2)<s\big) \in (0,1)$. 
The second part of the claim follows as follows:
\begin{align*}
    &P_{A, 2}(S>s\,|\, X_2(\infty)\ge x_2)^{-1}=\frac{\P(S>T_2(x_2),\, S<T_1,\,S<T_2)}{\P(S>T_2(x_2),\, S<T_1,\,S<T_2, S>s)}\\
    &\ge \int_0^\infty \int_0^t \frac{\P(S>t(x_2),\, S<T_1,\,S<t)\mathds{1}_{\{t(x_s)<s\}}\, dP_{(T_2, T_2(x_2))}(t, t(x_2))}{\P(S>s)}\\
    &\sim\int_0^\infty \int_0^s \frac{ \P(S>t(x_2))}{\P(S>s)}\,\P(t(x_2)<T_1 \wedge t)\, dP_{(T_2, T_2(x_2))}(t, t(x_2))\xrightarrow{A\to\infty}\infty\ .
\end{align*}
\end{proof}

Hence, $S$ becomes stochastically larger by conditioning on $X_2(\infty)\ge x_2$ and this change is stronger for larger $A$. Since $S$ covers the dependence between both agents,  $X_1(\infty)=\Xi_1(S)$ also becomes stochastically larger by conditioning on $X_2(\infty)\ge x_2$ and the effect is increasing with $A$, causing an increasing dependence in the sense measured by $c(A, a)$. Be aware that Proposition \ref{prop: increasingDependence} holds for fixed (i.e. non-diverging) $x_2$. Nevertheless, the dependence of losers is rather weak for all $A$ as the tail decay of $X_i(\infty)$ is only affected by other agents up to a constant prefactor, which is however not negligible even in the limit $A\to\infty$. In general, the $X_i(\infty)$ are asymptotically independent for $A\to\infty$ in a trivial way (see Appendix \ref{sec: manyAgents}). This is consistent with the second part of Proposition \ref{prop: increasingDependence}, which implies for large $A$ that $X_1(\infty)=X_1(0)$ is likely even conditioned on $X_2(\infty)\ge x_2$. This is not a contradiction as the constant $c(A, a)$ only measures the tail dependence, where we basically already picked two agents who won many steps.\\

The slightly positive correlation of losers can also be numerically underlined as follows. For $F(k)=k^2$, we executed 100,000 simulations each with $A=3$ and $A=30$. The simulations were stopped when one agent exceeded a share of $0.99$. The Pearson correlation coefficient of $\log(X_1(\infty))$ and $\log(X_2(\infty))$ conditioned on $sMon_1^c\cap sMon_2^c$ amounts to $+0.020$ for $A=3$ and $+0.025$ for $A=30$.\\

Next, we analyse the dependence of $c(A, a)$ on $a$. For that, let us rephrase Theorem \ref{thm: LoserCorrelation} in the fully symmetric case as
$$ \P_{A, a}(X_1(\infty)>x_1\,|\,X_2(\infty)>x_2,\ldots, X_a(\infty)>x_a)\sim\frac{c(A, a)}{c(A, a-1)} \P_{A, a}(X_1(\infty)>x_1)$$
for $x_1,\ldots, x_a\to\infty$. Note that $c(A, 1)=1$ by definition. Obviously, the information that several losers won in many steps is a stronger hint for a late explosion of the winner than having this information for only a few agents. Hence, we conjure that $c(A, a)/(A, a-1)$ is increasing in $a$ and, equivalently, $c(A, a)$ increases super-exponentially in $a$. The following table shows the ratio $c(A, a)/(A, a-1)$ for $F(k)=k^2$ and different $A, a$.

\begin{center}
\begin{tabular}{ c | c c c}
$\frac{c(A, a)}{c(A, a-1)}$ & $A=10^3$ & $A=10^6$ & $A=10^9$ \\\hline
 $a=3$  & 2.04 & 2.45 & 2.61 \\
 $a=10$ & 4.37 & 6.65 & 7.63 \\
 $a=20$ & 6.45 & 11.78 & 14.14 \\
 $a=30$ & 8.25 & 16.42 & 20.27 
\end{tabular}
\end{center}

These computations combined with the heuristic below motivate the following conjecture for large systems.

\begin{conjecture}\label{conj: c}
    For $F_1=\ldots=F_A$, $X(0)=(1,\ldots, 1)$ and any $a\geq 2$, we have 
    $$\lim_{A\to\infty}c(A, a)=a!\quad\mbox{so that}\quad \lim_{A\to\infty}\frac{c(A, a)}{c(A, a-1)} =a\ .$$
\end{conjecture}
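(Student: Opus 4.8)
The plan is to compute the limit of $c(A,a)$ via formula (\ref{eq: cFormula}) in the fully symmetric case, where it simplifies to
$$c(A, a)=\frac{\E\left[ g(S)^a\right]}{\left(\E g(S)\right)^a}\cdot\frac{(A-1)^a}{A^{a-1}(A-a)}\,,$$
with $S=S(A)=\min_{i=a+1,\ldots,A}T_i$. The second factor clearly tends to $1$ as $A\to\infty$, so the whole task reduces to showing that the ratio of moments $\E[g(S)^a]/(\E g(S))^a$ converges to $a!$. First I would recall from (\ref{eq: prelim}) and Lemma \ref{lemma: zhu} that $g(t)\sim F(1)\,C\, e^{-F(1)t}$ and $\P(T_1>t)\sim C\,e^{-F(1)t}$ as $t\to\infty$, with $C\coloneqq\prod_{l=2}^\infty F(l)/(F(l)-F(1))$, where we use $X_1(0)=1$.

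The heart of the matter is that $S(A)=\min$ of $A-a$ iid copies of $T_1$, so $S(A)\to 0$ in distribution (Lemma \ref{lem: limA}), but after the natural rescaling $U_A\coloneqq (A-a)\,\P(T_1>S(A))$ one has $U_A\to \mathrm{Exp}(1)$: indeed $\P(\P(T_1>S(A))>u/(A-a)) = \P(T_1>F_{T_1}^{-1}\text{-level})^{\ldots}=(1-u/(A-a))^{A-a}\to e^{-u}$, the standard fact that the minimal order statistic, transformed by its own tail function and scaled by sample size, is asymptotically standard exponential. Writing $S(A) = G^{-1}(U_A/(A-a))$ with $G(t)=\P(T_1>t)$ and using $g(t)/G(t)\to F(1)$ as $t\to\infty$ (equivalently as the argument $\to 0^+$, i.e. as $A\to\infty$), one gets $g(S(A))\sim F(1)\,G(S(A)) = F(1)\,U_A/(A-a)$. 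Hence $(A-a)\,g(S(A)) \to F(1)\cdot \mathrm{Exp}(1)$ in distribution, and more importantly $(A-a)^a\,\E[g(S(A))^a] \to F(1)^a\,\E[E^a] = F(1)^a\,a!$ where $E\sim\mathrm{Exp}(1)$, while $\big((A-a)\,\E g(S(A))\big)^a \to F(1)^a \cdot 1 = F(1)^a$. Dividing, $\E[g(S)^a]/(\E g(S))^a \to a!$, which combined with the elementary prefactor gives $c(A,a)\to a!$, and the ratio statement $c(A,a)/c(A,a-1)\to a!/(a-1)!=a$ follows immediately.

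The main obstacle is justifying the passage from convergence in distribution of $(A-a)\,g(S(A))$ to convergence of its $a$-th moments, i.e. uniform integrability of $\big((A-a)\,g(S(A))\big)^a$ along $A$. For this I would exploit that $g$ is \emph{bounded} (Lemma \ref{lemma: gProperties}, part 1), say $g\le F(1)$, together with sharp two-sided exponential control $c_1 e^{-F(1)t}\le G(t)\le c_2 e^{-F(1)t}$ and $c_1' e^{-F(1)t}\le g(t)\le c_2' e^{-F(1)t}$ valid for $t\ge t_0$ (from Lemma \ref{lemma: zhu}, as in (\ref{eq: Ttail})); on the region $S(A)<t_0$ one bounds $g(S(A))\le F(1)$ and uses $\P(S(A)<t_0)\to 1$ while on $S(A)\ge t_0$ one has $(A-a)g(S(A))\le c_2'\, (A-a)\,G(S(A))/c_1 = (c_2'/c_1)\,U_A$ which is uniformly integrable to any power since $U_A\to E$ with $(A-a)^k$-th moments of $(1-u/(A-a))^{A-a}$ bounded uniformly in $A$. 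A clean way to package all of this is to compute $\E[g(S(A))^a]$ directly as an integral $(A-a)\int_0^\infty g(t)^a\,G(t)^{A-a-1}\,g_{T_1}(t)\,dt$ — wait, more simply $\E[g(S(A))^a] = \int_0^\infty g(t)^a \,d\big(1-G(t)^{A-a}\big) = (A-a)\int_0^\infty g(t)^a\,G(t)^{A-a-1}\,(-G'(t))\,dt$, substitute $v=G(t)^{A-a}$ and then $w=-(A-a)\log v \in[0,\infty)$, and apply dominated convergence using $g(t)/G(t)\to F(1)$ and the exponential tail bounds to control the integrand; the same substitution handles $\E g(S(A))$ with $a=1$. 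I expect this moment-integral computation, rather than any conceptual difficulty, to be where the real work lies, but all the needed ingredients (boundedness of $g$, strict monotonicity of $F$, and the precise tail asymptotics) are already available in Section \ref{sec: birth}.
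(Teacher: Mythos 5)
First, note that the statement you are proving is labelled a \emph{conjecture} in the paper: the authors offer only numerical evidence and a heuristic (approximating $\P(T_i\le s)\approx cs^k$ for small $s$ with $k$ large and invoking a Weibull limit for the minimum), and they explicitly write ``if Conjecture \ref{conj: c} holds''. So there is no proof in the paper to match; the question is whether your argument closes the gap. It does not, because it rests on applying the wrong asymptotic regime of $g$.

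The key point is that $S(A)=\min_{i=a+1,\dots,A}T_i\to 0$ as $A\to\infty$ (Lemma \ref{lem: limA}), so everything is governed by the behaviour of $g$ and $G(t)=\P(T_1>t)$ as $t\to 0^+$, not as $t\to\infty$. The relations $g(t)\sim F(1)Ce^{-F(1)t}$, $G(t)\sim Ce^{-F(1)t}$ and $g(t)/G(t)\to F(1)$ from \eqref{eq: Ttail} and \eqref{eq: prelim} are large-$t$ statements; your parenthetical ``equivalently as the argument $\to 0^+$'' is a non sequitur. Near $t=0$ one has $G(t)\to 1$ and $g(t)\to 0$ with \emph{all} derivatives vanishing (Lemma \ref{lemma: gProperties}, part 4), so $g(t)/G(t)\to 0$ and $g$ is $C^\infty$-flat at the origin. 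Relatedly, your extreme-value normalization is inverted: $U_A=(A-a)\,\P(T_1>S(A))\to\infty$ because $\P(T_1>S(A))\to 1$; the correct classical statement is $(A-a)\,\P(T_1\le S(A))\to\mathrm{Exp}(1)$, i.e.\ the minimum transformed by the \emph{distribution function}, and your displayed computation $\P\big(\P(T_1>S(A))>u/(A-a)\big)=(1-u/(A-a))^{A-a}$ is false (the left-hand side equals $1-(u/(A-a))^{A-a}\to 1$). Consequently the pivotal claim $g(S(A))\sim F(1)\,U_A/(A-a)$ has no basis, and the moment computation built on it, including the uniform-integrability argument (whose two-sided exponential bounds are again only valid for $t\ge t_0$, a region that carries vanishing mass for $S(A)$), collapses.

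What a correct proof would actually require is a statement about $g$ near $0$: writing $H=1-G$ for the CDF of $T_1$ and $V_A=H(S(A))$, so that $(A-a)V_A\to\mathrm{Exp}(1)$, one needs $v\mapsto g(H^{-1}(v))$ to be regularly varying of index exactly $1$ at $v=0^+$ (together with uniform integrability of the rescaled $a$-th powers); this would give $\E[g(S)^a]/(\E g(S))^a\to\Gamma(1+a)=a!$. That regular-variation property is plausible (it is what the paper's Weibull heuristic with $k\to\infty$ encodes) but is exactly the nontrivial content of the conjecture, and it is delicate precisely because $g$ is not analytic at $0$. Your reduction of $c(A,a)$ via the symmetric formula and the observation that the combinatorial prefactor tends to $1$ are fine, but the analytic core of the argument is missing.
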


The precise value $a!$ is justified by the following heuristic argument. First, we observe that $c(A, a)\sim\frac{\E [g(S)^a]}{\E[g(S)]^a}$ for $A\to\infty$ since $S\xrightarrow{A\to\infty}0$ almost surely. Now, we approximate for simplicity
\begin{equation}\label{eq: assumptionT}
    \P(T_i\le s)\approx cs^k\quad\text{for }s\to0
\end{equation}
for some $c>0$ and $k$ large (see Lemma \ref{lemma: gProperties}). By standard extreme value theory, we know that $\sqrt[k]{A}S$ converges for $A\to\infty$ towards a Weibull distributed random variable $W$ with density $w(s)=cks^{k-1}e^{-cs^k}$. Then we get for $A\to\infty$:
\begin{align*}
    \frac{\E [g(S)^a]}{\E[g(S)]^a}\sim \frac{\E [g(W/\sqrt[k]{A})^a]}{\E[g(W/\sqrt[k]{A})]^a}\sim\frac{\E\left[(cW^k/A)^a\right]}{\E[cW^k/A]^a}=\frac{\E[W^{ak}]}{\E[W^k]^a}=\frac{c^{-a}\Gamma(1+a)}{c^{-a}\Gamma(2)^a}=a!
\end{align*}
Since the limit does neither depend on $k$ nor on $c$, it stands to reason that the approximation (\ref{eq: assumptionT}) does not change the result. If conjecture \ref{conj: c} holds, then $c(A, a)/c(A, a-1)\to a>1$, underlining that the positive tail dependence of losers does not vanish in the limit $A\to\infty$.

\section{The time of monopoly}

Another interesting consequence of Theorem \ref{thm: loserSuperlin} and Theorem \ref{thm: loserSublin} concerns the time of monopoly as defined in (\ref{eq: MonTimeDef}). In the symmetric, super-linear two-agent case, this quantity has already been studied in \cite{Zhu, Cotar}, but with a slightly different definition (which differs from ours at most up to a multiplicative constant).  They found that the tail of $N_{mon}$ is heavier than the tail of $\min(X_1(\infty), X_2(\infty))$, which implies that it is not untypical for losers to win a few late steps, when the advantage of the winner is already big. In this chapter, we extend their results to the (mostly homogeneous) asymmetric case, where we also allow for sub-linear feedback.

\subsection{Time of monopoly for super-linear agents}\label{sec: time}

We directly formulate our main result for a system with two super-linear agents, who can both be the monopolist with positive probability. We will return to $A>2$ below.

\begin{corollary}\label{cor: MonTimeSuperlin}
    Let $A=2$ and $F_i(k)=\alpha_ik^{\beta_i}$ with $\beta_i>1$ and $\alpha_i>0$. Define $\beta\coloneqq {\frac{\beta_1-1}{\beta_2}}$ and assume that (\ref{eq: condMonTime}) holds.

\begin{enumerate}
    \item If $\beta_1\le\beta_2+1$, then we have
    $$\P(N_{mon}=n\,|\,sMon_1)\asymp n^{\beta-\beta_1}\quad\text{for }n\to\infty\,.$$
     \item If $\beta_1\ge\beta_2+1$, then we have
    $$\P(N_{mon}=n\,|\,sMon_1)\asymp n^{-\beta_2}\quad\text{for }n\to\infty\,.$$
\end{enumerate}
\end{corollary}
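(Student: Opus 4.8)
The plan is to work throughout in the exponential embedding. On $sMon_1=\{T_1<T_2\}$ the loser's final size is $X_2(\infty)=\Xi_2(T_1)$, and on $\{X_2(\infty)=j\}$ the last ball ever won by agent~$2$ is placed at the continuous time $T_2(j-1)=\sum_{k=X_2(0)}^{j-1}\tau_2(k)<T_1$, at which instant the urn holds $\Xi_1(T_2(j-1))+j$ balls; hence, up to an additive constant,
\[
N_{mon}=\Xi_1\big(T_2(X_2(\infty)-1)\big)+X_2(\infty)\qquad\text{on }sMon_1 .
\]
Writing $m=n-j$, this gives $\P(N_{mon}=n\mid sMon_1)\asymp\sum_{j+m=n}\P\big(\Xi_1(T_2(j-1))=m,\ X_2(\infty)=j,\ sMon_1\big)$. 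The first step is the exact factorisation
\[
\P\big(\Xi_1(T_2(j-1))=m,\,X_2(\infty)=j,\,sMon_1\big)=\P\big(\Xi_1(T_2(j-1))=m\big)\cdot p(m,j),\qquad p(m,j):=\prod_{k\ge m}\frac{F_1(k)}{F_1(k)+F_2(j)} ,
\]
which holds because $\{X_2(\infty)=j\}\cap sMon_1=\{T_2(j-1)\le T_1<T_2(j)\}$ (using $T_1,T_2<\infty$ a.s.), because conditionally on $\{\Xi_1(T_2(j-1))=m\}$ the residual explosion time $T_1-T_2(j-1)$ has the law of $\bar T_1(m-1)$ by memorylessness of $\tau_1(m)$ and is independent of $\tau_2(j)=T_2(j)-T_2(j-1)$, and because $T_2(j-1)$ depends only on agent~$2$ and is thus independent of agent~$1$; the last factor then equals $\P(\bar T_1(m-1)<\tau_2(j))$.

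Two estimates feed the summation. For the birth-process factor, Corollary~\ref{cor: birthMain} with the random observation time $S=T_2(j-1)$ gives $\P(\Xi_1(T_2(j-1))=m)\sim\E g_1(T_2(j-1))/F_1(m)$ for each fixed $j$; what is actually needed is the uniform-in-$j$ statement $\P(\Xi_1(T_2(j-1))=m)\asymp 1/F_1(m)$. The upper bound $\prec 1/F_1(m)$ is immediate from Corollary~\ref{cor: uniformBound} and the bound $\sup g_{1,m}\le F_1(X_1(0))$ for the hypoexponential density $g_{1,m}$ of $T_1(m)$. For the lower bound I would restrict to $\{T_2(j-1)\in[\varepsilon,K]\}$ for fixed $0<\varepsilon<K$: the uniform-in-$s$ part of Theorem~\ref{thm: birthMain} gives $\P(\Xi_1(s)=m)\gtrsim 1/F_1(m)$ for $s\in[\varepsilon,K]$, and $\inf_j\P(T_2(j-1)\in[\varepsilon,K])>0$ since $T_2(j-1)\to T_2$ a.s. For the competition factor, comparing $-\log p(m,j)=\sum_{k\ge m}\log(1+F_2(j)/F_1(k))$ with the corresponding integral and using $F_i(k)=\alpha_ik^{\beta_i}$ yields, with $\beta:=(\beta_1-1)/\beta_2$,
\[
p(m,j)\asymp 1\ \text{ if }j\le Cm^{\beta},\qquad -\log p(m,j)\gtrsim\min\big\{(j/m^{\beta})^{\beta_2},\,m\big\}\ \text{ if }j\ge m^{\beta},
\]
so $p(m,j)$ decays super-polynomially once $j/m^{\beta}\to\infty$, and $p$ is monotone decreasing in $j$.

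The final step is the asymptotic evaluation of $\sum_{m=1}^{n-1}m^{-\beta_1}p(m,n-m)$, split by the size of $(n-m)/m^{\beta}$. If $\beta\le1$ (i.e.\ $\beta_1\le\beta_2+1$), the indices with $n-m\le Cm^{\beta}\asymp n^{\beta}$ give $\asymp n^{\beta}\cdot n^{-\beta_1}=n^{\beta-\beta_1}$; the range $Cn^{\beta}\lesssim n-m\lesssim n/2$ gives the same order after the substitution $u=(n-m)/m^{\beta}$ (using $\beta\beta_2=\beta_1-1$, so $((n-m)/m^{\beta})^{\beta_2}\asymp u^{\beta_2}$); and the range $m\lesssim n/2$ is super-polynomially small; hence $\P(N_{mon}=n\mid sMon_1)\asymp n^{\beta-\beta_1}$. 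If $\beta\ge1$ (i.e.\ $\beta_1\ge\beta_2+1$), then $p(m,n-m)\asymp1$ for all $m\gtrsim n^{1/\beta}$, whence the dominant contribution is $\sum_{m\gtrsim n^{1/\beta}}m^{-\beta_1}\asymp n^{(1-\beta_1)/\beta}=n^{-\beta_2}$, while $m\lesssim n^{1/\beta}$ contributes at most $\asymp n^{-\beta_1/\beta}$, negligible since $\beta_1/\beta>\beta_2$; hence $\P(N_{mon}=n\mid sMon_1)\asymp n^{-\beta_2}$. The two regimes agree at $\beta_1=\beta_2+1$, where $n^{\beta-\beta_1}=n^{-\beta_2}$. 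I expect the main obstacle to be precisely the uniform-in-$j$ control of $\P(\Xi_1(T_2(j-1))=m)$ in the first step, since Theorem~\ref{thm: birthMain} and Corollary~\ref{cor: birthMain} are formulated for a fixed observation-time law whereas here that law drifts with $j$ towards $T_2$; the standing assumption~\eqref{eq: condMonTime} is presumably what makes this truncation argument (or a uniform refinement of Theorem~\ref{thm: birthMain}) go through.
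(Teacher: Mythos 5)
Your decomposition is, after a change of bookkeeping, identical to the paper's: the paper sums over $k$ (the loser's extra wins before its last one) the product $\P(X_2(n-1)=k+X_2(0))\cdot\frac{F_2(X_2(0)+k)}{F_1(X_1(0)+n-1-k)+F_2(X_2(0)+k)}\cdot R(k,n)$, and with $j=X_2(0)+k+1$, $m=X_1(0)+n-1-k$ the first two factors equal your $\P(\Xi_1(T_2(j-1))=m)$ while $R(k,n)$ equals your $p(m,j)$. The genuinely different (and in my view better) element is your step 2. The paper controls the first factor by comparing $\P(X_2(n-1)=\cdot)$ with $\P(X_2(\infty)=\cdot)$ and then invoking Theorem \ref{thm: loserSuperlin}; the upper bound in the intermediate range forces them to \emph{assume} the uniform comparison \eqref{eq: condMonTime}, which they explicitly cannot prove. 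Your direct two-sided bound $\P(\Xi_1(T_2(j-1))=m)\asymp 1/F_1(m)$ uniformly in $j$ — upper bound from Corollary \ref{cor: uniformBound} together with $g_{1,m}\le F_1(X_1(0))$, lower bound by restricting $T_2(j-1)$ to a fixed compact interval $[\varepsilon,K]$, using the $t\ge t_0$-uniformity of Theorem \ref{thm: birthMain} and $\inf_j\P(T_2(j-1)\in[\varepsilon,K])\ge\P(\tau_2(X_2(0))>\varepsilon,\,T_2<K)>0$ — is sound and does not use \eqref{eq: condMonTime} at all. So your route, if completed, would actually remove a standing assumption of the corollary; contrary to your closing guess, \eqref{eq: condMonTime} plays no role in your truncation argument.

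The genuine gap is in your control of $p(m,j)$ for small and intermediate $m$, which is exactly the regime that matters in case 2 ($\beta\ge1$). Your stated bound $-\log p(m,j)\gtrsim\min\{(j/m^{\beta})^{\beta_2},m\}$ gives nothing better than $p(m,j)\lesssim e^{-cm}$ when $m$ is bounded or of order $\log n$; summing $m^{-\beta_1}e^{-cm}$ over $m\le C\log n$ then yields a \emph{constant}, which would swamp the claimed $n^{-\beta_2}$. So "decays super-polynomially once $j/m^{\beta}\to\infty$" is false as stated for bounded $m$, and the case-2 summation does not close with the bound you wrote. The fix is to use the much stronger decay actually available there: for $F_2(j)\gg F_1(m)$ one has $p(m,j)\le\prod_{k=m}^{K}F_1(k)/F_2(j)$ for all $K$ with $F_1(K)\le F_2(j)$ (equivalently, $\E[e^{-F_2(j)\bar T_1(m-1)}]$ is super-polynomially small in $F_2(j)$ because $\P(\bar T_1(m-1)<\delta)\le\prod_{k=m}^{c\delta^{-1/\beta_1}}F_1(k)\delta$). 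Relatedly, your bound "$m\lesssim n^{1/\beta}$ contributes at most $\asymp n^{-\beta_1/\beta}$" drops the number of summands: the correct evaluation of $\sum_{M_0\le m\le n^{1/\beta}}m^{-\beta_1}e^{-c(n/m^{\beta})^{\beta_2}}$ (substitute $m=n^{1/\beta}v$) gives $\asymp n^{(1-\beta_1)/\beta}=n^{-\beta_2}$, i.e.\ the \emph{same} order as the main term, exactly as in the paper's middle-sum computation \eqref{eq: monTimeSum2}; this is harmless for $\asymp$ but your "negligible" claim is wrong. Finally, your case-1 assertion that the range $m\lesssim n/2$ is super-polynomially small fails at the boundary $\beta_1=\beta_2+1$ (there $j^{\beta_2}m^{1-\beta_1}=O(1)$ for $m\asymp n$), though that boundary is covered by case 2.
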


In the proof, $k$ will denote the number of steps won by agent $2$, when the monopoly of agent 1 set in at time $n$. We will have to distinguish small, moderate and large $k$ for given $n$. For that, we define
$$k_1(n)\coloneqq\begin{cases}
    n^{\beta} &\text{for }\beta_1<\beta_2+1\\
    \frac n2  &\text{for }\beta_1=\beta_2+1\\
    n-n^{\beta_2/(\beta_1-1)} &\text{for }\beta_1>\beta_2+1
\end{cases}$$
and
$$ k_2(n)\coloneqq \begin{cases}
    const. n^{(\beta_1-1)/(\beta_2-1)} &\text{for }\beta_1<\beta_2\\
    const. \frac n2 &\text{for }\beta_2\le\beta_1<\beta_2+1\\
    n-const. n^{\beta_2/\beta_1} &\text{for }\beta_1\ge\beta_2+1
\end{cases}$$
with $const.>0$, such that (\ref{eq: MonTimeConcentration}) holds. Note that $k_2 (n)\geq k_1 (n)$ for all $n$. In order to apply the dominated convergence theorem in the proof of Corollary \ref{cor: MonTimeSuperlin}, we need to require the following uniformity of the convergence $X_2(n)\to X_2(\infty)$:
\begin{equation}\label{eq: condMonTime}
    \sup_{n\in\N\atop k_1(n)\le k\le k_2(n) }\frac{\P(X_2(n)=X_2(0)+k,\,sMon_1)}{\P(X_2(\infty)=X_2(0)+k)}<\infty
\end{equation}
This assumptions seems plausible since the convergence $X_2(n)\nearrow X_2(\infty)$ is monotone. Nevertheless, a formal proof is still pending. This specific choice of $k_1(n)$ and $k_2(n)$ ensures that
$$\liminf_{n\to\infty}\frac{(n-k_2(n))^{\beta_1}}{k_2(n)^{\beta_2}}>0,\quad \limsup_{n\to\infty}\frac{k_1(n)^{\beta_2}}{(n-k_1(n))^{\beta_1-1}}<\infty,\quad\liminf_{n\to\infty}\frac{k_2(n)^{\beta_2}}{(n-k_2(n))^{\beta_1-1}}=\infty$$
holds, which we are going to use in the proof of Corollary \ref{cor: MonTime} extensively. We start the proof with a short lemma.

\begin{lemma}\label{lemma: monTime}
    Let $A=2$ and $F_i(k)=\alpha_ik^{\beta_i}$ with $\beta_i>1$ and $\alpha_i>0$. Then:
    $$\liminf_{n\to\infty}\inf_{ k_1(n)\le k\le k_2(n)}\frac{\P(X_2(n)=X_2(0)+k,\,sMon_1)}{\P(X_2(n)=X_2(0)+k)}>0$$
\end{lemma}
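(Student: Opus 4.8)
The plan is to recognize the left-hand ratio as a strong-monopoly probability of a Pólya urn restarted from the appropriate state, reduce the uniform infimum over $k$ to a single endpoint by a monotonicity argument, and then control that endpoint by a concentration estimate in the exponential embedding. On the event $\{X_2(n)=X_2(0)+k\}$ one automatically has $X_1(n)=X_1(0)+n-k$, so by the Markov property of the urn (equivalently, the strong Markov / restart property of the exponential embedding of Section \ref{sec: applicatonPolya}) the ratio equals
\[
p_n(k)\coloneqq\P\big(sMon_1\mid X(n)=(X_1(0)+n-k,\,X_2(0)+k)\big)=\P\big(\bar T_1<\bar T_2\big),
\]
where $\bar T_i\coloneqq\sum_{l= x_i}^{\infty}\tau_i(l)$ is the explosion time of an independent birth process with rate $F_i$ started at $x_1=X_1(0)+n-k$ and $x_2=X_2(0)+k$. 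Note the denominator $\P(X_2(n)=X_2(0)+k)$ is strictly positive for every $0\le k\le n$, and $k_1(n)\le k_2(n)$, so the infimum runs over a non-empty set.

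Next I would establish monotonicity. Coupling a birth process started at $x$ with one started at $x+1$ through the shared sojourn times $\tau_i(x+1),\tau_i(x+2),\dots$ shows $\bar T_i$ is pathwise non-increasing in its starting point, hence $p_n(k)$ is non-decreasing in $x_1$ and non-increasing in $x_2$. Since raising $k$ lowers $x_1$ and raises $x_2$, the map $k\mapsto p_n(k)$ is non-increasing, so $\inf_{k_1(n)\le k\le k_2(n)}p_n(k)=p_n(k_2(n))$ and it remains to prove $\liminf_{n\to\infty}p_n(k_2(n))>0$.

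For this last step, write $x_i^{(n)}$ for the starting points at $k=k_2(n)$; inspecting the three regimes defining $k_2(n)$ gives $n-k_2(n)\to\infty$ and $k_2(n)\to\infty$, so $x_i^{(n)}\to\infty$. For $F_i(k)=\alpha_ik^{\beta_i}$ with $\beta_i>1$ one has $\E\bar T_i\sim\frac{1}{\alpha_i(\beta_i-1)}(x_i^{(n)})^{1-\beta_i}$ and $\operatorname{Var}(\bar T_i)=O\big((x_i^{(n)})^{1-2\beta_i}\big)$, whence $\operatorname{Var}(\bar T_i)/(\E\bar T_i)^2=O\big(1/x_i^{(n)}\big)\to0$ and Chebyshev gives $\bar T_i/\E\bar T_i\to1$ in probability. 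Substituting $k_2(n)$ into the mean asymptotics, one checks $\E\bar T_1/\E\bar T_2\to0$ when $\beta_1>\beta_2$, while for $\beta_1\le\beta_2$ the ratio tends to a positive constant that is strictly less than $1$ provided the constant in the definition of $k_2(n)$ is small enough — precisely the freedom already used to arrange (\ref{eq: MonTimeConcentration}). Fixing $\delta\in(0,1)$ with $\limsup_n\E\bar T_1/\E\bar T_2<\tfrac{1-\delta}{1+\delta}$, the inclusion $\{\bar T_1<(1+\delta)\E\bar T_1\}\cap\{\bar T_2>(1-\delta)\E\bar T_2\}\subseteq\{\bar T_1<\bar T_2\}$ holds for all large $n$, so by independence of $\Xi_1,\Xi_2$ and the concentration bounds
\[
p_n(k_2(n))\ \ge\ \P\big(\bar T_1<(1+\delta)\E\bar T_1\big)\,\P\big(\bar T_2>(1-\delta)\E\bar T_2\big)\ \xrightarrow{n\to\infty}\ 1,
\]
which proves the lemma (with liminf in fact equal to $1$).

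The reduction to the single endpoint $k=k_2(n)$ via monotonicity is what turns the awkward uniform infimum into one limit; the real obstacle is the borderline regime $\beta_1\le\beta_2$ (including $\beta_1=\beta_2$ with $\alpha_1\ne\alpha_2$), where $\E\bar T_1/\E\bar T_2$ converges to a genuine constant rather than vanishing, so one must verify that the constant built into $k_2(n)$ — already pinned down by (\ref{eq: MonTimeConcentration}) — is small enough to keep that limiting constant below $1$. The strong concentration of $\bar T_i$ (which holds because the first sojourn time $\tau_i(x_i^{(n)})$ is of smaller order than the tail sum) is then exactly what keeps $\P(\bar T_1<\bar T_2)$ bounded away from $0$.
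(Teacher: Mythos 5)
Your proposal is correct and follows essentially the same route as the paper's proof: reduce the ratio to the conditional monopoly probability from the restarted state, use monotonicity in $k$ to pass to the endpoint $k=k_2(n)$, and then invoke concentration of the remaining explosion times to reduce the question to the expectation-ratio condition \eqref{eq: MonTimeConcentration}, which the definition of $k_2(n)$ guarantees. The only difference is that you make explicit two steps the paper leaves implicit or cites externally — the coupling argument for monotonicity and the Chebyshev bound replacing the concentration result quoted from the reference — which is a welcome but not substantively different elaboration.
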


\begin{proof}
    We have to show that 
    $$\P(sMon_1\,|\,X_2(n)=X_2(0)+k)\ge\P(sMon_1\,|\,X_2(n)=X_2(0)+k_2(n))$$
    is bounded away from zero. Using the concentration property of explosion times around their expectation from \cite{wir}, it suffices to show that
    \begin{align}\label{eq: MonTimeConcentration}
        \liminf_{n\to\infty}\frac{\sum_{l=X_2(0)+k_2(n)}^{\infty}\frac{1}{F_2(l)}}{\sum_{l=X_1(0)+n-k_2(n)}^{\infty}\frac{1}{F_1(l)}}=\lim_{n\to\infty}\frac{\alpha_1(\beta_2-1)}{\alpha_2(\beta_1-1)}\cdot\frac{k_2(n)^{1-\beta_2}}{(n-k_2(n))^{1-\beta_1}}>1\,,
    \end{align}
    which is ensured by the definition of $k_2(n)$ in each case.
\end{proof}

\begin{proof}[Proof of Corollary \ref{cor: MonTimeSuperlin}]
    First, we express the desired probability as
    \begin{align}\label{eq: timeMonopoly}
        &\P(N_{mon}=n+1, sMon_1)\\
        &\quad=\sum_{k=1}^{n-1}\P(X_2(n-1)=k+X_2(0))\cdot\P(N_{mon}=n+1, sMon_1\,|\,X_2(n-1)=k+X_2(0))\nonumber\\
        &\quad=\sum_{k=1}^{n-1}\P(X_2(n-1)=k+X_2(0))\cdot\frac{F_2(X_2(0)+k)}{F_1(X_1(0)+n-1-k)+F_2(X_2(0)+k)}\cdot R(k, n)\,,\nonumber
    \end{align}
    where 
    \begin{equation}\label{eq: R(k,n)}
        R(k, n)\coloneqq\prod_{l=0}^\infty \frac{F_1(X_1(0)+n-k-1+l)}{F_1(X_1(0)+n-k-1+l)+F_2(X_2(0)+k+1)}<1 
    \end{equation}
    is the probability, that agent 1 wins all steps after step $n$, when agent 2 won $k+1$ of the first $n$ steps. Due to $\frac{1}{1+x}\ge e^{-x}$ for all $x>-1$, we have the following lower bound for $R(k, n)$:
    \begin{align*}
        R(k,n)\ge \exp\left(-F_2(X_2(0)+k+1)\sum_{l=X(0)+n-k-1}^\infty\frac{1}{F_1(l)}\right)\ge \exp\left(-const.k^{\beta_2}(n-k)^{1-\beta_1}\right)
    \end{align*}

    1. Now assume that $\beta_1-\beta_2<1$, such that $\beta<1$ and $R(k, n)$ is uniformly bounded away from zero as long as $k\le k_1(n)=n^\beta$. Using Theorem \ref{thm: loserSuperlin}, we get the following asymptotic for the first $k_1(n)$ summands in (\ref{eq: timeMonopoly}): 
    \begin{align}\label{eq: MonTimeSum1}
        &\sum_{k=1}^{k_1(n)}\P(X_2(n-1)=k+X_2(0))\cdot\frac{F_2(X_2(0)+k)}{F_1(X_1(0)+n-1-k)+F_2(X_2(0)+k)}\cdot R(k, n)\nonumber\\
        &\asymp \sum_{k=1}^{n^\beta}\frac{\P(X_2(n-1)=k+X_2(0))}{\P(X_2(\infty)=k+X_2(0))}\cdot\frac{1}{F_1(n-k)}\asymp n^{-\beta_1} \sum_{k=1}^{n^\beta}\frac{\P(X_2(n-1)=k+X_2(0))}{\P(X_2(\infty)=k+X_2(0))}\nonumber\\
        &=n^{-\beta_1}\int_0^{n^\beta}\frac{\P(X_2(n-1)=\lceil u\rceil+X_2(0))}{\P(X_2(\infty)=\lceil u\rceil+X_2(0))}\,du=n^{\beta-\beta_1}\int_{0}^1\frac{\P(X_2(n-1)=\lceil n^\beta u\rceil+X_2(0))}{\P(X_2(\infty)=\lceil n^\beta u\rceil+X_2(0))}\,du\nonumber\\
        &\sim n^{\beta-\beta_1}\quad\text{for }n\to\infty
    \end{align}
    In the last line, we applied the dominated convergence theorem, which holds since
    \begin{align}\label{eq: MonTimeMajorante}
        &\frac{\P(X_2(n-1)=\lceil n^\beta u\rceil+X_2(0))}{\P(X_2(\infty)=\lceil n^\beta u\rceil+X_2(0))}\le \frac{\P(X_2(n-1)=\lceil n^\beta u\rceil+X_2(0))}{\P(X_2(\infty)=X_2(n-1)=\lceil n^\beta u\rceil+X_2(0))}\nonumber\\
        &=\frac{1}{\P(X_2(\infty)=X_2(n-1)\,|\,X_2(n-1)=\lceil n^\beta u\rceil+X_2(0))}=\frac{1}{R(\lceil n^\beta u\rceil-1, n-1)}\nonumber\\
        &\le \frac{1}{R(\lceil n^\beta \rceil-1, n-1)}
    \end{align}
    
    To complete the proof of 1., it remains to show that the remaining summands in (\ref{eq: timeMonopoly}) are at most of the same order. For that,  we first observe that for $k\le k_2(n)$
   \begin{align}\label{eq: upperBoundR}
        R(k,n)\le \exp\left(-const.F_2(X_2(0)+k+1)\sum_{l=X(0)+n-k-1}^\infty\frac{1}{F_1(l)}\right)\le \exp\left(-const.k^{\beta_2}(n-k)^{1-\beta_1}\right)
    \end{align}
    holds due to $\frac{1}{1+x}\le e^{-cx}$ for $c\le\frac{\log(1+x)}{x}$ and $x>0$. Now, we have to split the sum in (\ref{eq: timeMonopoly}) once again. Now, we need Lemma \ref{lemma: monTime} and (\ref{eq: condMonTime}).
    \begin{align}\label{eq: monTimeSum2}
        &\sum_{k=k_1(n)+1}^{k_2(n)}\P(X_2(n-1)=k+X_2(0))\cdot\frac{F_2(X_2(0)+k)}{F_1(X_1(0)+n-1-k)+F_2(X_2(0)+k)}\cdot R(k, n)\\
        &\prec\sum_{k=n^\beta+1}^{k_2(n)}\frac{1}{F_1(n)}\exp\left(-const.k^{\beta_2}(n-k)^{1-\beta_1}\right)
        \prec n^{-\beta_1}\sum_{k=n^\beta+1}^{k_2(n)}\exp\left(-const.k^{\beta_2}n^{1-\beta_1}\right)\nonumber\\
        &\le n^{-\beta_1}\int_{n^\beta}^{k_2(n)}\exp\left(-const.u^{\beta_2}n^{1-\beta_1}\right)du= n^{-\beta_1}n^\beta\int_1^{k_2(n)n^{-\beta}}\exp\left(-const.u^{\beta_2}\right)du\asymp n^{-\beta_1}n^\beta\nonumber
    \end{align}
    For the remaining summands in (\ref{eq: timeMonopoly}), a rough estimate is sufficient:
     \begin{align}\label{eq: monTimeSum3}
        &\sum_{k=k_2(n)+1}^{n-1}\P(X_2(n-1)=k+X_2(0))\cdot\frac{F_2(X_2(0)+k)}{F_1(X_1(0)+n-1-k)+F_2(X_2(0)+k)}\cdot R(k, n)\nonumber\\
        &\le\sum_{k=k_2(n)+1}^{n-1} R(k, n)\le\sum_{k=k_2(n)+1}^{n-1}R(k_2(n), n)\le (n-k_2(n))\exp\left(-const.k_2(n)^{\beta_2}n^{1-\beta_1}\right)
    \end{align}
    Obviously, the last term decays faster than any polynomial for $\beta_1-\beta_2<1$. 

    2. Let $\beta_1-\beta_2\ge1$. Since $N_{mon}\ge X_2(\infty)-X_2(0)$ on the event $sMon_1$, an appropriate asymptotic lower bound follows directly from Theorem \ref{thm: loserSuperlin}. For an upper bound, define $\beta'=\frac{\beta_2}{\beta_1-1}$. Widely analogous to part 1, we have to split the sum (\ref{eq: timeMonopoly}) in three parts again. Suppose for a moment $\beta_1-\beta_2>1$, such that $\beta'<1$. Then:
    \begin{align*}
        &\sum_{k=1}^{k_1(n)}\P(X_2(n-1)=k+X_2(0))\cdot\frac{F_2(X_2(0)+k)}{F_1(X_1(0)+n-1-k)+F_2(X_2(0)+k)}\cdot R(k, n)\nonumber\\
        &\prec \sum_{k=1}^{n-n^{\beta'}}\frac{1}{F_1(n-k)}= \sum_{k=n^{\beta'}}^{n-1}\frac{1}{F_1(k)}\asymp \left(n^{\beta'}\right)^{1-\beta_1}=n^{-\beta_2}
    \end{align*}
    In the first step, we used (\ref{eq: MonTimeMajorante}), Theorem \ref{thm: loserSuperlin} and $R(k, n)<1$. Similarly to (\ref{eq: monTimeSum2}), the second partial sum is:
     \begin{align*}
        &\sum_{k=k_1(n)+1}^{k_2(n)}\P(X_2(n-1)=k+X_2(0))\cdot\frac{F_2(X_2(0)+k)}{F_1(X_1(0)+n-1-k)+F_2(X_2(0)+k)}\cdot R(k, n)\\
        &\prec\sum_{k=n-n^{\beta'}+1}^{n-n^{\beta_2/\beta_1}}\frac{1}{F_1(n-k)}\exp\left(-const.k^{\beta_2}(n-k)^{1-\beta_1}\right)\\
        &\prec\int_{n-n^{\beta'}}^{n-n^{\beta_2/\beta_1}}(n-u)^{-\beta_1}\exp\left(-const.u^{\beta_2}(n-u)^{1-\beta_1}\right)du\\
        &=\int_{n^{\beta_2/\beta_1}}^{n^{\beta'}}u^{-\beta_1}\exp\left(-const.(n-u)^{\beta_2}u^{1-\beta_1}\right)du\prec\int_{0}^{n^{\beta'}}u^{-\beta_1}\exp\left(-const.n^{\beta_2}u^{1-\beta_1}\right)du\\
        &=\int_{0}^{1}n^{\beta'-\beta_1\beta'}u^{-\beta_1}\exp\left(-const.u^{1-\beta_1}\right)du=n^{-\beta_2}\int_1^\infty u^{\beta_1-2}\exp\left(-const.u^{\beta_1-1}\right)du
    \end{align*}
    In the second line, we used assumption (\ref{eq: condMonTime}). The remaining summands decay exponentially fast like in (\ref{eq: monTimeSum3}). For the case $\beta_1-\beta_2=1$, the proof is completely analogous with the given choice of $k_1$ and $k_2$.
\end{proof}

 Without assumption (\ref{eq: condMonTime}), Corollary \ref{cor: MonTimeSuperlin} still holds if $\asymp$ is replaced by $\succ$ due to first partial sum (\ref{eq: MonTimeSum1}). By using Lemma \ref{lemma: monTime} and the rough estimate
 \begin{align}\label{eq: MonTimeUpperBound}
     \P(X_2(n-1)=k+X_2(0))&\le \P(X_2(n-1)\ge k+X_2(0))\\
     &\asymp\P(X_2(n-1)\ge k+X_2(0), sMon_1)\le \P(X_2(\infty)\ge k+X_2(0), sMon_1)\nonumber
 \end{align}
 in the middle sum (\ref{eq: monTimeSum2}), we obtain the upper bounds
 $$\P(N_{mon}=n\,|\,sMon_1)\prec n^{2\beta-\beta_1}\quad\text{for }\beta_1\le\beta_2+1\,,$$
and
$$\P(N_{mon}=n\,|\,sMon_1)\prec n^{1-\beta_2}\quad\text{for }\beta_1\ge\beta_2+1$$
  without assumption (\ref{eq: condMonTime}). This upper bound is particularly good for large $\beta_2$.\\

Moreover, it is possible to extend  Corollary \ref{cor: MonTimeSuperlin} to $A>2$ via the exponential embedding.

\begin{corollary}
    Let $F_i(k)=\alpha_ik^{\beta_i}$ with $\beta_i>1$ and $\alpha_i>0$, $i\in[A]$. Assume $\beta_2\le \beta_j$ for all $j\ge2$ and that (\ref{eq: condMonTime}) holds for any two agent system with feedback $F_1, F_j$. Define $\beta\coloneqq {\frac{\beta_1-1}{\beta_2}}$.

\begin{enumerate}
    \item If $\beta_1\le\beta_2+1$, then we have
    $$\P(N_{mon}>n\,|\,sMon_1)\asymp n^{\beta-\beta_1+1}\quad\text{for }n\to\infty\,.$$
     \item If $\beta_1\ge\beta_2+1$, then we have
    $$\P(N_{mon}>n\,|\,sMon_1)\asymp n^{-\beta_2+1}\quad\text{for }n\to\infty\,.$$
\end{enumerate}
\end{corollary}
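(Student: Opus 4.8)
The plan is to reduce the statement to the two-agent result of Corollary~\ref{cor: MonTimeSuperlin} via the exponential embedding, comparing the full $A$-agent process with the two-agent sub-processes obtained by retaining only agents $1$ and $j$ for each $j\ge 2$. Throughout one works on $sMon_1=\{T_1<\min_{j\ge 2}T_j\}$, so that agent $1$ is the monopolist in the full system and in every subsystem $\{1,j\}$; write $sMon_1^{(1,j)}=\{T_1<T_j\}$ and $N_{mon}^{(1,j)}$ for the corresponding monopoly time. Since the birth processes $\Xi_i$ are independent, deleting the agents $i\ne 1,j$ changes neither $\Xi_1$ nor $\Xi_j$, hence the final wealth $X_j(\infty)=\Xi_j(T_1)$ is the same in the full system and in the subsystem $\{1,j\}$. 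Counting discrete steps up to real time $t$ as $\sum_i(\Xi_i(t)-X_i(0))$ and letting $t^{*}$ be the real time of the last loser step, won by loser $j^{*}$, one obtains on $sMon_1$ the identity $N_{mon}=N_{mon}^{(1,j^{*})}+\sum_{i\ne 1,j^{*}}(X_i(\infty)-X_i(0))$. Since $\Xi_1$ is nondecreasing and every loser finishes no later than $j^{*}$, and since $X_i(\infty)-X_i(0)\le N_{mon}^{(1,i)}$, this yields the sandwich
\[
N_{mon}^{(1,2)}\;\le\;N_{mon}\;\le\;\sum_{j=2}^{A}N_{mon}^{(1,j)}\;\le\;(A-1)\max_{j\ge 2}N_{mon}^{(1,j)}\qquad\text{on }sMon_1.
\]

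For the upper bound, $sMon_1\subseteq sMon_1^{(1,j)}$ gives $\P\big(N_{mon}^{(1,j)}>m\mid sMon_1\big)\prec\P\big(N_{mon}^{(1,j)}>m\mid sMon_1^{(1,j)}\big)$, and summing the point asymptotics of Corollary~\ref{cor: MonTimeSuperlin} over $m>n$ — legitimate because those exponents are $<-1$ when $\beta_j>1$ — the right-hand side is $\asymp m^{\gamma_j+1}$, where $\gamma_j=\tfrac{\beta_1-1}{\beta_j}-\beta_1$ if $\beta_1\le\beta_j+1$ and $\gamma_j=-\beta_j$ if $\beta_1\ge\beta_j+1$. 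A union bound over $j$ then gives $\P(N_{mon}>n\mid sMon_1)\prec n^{\,\gamma+1}$ with $\gamma:=\max_{j\ge 2}\gamma_j$. For the matching lower bound one uses $N_{mon}\ge N_{mon}^{(1,2)}$ on $sMon_1$; here the conditioning $sMon_1$ is strictly smaller than $sMon_1^{(1,2)}$, so one localizes the winner's explosion time, exploiting that $\{T_1<\min_{i\ge 3}T_i\}$ has probability bounded below on $\{T_1\le t_0\}$ and that the dominant contribution in the proof of Corollary~\ref{cor: MonTimeSuperlin} (and of Corollary~\ref{cor: birthMain}) survives the extra restriction $\{T_1\le t_0\}$ up to a constant, giving $\P(N_{mon}>n\mid sMon_1)\succ n^{\,\gamma+1}$.

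It remains to identify $\gamma$. Among $j\ge 2$ the exponent $\gamma_j$ is monotone decreasing in the loser exponent $\beta_j$ (a slower-growing loser produces a heavier $N_{mon}$-tail), so $\gamma$ is attained, possibly with ties, at $j=2$, where $\beta_j=\beta_2$ is smallest. A short case distinction then gives $\gamma=\beta-\beta_1$ when $\beta_1\le\beta_2+1$ and $\gamma=-\beta_2$ when $\beta_1\ge\beta_2+1$, so $\gamma+1$ equals $\beta-\beta_1+1$ resp.\ $1-\beta_2$, which is the assertion. The main obstacle is the lower bound in the regime $\beta_1\le\beta_2+1$: one has to revisit the first partial sum~\eqref{eq: MonTimeSum1} in the proof of Corollary~\ref{cor: MonTimeSuperlin} — whose $\asymp n^{\beta-\beta_1}$ contribution concentrates on configurations with $X_2(\infty)=O(n^{\beta})$ — and verify that this contribution persists after restricting the explosion time of the winner to a fixed bounded interval; the upper bound and the combinatorial sandwich identity in the exponential embedding are comparatively routine.
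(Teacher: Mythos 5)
Your proposal is correct and follows essentially the same route as the paper: both reduce to the two-agent case of Corollary~\ref{cor: MonTimeSuperlin} via the exponential embedding, write $N_{mon}$ on $sMon_1$ as the two-agent monopoly time of the pair $\{1,j^*\}$ (equivalently $\Xi_1$ evaluated at the last loser jump) plus the remaining losers' gains, and then use a union bound over $j$ for the upper bound (with $j=2$ dominating) and the comparison $N_{mon}\ge N_{mon}^{(1,2)}$ for the lower bound. If anything, you are more explicit than the paper about the one genuine subtlety — that the lower bound conditions on $sMon_1$ rather than on $\{T_1<T_2\}$, so the dominant contribution must be shown to survive the extra restriction $\{T_1<\min_{i\ge3}T_i\}$ — which the paper dispatches with the phrase ``via canonical coupling.''
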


\begin{proof}
    First, a lower bound follows from Corollary \ref{cor: MonTimeSuperlin} via canonical coupling. In the exponential embedding, denote by $s_i\ge0$, $i\ge 2$ the time of the last jump of $\Xi_i$ before the explosion time $T_1$ of agent 1. Then, $N_{mon}$ can be expressed as
    $$N_{mon}=\Xi_1\left(\max_{i=2,\ldots, A}s_i\right)+X_2(\infty)+\ldots X_A(\infty)+1$$
    on the event $sMon_1$. Hence:
    \begin{align*}
        &\P(N_{mon}>n\,|\,sMon_1)\le \frac{1}{\P(sMon_1)}\P\left(\sum_{i=2}^A (X_i(\infty)+\Xi_1(s_i))\ge n,\, sMon_1\right)\\
        &\le \frac{1}{\P(sMon_1)}\sum_{i=2}^A\P\left( (X_i(\infty)+\Xi_1(s_i))\ge \frac{n}{A-1},\, T_i>T_1\right)
    \end{align*}
    These probabilities are covered for $n\to\infty$ by Corollary \ref{cor: MonTimeSuperlin} as they correspond to the probability of $N_{mon}>\frac{n}{A+1}$ in a two agent system. Note that the summand $i=2$ dominates the others due to the assumption  $\beta_2\le \beta_j$.
\end{proof}

Hence, in large systems only the loser with the weakest feedback determines the tail of $N_{mon}$.

Note that the tail weight of the time on monopoly does also depend on the feedback of the winner, as opposed to the wealth of the loser discussed in Theorem \ref{thm: loserSuperlin}. As a consequence of Corollary \ref{cor: MonTimeSuperlin}, the tail of $N_{mon}$ (without conditioning on a winner) is given by $\P(N_{mon}=n)\asymp n^{\beta-\beta_1}$, where w.l.o.g. $\beta_1\le\beta_2$ and $A=2$. For the symmetric case $\beta_1=\beta_2$, this is consistent with \cite{Zhu, Cotar}. According to  Corollary \ref{cor: MonTimeSuperlin}, the (unconditioned) tail of $N_{mon}$ is heavier than the one of $\min\{X_1(\infty), X_2(\infty)\}$ in any case. In the conditioned situation, if the feedback of the winner is much stronger than the one of the loser, then $N_{mon}$ is of the same order as the wealth of the loser, i.e. the loser basically only wins steps at the beginning of the process. If the feedback of the loser is at most slightly stronger, then  $N_{mon}$ is of higher order than the wealth of the loser and, hence, the loser might win some late steps when the advantage of the winner is already large. Let us formalize this idea.

\begin{corollary}\label{cor: ShareMonTime}
      Let $A=2$ and $F_i(k)=\alpha_ik^{\beta_i}$ with $\beta_i>1$ and $\alpha_i>0$. Assume that (\ref{eq: condMonTime}) holds. Then:
      \begin{enumerate}
          \item If $\beta_1<\beta_2+1$, then
          $$\forall \epsilon>0\colon \lim_{n\to\infty}\P(\chi_2(N_{mon})<\epsilon\,|\, N_{mon}=n, sMon_1)=1\,.$$
            \item If $\beta_1=\beta_2+1$, then
          $$\forall \delta>0\ \exists\epsilon>0\colon \limsup_{n\to\infty}\P(\chi_2(N_{mon})<\epsilon\,|\, N_{mon}=n, sMon_1)<\delta$$ 
          and
          $$\forall \epsilon>0\colon \liminf_{n\to\infty}\P(\chi_2(N_{mon})>1-\epsilon\,|\, N_{mon}=n, sMon_1)>0\,.$$
          \item If $\beta_1>\beta_2+1$, then
           $$\forall \epsilon>0\colon \lim_{n\to\infty}\P(\chi_2(N_{mon})>1-\epsilon\,|\, N_{mon}=n, sMon_1)=1\,.$$
      \end{enumerate}
\end{corollary}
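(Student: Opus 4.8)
\noindent The plan is to work entirely with the decomposition from the proof of Corollary \ref{cor: MonTimeSuperlin}. Write $\pi_n(k)$ for the $k$-th summand in \eqref{eq: timeMonopoly}, so that $\pi_n(k)=\P\bigl(N_{mon}=n+1,\ sMon_1,\ X_2(\infty)=X_2(0)+k+1\bigr)$ and $\sum_{k=1}^{n-1}\pi_n(k)=\P(N_{mon}=n+1,sMon_1)\asymp n^{-\beta_2}$ by Corollary \ref{cor: MonTimeSuperlin} (recall $\beta=\tfrac{\beta_1-1}{\beta_2}$, so the two exponents in its two cases agree on $\beta_1=\beta_2+1$, where $\beta=1$). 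On $\{N_{mon}=n+1,\,sMon_1\}$ step $n$ is the last step won by agent $2$, hence $X_2(n+1)=X_2(\infty)$ and $\chi_2(N_{mon})=\tfrac{X_2(\infty)}{X_1(0)+X_2(0)+n+1}$; writing $X_2(\infty)=X_2(0)+k+1$ this gives $\{\chi_2(N_{mon})<\epsilon\}=\{k<\epsilon n+O(1)\}$ and $\{\chi_2(N_{mon})>1-\epsilon\}=\{k>(1-\epsilon)n+O(1)\}$. Thus all three statements become assertions about where the conditional mass function $k\mapsto\pi_n(k)/\sum_{k'}\pi_n(k')$ concentrates. The tools I would use are: $F_2(X_2(0)+k)\asymp 1/\P(X_2(\infty)=X_2(0)+k)$ for $k\to\infty$ (Theorem \ref{thm: loserSuperlin}); the bound $\P(X_2(n-1)=X_2(0)+k)\le R(k-1,n-1)^{-1}\,\P(X_2(\infty)=X_2(0)+k)$ from \eqref{eq: MonTimeMajorante}; and the two-sided estimates $\exp(-c\,k^{\beta_2}(n-k)^{1-\beta_1})\le R(k,n)\le\exp(-c'\,k^{\beta_2}(n-k)^{1-\beta_1})$ used in \eqref{eq: monTimeSum2}–\eqref{eq: monTimeSum3} (the right-hand one valid when $F_2(X_2(0)+k)/F_1(X_1(0)+n-1-k)$ stays bounded). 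The decisive point is that for $k\asymp\theta n$, $n-k\asymp\theta' n$ one has $k^{\beta_2}(n-k)^{1-\beta_1}\asymp\theta^{\beta_2}\theta'^{\,1-\beta_1}\,n^{\beta_2+1-\beta_1}$, which diverges, is bounded, or vanishes according as $\beta_1<\beta_2+1$, $\beta_1=\beta_2+1$, or $\beta_1>\beta_2+1$.

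For statement 1 ($\beta_1<\beta_2+1$) I would show $\sum_{k>\epsilon n}\pi_n(k)$ is super-polynomially small. Indeed, following \eqref{eq: monTimeSum2}–\eqref{eq: monTimeSum3}: for $\epsilon n<k\le k_2(n)$ (where $n-k\asymp n$ whenever this range is nonempty) the upper bound on $R(k,n)$ and $\beta_2+1-\beta_1>0$ give $R(k,n)\le\exp(-c\,\epsilon^{\beta_2}n^{\beta_2+1-\beta_1})$, and for $k>k_2(n)$ monotonicity of $R$ in its first argument together with the defining property of $k_2(n)$ gives $\pi_n(k)\le R(k_2(n),n)\le\exp(-c\,n^{\delta})$ for some $\delta>0$; dividing by $\sum_{k}\pi_n(k)\asymp n^{\beta-\beta_1}$ yields $\P(\chi_2(N_{mon})>\epsilon\mid N_{mon}=n,sMon_1)\to0$. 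For statement 3 ($\beta_1>\beta_2+1$) I would instead estimate $\sum_{k\le(1-\epsilon)n}\pi_n(k)$: here $n-1-k\ge\epsilon n\asymp n$, so $R(k-1,n-1)\ge\exp(-c\,n^{\beta_2+1-\beta_1})\to1$, whence $\P(X_2(n-1)=X_2(0)+k)\prec\P(X_2(\infty)=X_2(0)+k)$ uniformly; combining this with $\tfrac{F_2}{F_1+F_2}\le\tfrac{F_2(X_2(0)+k)}{F_1(X_1(0)+n-1-k)}$ and $R(k,n)\le1$ gives $\pi_n(k)\prec 1/F_1(X_1(0)+n-1-k)$ for $k$ large (the finitely many small $k$ contribute $O(n^{-\beta_1})$), so $\sum_{k\le(1-\epsilon)n}\pi_n(k)\prec\sum_{j\ge\epsilon n}1/F_1(X_1(0)+j)\asymp(\epsilon n)^{1-\beta_1}=o(n^{-\beta_2})$ since $1-\beta_1<-\beta_2$; dividing by $\sum_k\pi_n(k)\asymp n^{-\beta_2}$ gives $\P(\chi_2(N_{mon})\le1-\epsilon\mid N_{mon}=n,sMon_1)\to0$.

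For the transition case $\beta_1=\beta_2+1$, part (a) is the same computation as statement 3 but now only yielding smallness for small $\epsilon$: for $k\le\epsilon n$ one has $R(k-1,n-1)\ge\exp(-c\,\epsilon^{\beta_2}(1-\epsilon)^{1-\beta_1})=:e^{-K(\epsilon)}$, a constant independent of $n$ precisely because $\beta_2+1-\beta_1=0$, so $\P(X_2(n-1)=X_2(0)+k)\,F_2(X_2(0)+k)\le c\,e^{K(\epsilon)}$; bounding $1/F_1(X_1(0)+n-1-k)\le 1/F_1(X_1(0)+(1-\epsilon)n)\asymp((1-\epsilon)n)^{-\beta_1}$ and summing the $\epsilon n$ terms gives $\sum_{k\le\epsilon n}\pi_n(k)\le C(\epsilon)\,n^{-\beta_2}$ with $C(\epsilon)=c\,\epsilon\,e^{K(\epsilon)}/(1-\epsilon)^{\beta_1}\to0$ as $\epsilon\to0$, which after dividing by $\sum_k\pi_n(k)\asymp n^{-\beta_2}$ proves (a). For part (b) I would produce a matching lower bound on the right end: for $k$ in a window $\bigl((1-\epsilon)n,(1-\tfrac\epsilon2)n\bigr)$ one has $n-1-k\asymp\epsilon n$, so $\tfrac{F_2(X_2(0)+k)}{F_1(X_1(0)+n-1-k)+F_2(X_2(0)+k)}\asymp\tfrac{F_2(X_2(0)+k)}{F_1(X_1(0)+n-1-k)}\asymp\epsilon^{-\beta_1}n^{-1}$ and, again using $\beta_2+1-\beta_1=0$, $R(k,n)\ge\exp(-c\,\epsilon^{1-\beta_1})=:c_R(\epsilon)>0$ independent of $n$; together with the key lower bound $\P(X_2(n-1)=X_2(0)+k)\succ 1/F_2(X_2(0)+k)\asymp n^{-\beta_2}$ this gives $\pi_n(k)\succ\mathrm{const}(\epsilon)\,n^{-\beta_2-1}$, and summing over the $\asymp\epsilon n/2$ admissible $k$ yields $\sum_{k>(1-\epsilon)n+O(1)}\pi_n(k)\succ\mathrm{const}(\epsilon)\,n^{-\beta_2}\asymp\P(N_{mon}=n+1,sMon_1)$, hence $\liminf_n\P(\chi_2(N_{mon})>1-\epsilon\mid N_{mon}=n,sMon_1)>0$; larger $\epsilon$ then follow by monotonicity in $\epsilon$.

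The main obstacle is the uniform lower bound $\P(X_2(n-1)=X_2(0)+k)\succ 1/F_2(X_2(0)+k)$ over all $k$ with $k\to\infty$ and $n-1-k\to\infty$ — a finite-horizon version of Theorem \ref{thm: loserSuperlin}. I would obtain it from $\P(X_2(n-1)=X_2(0)+k)\ge\P\bigl(X_2(\infty)=X_2(0)+k,\ N_{mon}\le n\bigr)=\P(X_2(\infty)=X_2(0)+k)\,\P\bigl(N_{mon}\le n\mid X_2(\infty)=X_2(0)+k\bigr)$ — using that on $\{X_2(\infty)=X_2(0)+k\}\subset sMon_1$ the event $\{N_{mon}\le n\}$ equals $\{X_2(n-1)=X_2(\infty)\}$ — and the fact that there $N_{mon}-k=\Xi_1(s_2)-X_1(0)+1$, where $s_2$ is the time $\Xi_2$ reaches its final level $X_2(0)+k$ before exploding; conditionally on $X_2(\infty)=X_2(0)+k$ the rescaled variable $\Xi_1(s_2)/k$ converges to a nondegenerate law on $(0,\infty)$ with full support (a Pakes-type statement, cf.\ \cite{pakes}, combined with $k^{\beta_2}\tau_2(X_2(0)+k)\Rightarrow\mathrm{Exp}(\alpha_2)$ and $\beta_2/(\beta_1-1)=1$), so $\P(N_{mon}\le n\mid X_2(\infty)=X_2(0)+k)=\P\bigl(\Xi_1(s_2)\le n-k+O(1)\bigr\mid\cdot\bigr)$ is bounded away from $0$ for $k\asymp(1-\epsilon)n$ and $n$ large. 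Making this heuristic into a clean bound, and keeping every $\asymp$-, $\prec$- and $o(\cdot)$-estimate uniform over the relevant $k$-ranges (this is also where assumption \eqref{eq: condMonTime} and Lemma \ref{lemma: monTime} enter, via $\P(N_{mon}=n+1,sMon_1)\asymp n^{-\beta_2}$), is the technical heart of the proof.
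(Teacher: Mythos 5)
Your proposal follows essentially the same route as the paper: both proofs read off the conditional law of $\chi_2(N_{mon})$ from where the mass of the decomposition \eqref{eq: timeMonopoly} sits in $k$, using the two-sided bounds on $R(k,n)$ from the proof of Corollary \ref{cor: MonTimeSuperlin} and the sign of $\beta_2+1-\beta_1$ to locate it; your treatment of parts 1, 3 and 2(a) is correct and considerably more detailed than the paper's one-paragraph argument. The only incomplete step is the one you flag yourself: part 2(b) requires a lower bound on $\P(X_2(n-1)=X_2(0)+k)$, hence on the summands $\pi_n(k)$, for $k\asymp(1-\epsilon)n$, and the paper does not supply this either --- it merely asserts that the summands $k\geq\epsilon n$ are of the same order as the whole sum, which (via the upper bound on $\sum_{k\leq\epsilon n}\pi_n(k)$ and the two-sided asymptotics of the total) controls the left end but not the right-end event $\{k>(1-\epsilon)n\}$ that 2(b) actually concerns. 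Your Pakes-type sketch for that uniform lower bound therefore addresses a gap that is present in the paper's own proof rather than one you introduced.
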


\begin{proof}
    Recall the proof of Corollary \ref{cor: MonTime}. In particular, we showed that the sum (\ref{eq: timeMonopoly}) is dominated by the summands $k=1,\ldots, k_2(n)$, where $k$ represents the number of steps won by agent 1, when $N_{mon}=n$ and $sMon_1$. If $\beta_1<\beta_2+1$, then the summands $k=1,\ldots, \epsilon n$ dominate the whole sum for any $\epsilon>0$, which implies 1.. In contrast, if $\beta_1=\beta_2+1$, then the summands $k=\epsilon n,\ldots, n$ are of the same order as the whole sum and for any sequence $(l_n)_n$ with $l_n/n\to 0$ the summands $k=l_n,\ldots, n$ dominate the whole sum. Hence, 2. holds. Finally for $\beta_1>\beta_2+1$, the summands $k=1,\ldots, (1-\epsilon)n$ are of order $n^{1-\beta_1}<n^{-\beta_2}$, such that 3. follows.
\end{proof}

In other words, if the monopoly sets in late, then this typically happens in two different ways depending on the feedback functions: First, if the feedback of the loser is weak compared to the winner, then the loser wins a significant share of steps until the monopoly suddenly set in. For $\beta_1>\beta_2+1$, the loser's share at time $N_{mon}$ is even close to one, if the monopoly sets in late. Second, if the feedback of the loser is strong enough and the monopoly sets in late, then the loser can win steps at late time, when the winner was already dominant.\\

  In the situation of 1. in Corollary \ref{cor: MonTimeSuperlin}, the tail of $N_{mon}$ is heavier the weaker the feedback of both agents. This appears intuitive for the winner, but also weak feedback of the loser increases the probability of a late occurrence of monopoly. This surprising fact corresponds to the loser-paradox explained in Section \ref{sec: applicatonPolya}. The difference between the tail weight of the loser's wealth and of $N_{mon}$ is largest for strong feedback of the loser, since the tail of $X_2(\infty)$ is lighter. These findings are underlined by Figure \ref{figure: LoserSimulation} (a). Note that Corollary \ref{cor: MonTimeSuperlin} is consistent at the transition point $\beta_1=\beta_2+1$. This transition does also occur in the context of total monopoly as discussed in \cite{wir}.\\

For symmetric, exponentially increasing feedback, the tail of $N_{mon}$ decays exponentially as shown in \cite{Cotar}. Let us finally take a quick look at mixed feedback, say $F_1(k)=k^{\beta_1}$ and $F_2 (k)=e^{\beta_2 k}$ with $\beta_1>1$ and $\beta_2>0$. Of course, we still have the trivial lower bounds
$$\P(N_{mon}=n\,|\,sMon_2)\succ n^{-\beta_1}\quad\text{and}\quad\P(N_{mon}=n\,|\,sMon_1)\succ e^{-\beta_2 n}  $$
due to Theorem \ref{thm: loserSuperlin}. Extending 2. of Corollary \ref{cor: MonTimeSuperlin} to the limit $\beta_2\to\infty$, it stands to reason that 
$$\P(N_{mon}=n\,|\,sMon_2)\asymp n^{-\beta_1}\,.$$
The case $sMon_1$ behaves rather surprisingly.

\begin{proposition}
    Let $F_1(k)=k^{\beta_1}$ and $F_2 (k)=e^{\beta_2 k}$ with $\beta_1>1$ and $\beta_2>0$. Then:
    $$\P(N_{mon}=n\,|\,sMon_1)\asymp n^{-\beta_1}\log(n)$$
\end{proposition}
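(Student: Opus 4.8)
The plan is to rerun the proof of Corollary~\ref{cor: MonTimeSuperlin} with $F_1(k)=k^{\beta_1}$, $F_2(k)=e^{\beta_2 k}$, keeping the exact decomposition (\ref{eq: timeMonopoly})--(\ref{eq: R(k,n)}); the only genuinely new feature is a logarithmic pile-up. I condition on $sMon_1$, so the polynomial agent $1$ is the monopolist and the exponential agent $2$ is the loser with $X_2(\infty)<\infty$. Since $\P(sMon_1)\in(0,1)$ it suffices to show $\P(N_{mon}=n+1,\,sMon_1)\asymp n^{-\beta_1}\log n$ and then reindex via $(n+1)^{-\beta_1}\log(n+1)\asymp n^{-\beta_1}\log n$. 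Write $\mathrm{term}(k,n)$ for the $k$-th summand of (\ref{eq: timeMonopoly}) and put $K(n):=\frac{\beta_1-1}{\beta_2}\log n$. The heart of the matter is that on $1\le k\le K(n)$ the summand is \emph{flat} in $k$: the exponential decay $\P(X_2(n-1)=X_2(0)+k)\asymp e^{-\beta_2 k}$ of the loser's mass is exactly undone by the growth $\asymp e^{\beta_2 k}n^{-\beta_1}$ of the one-step probability, so $\mathrm{term}(k,n)\asymp n^{-\beta_1}$ uniformly there, and there are $\asymp\log n$ such $k$.

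First I would record elementary factor bounds, valid for $1\le k\le n-1$. From $F_2(X_2(0)+k)\asymp e^{\beta_2 k}$ and $F_1(X_1(0)+n-1-k)\asymp (n-k)^{\beta_1}$ one gets $\dfrac{F_2(X_2(0)+k)}{F_1(X_1(0)+n-1-k)+F_2(X_2(0)+k)}\lesssim e^{\beta_2 k}n^{-\beta_1}$ (and $\asymp e^{\beta_2 k}n^{-\beta_1}$ once $k\le K(n)$, since then $e^{\beta_2 k}\ll n^{\beta_1}$). Using $e^{-x}\le\frac1{1+x}\le e^{-cx}$ on bounded sets, $\sum_{l\ge m}l^{-\beta_1}\asymp m^{1-\beta_1}$ uniformly in $m\ge1$, and truncating the product in (\ref{eq: R(k,n)}) at $l\le 2(n-k)$ for the lower bound, one obtains $R(k,n)\asymp\exp\big(-\Theta(1)\,e^{\beta_2 k}(n-k)^{1-\beta_1}\big)$; since $(n-k)^{1-\beta_1}\ge n^{1-\beta_1}$ this is $\asymp1$ for $k\le K(n)+O(1)$ and always $\le\exp(-\Theta(1)\,e^{\beta_2 k}n^{1-\beta_1})$. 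Combining with $\P(X_2(n-1)=X_2(0)+k)\le\P(X_2(\infty)\ge X_2(0)+k)\asymp e^{-\beta_2 k}$ (Theorem~\ref{thm: loserSuperlin}, with $\min_{j\ne2}T_j=T_1$) gives $\mathrm{term}(k,n)\lesssim n^{-\beta_1}\exp(-\Theta(1)\,e^{\beta_2 k}n^{1-\beta_1})$ for all $k$; summing and using $e^{\beta_2 K(n)}n^{1-\beta_1}=1$ yields the upper bound $\P(N_{mon}=n+1,sMon_1)\lesssim n^{-\beta_1}\big(K(n)+\sum_{j\ge1}e^{-\Theta(1)e^{\beta_2 j}}\big)\asymp n^{-\beta_1}\log n$.

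For the matching lower bound I need $\P(X_2(n-1)=X_2(0)+k)\gtrsim e^{-\beta_2 k}$, uniformly for $1\le k\le K(n)$ — this is the analogue of hypothesis (\ref{eq: condMonTime}) in Corollary~\ref{cor: MonTimeSuperlin}, and here it is provable because $k$ stays of logarithmic order. In the exponential embedding, with $T_i(m)$ the time of the $m$-th jump of $\Xi_i$, one has the exact identities $\{X_2(n-1)=X_2(0)+k\}=\{T_1(n-1-k)<T_2(k+1)\}\cap\{T_2(k)<T_1(n-k)\}$ and $\{X_2(\infty)=X_2(0)+k\}=\{T_2(k)\le T_1<T_2(k+1)\}$. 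Setting $D:=T_1(n-1-k)-T_2(k)$, these become $\{-\tau_1(X_1(0)+n-k-1)<D<\tau_2(X_2(0)+k)\}$ and $\{-r_n\le D<\tau_2(X_2(0)+k)-r_n\}$ with $r_n:=T_1-T_1(n-1-k)$, where $r_n$ and $\tau_1(X_1(0)+n-k-1)$ are independent of $D$ (disjoint $\tau$-indices) and $0\le\tau_1(X_1(0)+n-k-1)\le r_n$. Since $\beta_1>1$, $\E r_n=\sum_{l\ge X_1(0)+n-k-1}l^{-\beta_1}\asymp(n-k)^{1-\beta_1}\to0$ uniformly for $k\le K(n)$, and $T_1$ — hence $D$ — has a bounded density (Lemma~\ref{lemma: gProperties}), so the distribution function of $D$ is Lipschitz with a constant independent of $n,k$. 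Conditioning on $r_n,\tau_1,\tau_2$ and comparing the two events through this Lipschitz bound and the shift $r_n$ gives $\big|\P(X_2(n-1)=X_2(0)+k)-\P(X_2(\infty)=X_2(0)+k)\big|\lesssim\E r_n\asymp n^{1-\beta_1}$; dividing by $\P(X_2(\infty)=X_2(0)+k)\asymp e^{-\beta_2 k}$ (Theorem~\ref{thm: loserSuperlin}), the relative error is $\lesssim n^{1-\beta_1}e^{\beta_2 k}\le e^{-\beta_2 C}<\tfrac12$ for $k\le K(n)-C$ with $C$ fixed large. Hence $\P(X_2(n-1)=X_2(0)+k)\asymp e^{-\beta_2 k}$ uniformly for $1\le k\le K(n)-C$.

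Putting it together, $\mathrm{term}(k,n)\asymp e^{-\beta_2 k}\cdot e^{\beta_2 k}n^{-\beta_1}\cdot1=n^{-\beta_1}$ for $1\le k\le K(n)-C$, so $\P(N_{mon}=n+1,sMon_1)\ge\sum_{k\le K(n)-C}\mathrm{term}(k,n)\asymp(K(n)-C)\,n^{-\beta_1}\asymp n^{-\beta_1}\log n$, which with the upper bound gives $\P(N_{mon}=n+1,sMon_1)\asymp n^{-\beta_1}\log n$; dividing by $\P(sMon_1)$ and reindexing yields the proposition. The main point requiring care is the uniform comparison of $\P(X_2(n-1)=X_2(0)+k)$ with $\P(X_2(\infty)=X_2(0)+k)$ over $k$ up to $\asymp\log n$, i.e.\ the role played by hypothesis (\ref{eq: condMonTime}) in the general polynomial case; what makes it go through here is precisely that the dominant range of $k$ is only logarithmic, so $n-k\to\infty$ and the remainder $r_n$ is uniformly negligible. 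The remaining estimates are routine and parallel those in Corollary~\ref{cor: MonTimeSuperlin}.
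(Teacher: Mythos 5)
Your proposal is correct and follows the paper's overall strategy: the same decomposition \eqref{eq: timeMonopoly}, the same identification of the flat range $k\lesssim\frac{\beta_1-1}{\beta_2}\log n$ (the paper's $k_1(n)=\log(n^{\beta_1-1})/\beta_2$ is your $K(n)$) where each summand is $\asymp n^{-\beta_1}$, and the same exponential suppression beyond it. Where you genuinely diverge is the lower bound on $\P(X_2(n-1)=X_2(0)+k)$ uniformly over $k$ up to $K(n)-C$. The paper handles this by converting the sum to an integral and invoking dominated convergence ``by an analogous argument as \eqref{eq: MonTimeMajorante}'', i.e.\ the same device as in Corollary \ref{cor: MonTimeSuperlin} — which there leans on the pointwise convergence of the ratio $\P(X_2(n-1)=m_n+X_2(0))/\P(X_2(\infty)=m_n+X_2(0))$ along a diverging sequence $m_n$, the very point that forces the unproven hypothesis \eqref{eq: condMonTime} in the polynomial case. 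Your coupling argument in the exponential embedding — reducing both events to interval conditions on $D=T_1(n-1-k)-T_2(k)$, whose distribution is Lipschitz, and bounding the discrepancy by $\E r_n\asymp n^{1-\beta_1}$, which is beaten by $\P(X_2(\infty)=X_2(0)+k)\asymp e^{-\beta_2 k}\geq n^{1-\beta_1}e^{\beta_2 C}$ on the relevant range — actually \emph{proves} the uniform comparison, exploiting precisely that the dominant $k$ are only logarithmic. This is more rigorous than the paper's treatment of this step and is a worthwhile addition. Two small patches are needed: (i) your blanket bound $R(k,n)\le\exp(-\Theta(1)e^{\beta_2 k}(n-k)^{1-\beta_1})$ degenerates for very large $k$, where the arguments $x_l=F_2(k+1)/F_1(\cdot)$ of the individual factors are unbounded and $\log(1+x)/x\to0$; as in \eqref{eq: monTimeSum3} one should cut at $k_2(n)\asymp\beta_1\log(n)/\beta_2$ and use monotonicity of $R(\cdot,n)$ (or the crude bound by its first factor) for the remaining at most $n$ terms, which decay super-polynomially. (ii) ``$T_1$ has a bounded density, hence $D$ does'' should be phrased for the first-passage time $T_1(n-1-k)$: its density is bounded by $F_1(X_1(0))$ uniformly in $n,k$ by the convolution formula in the proof of Lemma \ref{lemma: gProperties}, and this bound survives convolution with $-T_2(k)$. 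Neither point affects the validity of the argument.
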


\begin{proof}
    Analogously to the proof of Corollary \ref{cor: MonTime}, we consider the sum (\ref{eq: timeMonopoly}) and define $k_1(n)=\log(n^{\beta_1-1})/\beta_2$ and $k_2(n)=\log(n^{\beta_1})/\beta_2$. For the first partial sum, we use that $R(k, n)$ (defined mutatis mutandis to (\ref{eq: R(k,n)})) is bounded from below.
    \begin{align*}
        &\sum_{k=1}^{k_1(n)}\P(X_2(n-1)=k+X_2(0))\cdot\frac{F_2(X_1(0)+k)}{F_1(X_2(0)+n-1-k)+F_2(X_1(0)+k)}\cdot R(k, n)\nonumber\\
        &\asymp \sum_{k=1}^{\log(n^{\beta_1-1})/\beta_2}\frac{1}{(n-k)^{\beta_1}}\sim n^{-\beta_1}\log(n^{\beta_1-1})/\beta_2\quad\text{for }n\to\infty
    \end{align*}
    The dominated convergence theorem is applicable by an analogous argument as (\ref{eq: MonTimeMajorante}). Repeating the idea of (\ref{eq: MonTimeUpperBound}), we get for the second partial sum:
    \begin{align*}
        &\sum_{k=k_1(n)}^{k_2(n)}\P(X_2(n-1)=k+X_2(0))\cdot\frac{F_2(X_1(0)+k)}{F_1(X_2(0)+n-1-k)+F_2(X_1(0)+k)}\cdot R(k, n)\nonumber\\
        &\prec \sum_{\log(n^{\beta_1-1})/\beta_2}^{\log(n^{\beta_1})/\beta_2}\frac{k}{(n-k)^{\beta_1}}e^{-const. e^{k}(n-k)^{1-\beta_1}}\asymp n^{-\beta_1}\int_{\log(n^{\beta_1-1})/\beta_2}^{\log(n^{\beta_1})/\beta_2} u e^{-const. e^{u}n^{1-\beta_1}}du\\
        &\prec n^{-\beta_1}\int_1^\infty\frac{\log(n^{\beta_1-1}u)}{u}e^{-const. u}du\asymp n^{-\beta_1}\log(n^{\beta_1-1})\quad\text{for }n\to\infty
    \end{align*}
    The remaining summands decay exponentially like in (\ref{eq: monTimeSum3}).
\end{proof}

Basically, this is consistent with the limit $\beta_1\to\infty$ in part 1. of Corollary \ref{cor: MonTimeSuperlin}, but the additional term $\log(n)$ was not predicted. In particular in the case $sMon_1$, the time of monopoly is heavy tailed, although the wealth of the loser is light tailed. Surprisingly, the tail of $N_{mon}$ is heavier on $sMon_1$ than on $sMon_2$. Overall, the (unconditioned) tail of $N_{mon}$ is of order $\log(n)$ heavier than the tail of $\min\{X_1(\infty), X_2(\infty)\}$. Figure \ref{figure: LoserSimulation} (b) illustrates this situation. On $sMon_2$, the tails of $X_1(\infty)$ and $N_{mon}$ even seem to be equal,  i.e. the loser wins most steps before time $N_{mon}$ and 3. of Corollary \ref{cor: ShareMonTime} holds analogously.

\subsection{Time of monopoly for sub-linear agents}\label{sec: timeSublin}

The time of monopoly $N_{mon}$ (see (\ref{eq: MonTimeDef})) is also well-defined when only one agents satisfies the monopoly condition (\ref{eq: explosion}), such that this agent is almost surely the strong monopolist (see Section \ref{sec: LoserSublin}). The following corollary establishes the tail behaviour of $N_{mon}$ in this situation.

\begin{corollary}\label{cor: MonTime}
Let $A=2$ and $F_i(k)=\alpha_i k^{\beta_i}$ with $\beta_1\le 1<\beta_2$ and $\alpha_i>0$. If $\beta_1=1$, assume additionally $X_2(0)>\beta_2$. Then:

\begin{align}\label{eq: MonTime}
    \P(N_{mon}>n)\sim \E F_1(X_1(\infty))\cdot\sum_{k=n}^\infty\frac{1}{F_2(k)}\quad\text{for }n\to\infty
\end{align}
\end{corollary}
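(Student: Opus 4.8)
The plan is to use the exponential embedding. With $A=2$, agent $1$ has sub-linear feedback $F_1$ (so $\Xi_1$ is non-explosive) and agent $2$ has super-linear feedback $F_2$, so almost surely $\Xi_2$ explodes at time $T_2$ and agent $2$ is the monopolist. On the event $sMon_2$ (which has probability one here), $N_{mon}$ is one plus the number of steps won by agent $1$, i.e. the total number of balls of colour $1$ added before agent $2$'s explosion. In the embedding, agent $1$ stops winning once $\Xi_2$ explodes, so $X_1(\infty)=\Xi_1(T_2)$ and
\[
N_{mon}=X_1(\infty)+X_2(\infty)-X_1(0)-X_2(0)+1\,,
\]
since every step is won by agent $1$ or $2$. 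Equivalently, $N_{mon}>n$ roughly corresponds to $\Xi_2$ having won at least $n-\Xi_1(T_2)+\text{const}$ steps by time $T_2$, i.e. $X_2(\infty)$ is large.

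**Main computation.**
First I would show that $X_1(\infty)=\Xi_1(T_2)$ has exponential moments, or at least that $\E F_1(X_1(\infty))<\infty$: by Theorem \ref{thm: loserSublin}, $\P(X_1(\infty)>x)$ decays like $\exp(-d\sum_{k=X_1(0)}^x 1/F_1(k))$ with $d=F_2(X_2(0))$, and since $F_1$ grows slower than linearly this tail is light enough (for $\beta_1<1$ it is a stretched exponential in $x^{1-\beta_1}$, for $\beta_1=1$ it is a power law $x^{-d}$ with $d=F_2(X_2(0))=\alpha_2 X_2(0)^{\beta_2}>\alpha_2\beta_2$, and one needs $\E[F_1(X_1(\infty))]=\E[\alpha_1 X_1(\infty)]<\infty$, which is why the hypothesis $X_2(0)>\beta_2$ is imposed in the $\beta_1=1$ case). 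Then condition on $\Xi_1$: given the whole path of $\Xi_1$, we have $X_1(\infty)=\Xi_1(T_2)$ determined by where $T_2$ lands, and
\[
\P(N_{mon}>n\mid \Xi_1)=\P\big(\Xi_1(T_2)+X_2(\infty)-c > n \;\big|\;\Xi_1\big),
\]
where $c=X_1(0)+X_2(0)-1$. The cleanest route is to write $N_{mon}>n$ in terms of the first passage time of $\Xi_2$: agent $2$ reaches $n$ wins at the random time $T_2(\,\cdot\,)=\sum \tau_2(k)$, and $N_{mon}>n$ is equivalent to agent $1$ still winning at step $n$, i.e. the $(n-k)$-th jump of $\Xi_1$ comes before $\Xi_2$'s jump out of level $X_2(0)+k$ for the appropriate $k$. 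I would instead condition on $(\tau_2(k))_k$ — equivalently on $\bar T_2(m):=\sum_{l>m}\tau_2(l)$ — and use that $\Xi_1$ is independent of $\Xi_2$. Writing $N_{mon}>n$ as $\{X_2(\infty)>n-X_1(\infty)+\text{const}\}$ and using the tail of $X_2(\infty)$ from Theorem \ref{thm: loserSuperlin}: conditionally on $\Xi_1$, agent $2$ has to explode after winning roughly $n-X_1(\infty)$ steps, and the relevant event is (by the exponential embedding applied to the single birth process $\Xi_2$ observed at the independent random time equal to the last jump time of $\Xi_1$ before $T_2$) governed by $\sum_{k=\,\cdot\,}^\infty 1/F_2(k)$.

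**The key asymptotic step.**
Concretely, I expect the identity
\[
\P(N_{mon}>n)=\E\Big[\,\P\big(\Xi_2 \text{ wins } \ge n-\Xi_1(T_2) \text{ steps before } T_2 \,\big|\, \Xi_1\big)\Big]
\]
to reduce, via Corollary \ref{cor: birthMain} (randomized observation time) applied to $\Xi_2$, to
\[
\P(N_{mon}>n)\sim \E\big[F_1(\Xi_1(T_2))\big]\sum_{k=n}^\infty\frac{1}{F_2(k)}\,,
\]
where the factor $F_1(\Xi_1(T_2))=F_1(X_1(\infty))$ arises because, in the embedding, agent $1$ makes one more jump at rate $F_1(X_1(\infty))$ which competes with agent $2$'s explosion — the probability agent $1$ wins at least one more step after agent $2$ has reached level $\approx n$ is asymptotically $F_1(X_1(\infty))\sum_{k\ge n}1/F_2(k)$ by the same computation as in the proof of Theorem \ref{thm: birthMain} (the geometric-like product $\prod(1-F_1/F_2)$ linearizes). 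The dominated convergence needed to pull the limit inside $\E[\cdot]$ requires a uniform bound of the form $\P(\Xi_2(s)\ge n)/[F_1(\Xi_1(s))\sum_{k\ge n}1/F_2(k)]$ bounded uniformly, which should follow from the uniform Corollary \ref{cor: uniformBound}; the integrability of the dominating function is exactly $\E[F_1(X_1(\infty))]<\infty$, established above.

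**Main obstacle.**
The hard part will be making the heuristic "$N_{mon}>n \iff X_2(\infty)>n-X_1(\infty)+\text{const}$, and the tail of this is controlled by Corollary \ref{cor: birthMain} with the extra factor $F_1(X_1(\infty))$" fully rigorous — in particular, correctly identifying the prefactor as $\E F_1(X_1(\infty))$ rather than something involving $\E g_2(\cdot)$, and justifying the interchange of limit and expectation over $\Xi_1$ uniformly in $n$. The subtlety is that $\Xi_1(T_2)$ and the event $\{$agent $1$ wins the step after $\Xi_2$ reaches level $n\}$ are not independent, so one must carefully decompose on the last jump time of $\Xi_1$ before $T_2$, and control the contribution of paths where $\Xi_1$ is still moving appreciably near time $T_2$ (these are rare because $F_1$ grows, cf. the $\lim F_1(k)=\infty$ type hypothesis). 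Handling the boundary case $\beta_1=1$ separately, where $\E X_1(\infty)<\infty$ is delicate and forces the assumption $X_2(0)>\beta_2$, is the other place requiring care.
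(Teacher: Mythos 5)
Your target asymptotic, the identification of the prefactor $\E F_1(X_1(\infty))$, and the key linearization (the probability that agent $1$ wins once more after agent $2$ has reached level $n$ is $\approx F_1(X_1(\infty))\sum_{k\ge n}1/F_2(k)$) all match the paper. But the formal route you propose has a genuine gap, and it sits exactly where you yourself locate "the hard part". You want to apply Corollary \ref{cor: birthMain} to $\Xi_2$ observed at "the last jump time of $\Xi_1$ before $T_2$". That time is \emph{not} independent of $\Xi_2$ --- it is defined through $T_2$ --- so the corollary's hypothesis fails; and even if it applied, it would produce a prefactor of the form $\E g_2(\cdot)$, which is precisely what you (correctly) say should not appear. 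The mechanism that actually produces $\E F_1(X_1(\infty))$ is not a randomized-time tail of $\Xi_2$ at all: it is the competition between agent $1$'s next exponential clock (rate $F_1(x)$, memoryless) and the residual explosion time $\bar T_2(n)$ of agent $2$ past level $n$, via $\E\big[1-e^{-F_1(x)\bar T_2(n)}\big]\sim F_1(x)\,\E \bar T_2(n)=F_1(x)\sum_{k>n}1/F_2(k)$; your proposal never supplies this step. Two further inaccuracies: the identity $N_{mon}=X_1(\infty)+X_2(\infty)-X_1(0)-X_2(0)+1$ is vacuous here because agent $2$ is a.s.\ the monopolist, so $X_2(\infty)=\infty$ (and $N_{mon}$ is the index of agent $1$'s \emph{last} win plus one, not one plus the \emph{number} of its wins); and the hypothesis $X_2(0)>\beta_2$ is not there to make $\E F_1(X_1(\infty))<\infty$ (for $\beta_1=1$ that needs only roughly $X_2(0)>1$, as the paper remarks) but to guarantee $\P(X_1(\infty)>\epsilon n)=o(n^{-\beta_2})$, which is what kills the contribution of paths on which $X_1$ is still of order $n$ in the upper bound.

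For comparison, the paper's proof stays entirely in discrete time and never needs the embedding for this corollary: it writes $\P(N_{mon}=n+1)=\sum_k\P(X_1(n-1)=X_1(0)+k)\,p_{k,n}$, where $p_{k,n}$ is the explicit product of transition probabilities for the event that agent $1$ wins step $n$ and agent $2$ wins every later step, shows $p_{k,n}\sim F_1(X_1(0)+k)/F_2(n)$ for each fixed $k$, obtains the lower bound from Fatou, and obtains the matching upper bound by splitting the sum at $k=\lfloor\epsilon n\rfloor$ and invoking Theorem \ref{thm: loserSublin} (which is where $X_2(0)>\beta_2$ enters). If you want to push your continuous-time version through, replace the appeal to Corollary \ref{cor: birthMain} by the exponential-clock computation above, and carry out the same $\epsilon n$ truncation to control both the shift $n\mapsto n-X_1(\infty)$ in the level of $\Xi_2$ and the integrability of the dominating function.
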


Note that $\E F_1(X_1(\infty))<\infty$ is always satisfied for $\beta_1<1$ and is equivalent to $X_2(0)>1$ for $\beta=1$ due to Theorem \ref{thm: loserSublin}.


\begin{proof}
    The probability of $N_{mon}=n+1$ can be expressed as
    \begin{align*}
        &\P(N_{mon}=n+1)=\sum_{k=0}^{n-1}\P\left(X_1(n-1)-X(0)=k\right)\\
        &\quad\cdot\P\left(X(n)-X(n-1)=e^{(1)},\, \forall m>n\colon X(m)-X(m-1)=e^{(2)}\,\big|\,X_1(n-1)-X(0)=k \right)
    \end{align*}
    Since for all $k\ge0$
    $$\lim_{n\to\infty}\P\left(X_1(n-1)-X(0)=k\right)=\P\left(X_1(\infty)-X(0)=k\right)$$
    and
    \begin{align}\label{eq: TimSublinAsymp}
        &\P\left(X(n)-X(n-1)=e^{(1)},\, \forall m>n\colon X(m)-X(m-1)=e^{(2)}\,\big|\,X_1(n-1)-X(0)=k \right)\nonumber\\
        &=\frac{F_1(X_1(0)+k)}{F_1(X_1(0)+k)+F_2(X_2(0)+n-k-1)}\cdot \prod_{m>n}\frac{F_2(X_1(0)-2-k+m)}{F_1(X_1(0)+k+1)+F_2(X_2(0)-k-2+m)}\nonumber\\
        &\sim \frac{F_1(X_1(0)+k)}{F_2(n)}\quad\text{for }n\to\infty
    \end{align}
    holds, we get the following asymptotic lower bound for $n\to\infty$ via Fatou's Lemma:
    \begin{align*}
        \P(N_{mon}=n+1)&\succ  \left(\sum_{k=0}^\infty\P\left(X_1(\infty)-X(0)=k\right)F_1(X_1(0)+k)\right)\frac{1}{F_2(n)}=\frac{\E F_1(X_1(\infty))}{F_2(n)}
    \end{align*}
    Due to non-uniformity of the summands, these considerations are not sufficient for an upper bound, but instead we can split the sum as follows. Take $\epsilon\in(0, 1)$, then:
    \begin{align*}
        &\P(N_{mon}=n+1)\le\P(X_1(\infty)-X_1(0)>\lfloor \epsilon n\rfloor)+\sum_{k=0}^{\lfloor \epsilon n\rfloor}\P\left(X_1(n-1)-X(0)=k\right)\\
        &\qquad\cdot\frac{F_1(X_1(0)+k)}{F_1(X_1(0)+k)+F_2(X_2(0)+n-k-1)} \prod_{m>n}\frac{F_2(X_1(0)-2-k+m)}{F_1(X_1(0)+k+1)+F_2(X_2(0)-k-2+m)}\\
        &\le o(n^{-\beta_2})+\sum_{k=0}^{\lfloor \epsilon n \rfloor}\P\left(X_1(n-1)-X(0)=k\right)\cdot \frac{F_1(X_1(0)+k)}{F_1(X_1(0)+1)+F_2(X_2(0)+n-\lfloor\epsilon n\rfloor-1)}\\
        &\le o(n^{-\beta_2})+\frac{ \E F_1(X_2(n-1))}{F_1(X_1(0)+1)+F_2(X_2(0)+n-\lfloor \epsilon n\rfloor-1)}\\
        &\sim \frac{1}{(1-\epsilon)^{\beta_2}}\cdot \frac{\E F_1(X_1(\infty))}{F_2(n)}\quad\text{for }n\to\infty
    \end{align*}
    In the second inequality, we used Theorem \ref{thm: loserSublin} and the assumption $X_2(0)>\beta_2$ in the case of $\beta_1=1$. $\epsilon\to 0$ finally yields the desired upper bound.
\end{proof}

By an analogous but more lengthy proof, one could generalize Corollary \ref{cor: MonTime} to $A>2$. Assume that there are several agents $i$ fulfilling the same assumptions as agent 1 in Corollary \ref{cor: MonTime}, but only one like agent 2. Then (\ref{eq: MonTime}) still holds when $\E F_1(X_1(\infty))$ is replaced by $\sum_i\E F_i(X_i(\infty))$. An analogous extension to exponential $F_2$ fails at step (\ref{eq: TimSublinAsymp}). Moreover for almost linear feedback, e.g. $F_1(k)=k\log(k)^{\beta_1}$, $\beta\in (0,1)$, the expectation $\E F_1(X_1(\infty))=\infty$ is infinite (see Example \ref{example: loserWealthSublin}), such that an analogous argument is not possible.\\\

As a consequence of Corollary \ref{cor: MonTime}, the time of monopoly $N_{mon}$ has a power law tail, although the tail of $X_1(\infty)$ is lighter than a power-law for $\beta_1<1$. Also for $\beta_1=1$, one can easily calculate that the tail of $N_{mon}$ is heavier than the one of $X_1(\infty)$. In contrast to $X_1(\infty)$, the tail of $N_{mon}$ does never depend on initial values (up to the constant prefactor). Remarkably in Corollary \ref{cor: MonTime}, the tail decay of $N_{mon}$ does only depend on the feedback of the winner up to a constant prefactor, as opposed to the super-linear case of Appendix \ref{sec: time}. As intuitively expected, the tail decays faster for strong feedback of the winner. All findings of this section are illustrated by Figure \ref{figure: LoserSimulation2}. Let us finally discuss an example to understand the transition between Corollary \ref{cor: MonTime} and Corollary \ref{cor: MonTimeSuperlin}.

\begin{example}
    Let $F_i(k)=k^{\beta_i}$ with $\beta_1>1$ and varying $\beta_2\in\R$. Then $\P(N_{mon}>x\,|\,sMon_1)\asymp x^{-a}$ for some exponent $a=a(\beta_2)>0$. This exponent is constant for $\beta_2\le 1$ and jumps downwards (even to zero if $\beta_1\ge2$) at $\beta_2=1$ in a left-continuous manner. For $\beta_2>1$, $a(\beta_2)$ is strictly increasing and with $\lim_{\beta_2\to\infty}a(\beta_2)=a(0)=1-\beta_1$. The tail of $N_{mon}$ coincides with the tail of $X_2(\infty)$ only for $1<\beta_2\le\beta_1-1$ (up to constant prefactors). The situation is illustrated by Figure \ref{figure: ExponentPlot}.
\end{example}

\end{document}